\let\OLDthebibliography\thebibliography
\renewcommand\thebibliography[1]{
  \OLDthebibliography{#1}
  \setlength{\parskip}{0pt}
  \setlength{\itemsep}{1pt plus 0.3ex}
}
\theoremstyle{plain}
	\newtheorem{thm}{Theorem}[section]
	\newtheorem{prop}[thm]{Proposition}
	\newtheorem{lemma}[thm]{Lemma}
	\newtheorem{prob}[thm]{Problem}
	\newtheorem{probb}{Problem}
\theoremstyle{definition}
	\newtheorem{defn}[thm]{Definition}
	\newtheorem{rem}[thm]{Remark}
\providecommand{\keywords}[1]
{
  \small	
  \textbf{\textit{Keywords---}} #1
}
\def\a{\mathfrak{a}}
\def\Ad{\text{\normalfont{Ad}}}
\def\ad{\text{\normalfont{ad}}}
\def\C{{\mathbb C}}
\def\Geg{\widetilde{C}}
\def\Diff{\text{\normalfont{Diff}}}
\def\spanned{\text{\normalfont{span}}}
\def\Weyl{\mathcal{D}}
\def\D{\mathbb{D}}
\def\End{\text{\normalfont{End}}}
\def\g{\mathfrak{g}}
\def\Harm{\mathcal{H}}
\def\Hom{\text{\normalfont{Hom}}}
\def\id{\text{\normalfont{id}}}
\def\Ind{\text{\normalfont{Ind}}}
\def\Im{\text{\normalfont{Im}}}
\def\l{\mathfrak{l}}
\def\L{\mathcal{L}}
\def\m{\mathfrak{m}}
\def\N{{\mathbb N}}
\def\n{\mathfrak{n}}
\def\p{\mathfrak{p}}
\def\Pol{\text{\normalfont{Pol}}}
\def\R{{\mathbb R}}
\def\Re{\text{\normalfont{Re}}}
\def\Rest{\text{\normalfont{Rest}}}
\def\so{\mathfrak{so}}
\def\Sol{\text{\normalfont{Sol}}}
\def\stab{\text{\normalfont{stab}}}
\def\supp{\text{\normalfont{supp }}}
\def\Symb{\text{\normalfont{Symb}}}
\def\V{\mathcal{V}}
\def\W{\mathcal{W}}
\def\Z{{\mathbb Z}}
\newcommand\arrowsimeq{\stackrel{\mathclap{\thicksim}}{\longrightarrow}}
\newcommand\arrowiota{\stackrel{\mathclap{\iota}}{\hookrightarrow}}
\setlist[enumerate]{topsep=1pt, itemsep=-2pt} 
\setlist[itemize]{topsep=1pt, itemsep=0pt}
\title{Construction and classification of differential symmetry breaking operators for principal series representations of the pair $(SO_0(4,1), SO_0(3,1))$ for special parameters}
\author{V\'{i}ctor P\'{E}REZ-VALD\'{E}S}
\date{}
\begin{document}
\maketitle
\vspace*{-1cm} 
{\centering\textit{Dedicated to Professor Toshiyuki Kobayashi for his countless and invaluable contributions to the field.}\par}
\begin{abstract}
We construct and give a complete classification of all the differential symmetry breaking operators $\mathbb{D}_{\lambda, \nu}^{N,m}: C^\infty(S^3, \mathcal{V}_\lambda^{2N+1}) \rightarrow C^\infty(S^2, \mathcal{L}_{m, \nu})$, between the spaces of smooth sections of a vector bundle of rank $2N+1$ over the $3$-sphere $\mathcal{V}_\lambda^{2N+1}  \rightarrow S^3$, and a line bundle over the $2$-sphere $\mathcal{L}_{m, \nu} \rightarrow S^2$ in the special case $|m| = N$.
\end{abstract}
\keywords{Differential symmetry breaking operator, generalized Verma module, F-method, branching laws, Gegenbauer polynomials, conformal geometry.}
\begin{flushleft}
\textbf{MSC2020:} Primary 22E45, Secondary 58J70, 22E46, 22E47, 34A30.
\end{flushleft}

\renewcommand{\contentsname}{Table of contents}
\tableofcontents

\section{Introduction}
Given a representation $\Pi$ of a Lie group $G$, and a representation $\pi$ of a Lie subgroup $G^\prime \subset G$, one can think about the $G^\prime$-intertwining operators going from the restriction $\Pi\rvert_{G^\prime}$ to $\pi$. These operators are called \textbf{symmetry breaking operators}, and their study may help us understand the nature behind the branching problems that appear in representation theory. In fact, the
problem of describing the space of these operators $\Hom_{G^\prime}(\Pi\rvert_{G^\prime}, \pi)$ corresponds to Stage C of the ABC program that T. Kobayashi proposed to address branching problems for real reductive Lie groups (\cite{kob2015}):

\begin{enumerate}[itemsep=1pt]
\item[$\bullet$] \textbf{Stage A}: Abstract features of the restriction.
\item[$\bullet$] \textbf{Stage B}: Branching laws.
\item[$\bullet$] \textbf{Stage C}: Construction of symmetry breaking operators.
\end{enumerate}

In this paper, we focus on the task of understanding these symmetry breaking operators. Concretely, we focus on the construction and classification of symmetry breaking operators that can be written as \emph{differential} operators for a concrete pair of reductive Lie groups $(G, G^\prime)$. One classical example following this line is given by the Rankin--Cohen bidifferential operators (\cite{cohen}, \cite{rankin}), which are a special case of symmetry breaking operators for the tensor product of two holomorphic discrete series representations of $SL(2,\R)$. Another example is given by the conformally covariant differential symmetry breaking operators for spherical principal series representations of the Lorentz group constructed by A. Juhl in the frame of conformal geometry (\cite{juhl}). 

One reason to think about differential symmetry breaking operators is that the space of such operators is isomorphic to the space of homomorphisms between certain generalized Verma modules, thanks to the duality theorem (see \cite[Thm. 2.9]{kob-pev1}). Thus, the problem of determining all differential symmetry breaking operators can be approached from an algebraic point of view.

Another reason is that depending on the setting, the differential operators may be a quite large class inside all symmetry breaking operators, as it can occur that no other symmetry breaking operators exist. In other words, it may happen that any symmetry breaking operator is a differential operator. This type of phenomenon is called the localess theorem (cf. \cite[Thm. 5.3]{kob-pev1}, \cite[Thm. 3.6]{kob-speh2}).

In the following, we give a specific geometric setting that we consider throughout the paper. This setting allows us to consider Stage C in a significantly wide class of cases. 

Given two smooth manifolds $X$ and $Y$, two vector bundles over them $\V \rightarrow X$ and $\W \rightarrow Y$ and a smooth map $p: Y \rightarrow X$, we can talk about the notion of a \textit{differential operator} between the spaces of smooth sections $T:C^\infty(X, \V) \rightarrow C^\infty(Y, \W)$, following the line of the famous result of J. Peetre (\cite{peetre}, \cite{peetre2}) about the characterization of differential operators in terms of their support (see Definition \ref{def-diffop}). In addition, suppose that we have a pair of Lie groups $G^\prime \subset G$ acting equivariantly on $\W \rightarrow Y$ and $\V \rightarrow X$ respectively, and that $p$ is a $G^\prime$-equivariant map between $Y$ and $X$. In this setting, we can consider the following problem:

\begin{prob}\label{prob-first} Give a description of the space of $G^\prime$-intertwining differential operators (differential symmetry breaking operators)	
$$D: C^\infty(X,\V) \rightarrow C^\infty(Y, \W).$$
\end{prob}

The setting for Problem \ref{prob-first} is very general and differential symmetry breaking operators may or may not exist. For instance, if $\W$ is  isomorphic to the pullback $p^*\V$, the restriction map $f \mapsto f\lvert_Y$ is clearly a $G^\prime$-intertwining differential operator from $C^\infty(X, \V)$ to $C^\infty(Y, \W)$. 

In the general setting where there is no morphism between $p^*\V$ and $\W$, there are cases where non-zero differential symmetry breaking operators exist, but determining when this occurs is a considerably hard task, and there are many unsolved problems. As for the ones that have been solved during the past years, for example, T. Kobayashi and B. Speh constructed and classified not only the ones that can be written as differential operators, but all the symmetry breaking operators for the tuple $(X, Y, G, G^\prime) = (S^n, S^{n-1}, O(n+1,1), O(n,1))$ and for a concrete pair of vector bundles $(\V, \W)$ (\cite{kob-speh1}, \cite{kob-speh2}).

In the case where $X = G/P \supset Y = G^\prime/P^\prime$ are flag varieties and the vector bundles $\V, \W$ are those associated to representations of the parabolic subgroups $P$ and $P^\prime$ respectively, T. Kobayashi proposed a method (\textit{the F-method} \cite{kob2013}) to construct and classify all differential symmetry breaking operators
\begin{equation*}
D: C^\infty(G/P, \V) \rightarrow C^\infty(G^\prime/P^\prime, \W).
\end{equation*}

This F-method allows us to reduce the problem of constructing differential symmetry breaking operators to the problem of finding polynomial solutions of a system of PDE's by applying the \lq\lq algebraic Fourier transform\rq\rq{} to certain generalized Verma modules, and
to use invariant theory to solve the latter. It is known (\cite[Prop. 3.10]{kob-pev1}) that if the nilradical of the parabolic subgroup $P$ is abelian, then the principal term of the equations of the system of PDE's is of order two.

In this setting, T. Kobayashi and M. Pevzner used the F-method to construct differential symmetry breaking operators for some concrete pairs of hermitian symmetric spaces $X \supset Y$ (\cite{kob-pev2}). In a slightly different context, T. Kobayashi together with B. \O rsted, P. Somberg and V. Souček used the F-method to solve a similar problem in the setting of conformal geometry. In addition, in \cite{kkp} all conformal symmetry breaking operators for differential forms on spheres were constructed and classified by T. Kobayashi, T. Kubo and M. Pevzner.

In the present paper, we consider the case $(X,Y, G, G^\prime) = (S^3, S^2, SO_0(4,1), SO_0(3,1))$ and study Problem \ref{prob-first} for a concrete pair of vector bundles $(\V, \W) = (\V_\lambda^{2N+1},\L_{m, \nu})$ (see (\ref{vectorbundle-V}) and (\ref{vectorbundle-L}) for the definition). We consider these specific bundles in order to think of differential symmetry breaking operators between \emph{any} pair of principal series representations of $SO_0(4,1)$ and $SO_0(3,1)$.

In particular, we divide Problem \ref{prob-first} into the following two problems:

\begin{probb} Give necessary and sufficient conditions on the parameters $\lambda, \nu \in \C$, $N \in \N$ and $m \in \Z$,
such that the space
\begin{equation}\label{DSBO-space}
\Diff_{SO_0(3,1)}\left(C^\infty(S^3, \V_\lambda^{2N+1}), C^\infty(S^2, \L_{m,\nu})\right)
\end{equation}
of differential symmetry breaking operators
$\D_{\lambda, \nu}^{N, m}: C^\infty(S^3, \V_\lambda^{2N+1})\rightarrow C^\infty(S^2, \L_{m,\nu})$
is non-zero. In particular, determine
\begin{equation*}
\dim_\C \Diff_{SO_0(3,1)}\left(C^\infty(S^3, \V_\lambda^{2N+1}), C^\infty(S^2, \L_{m,\nu})\right).
\end{equation*}
\end{probb}

\begin{probb} Construct explicitly the generators
 \begin{equation*}
\D_{\lambda, \nu}^{N, m} \in \Diff_{SO_0(3,1)}\left(C^\infty(S^3, \V_\lambda^{2N+1}), C^\infty(S^2, \L_{m,\nu})\right).
\end{equation*}
\end{probb}

In the case $N = m = 0$, Problems A and B above are known to be completely solved (see, for instance \cite{juhl, koss}). When $N = 1$, a solution to Problems A and B is given in \cite{kkp} for $m = 0$ (see \cite[Rem 1.5]{perez-valdes}) and in \cite{perez-valdes} for $|m| \geq 1$. Therefore, the problems above have been completely solved when $N = 0, 1$ and $m \in \Z$.

In the present paper, we consider the general case $N \in \N$ and solve Problems A and B when the parameter $m$ takes the special value $|m| = N$ (see Theorems \ref{mainthm1} and \ref{mainthm2} respectively). Moreover, although for a general $(N,m) \in \N \times \Z$, Problems A and B are still unsolved, by using the F-method we prove that, when $|m|\geq N$, they are equivalent to the problem of solving a certain system of ordinary differential equations (see Theorem \ref{Thm-findingequations} and Remark \ref{remark-Step2}). Since the nature of the appearing system is quite different when $|m| = N$ and when $|m| > N$, we solve this system when $|m| = N$ in this paper (which allows us to solve Problems A, B), and consider the case $|m| > N$ in another paper we expect to publish soon. 

At the end, we show the existence of a duality between the cases $m$ and $-m$ when $|m| \geq N$ (see Proposition \ref{prop-duality_m_and_-m}), proving that the solution to Problems A and B for $m \leq -N$ is equivalent to that for $m > N$; therefore, it suffices to solve one of the cases to obtain the whole solution.

Throughout the text, we use the following notation: $\N:= \{0, 1, 2, \ldots\}$, $\N_+:= \{1, 2, 3, \ldots\}$.

\subsection{Organization of the paper}
The paper is divided in 9 sections (including one appendix), and is structured as follows. In Section \ref{section-mainthms} we state the main results (Theorems \ref{mainthm1} and \ref{mainthm2}) and do some remarks. In Section \ref{section-Fmethod} we do a quick review of the F-method in a general setting, the machinery used to prove the main theorems. In Section \ref{section-setting} we specialize the setting to our case $(G, G^\prime, X, Y) = (SO_0(4,1), SO_0(3,1), S^3, S^2)$, giving detail in the realizations and fixing notations. Sections \ref{section-step1} and \ref{section-step2} are devoted to apply the F-method in two steps, which we call Step 1 and Step 2. By using the main result obtained in Step 2 (Theorem \ref{Thm-step2}), we prove in Section \ref{section-proof-of-main-thms} the main theorems of the paper for $m = N$. After that, we focus on proving Theorem \ref{Thm-step2}, which can be divided into two results: Theorems \ref{Thm-findingequations} and \ref{Thm-solvingequations}. These are proved in Sections \ref{section-proof_of_finding_equations} and \ref{section-proof_of_solving_equations} respectively.
Section \ref{section-case_m_lessthan_-N} shows the existence of a duality between the cases $m$ and $-m$ for $|m|\geq N$, and allows us to prove the main theorems for $m = -N$ by using the obtained results for $m = N$. The appendix at the end collects some properties of Gegenbauer polynomials we use throughout the paper, specially in Section \ref{section-proof_of_solving_equations}.

\subsection{Main results}\label{section-mainthms}
In this subsection, we present our main results, offering a complete solution to Problems A and B for $|m| = N$. We begin with the solution to Problem A.

\begin{thm}\label{mainthm1} Let $\lambda, \nu \in \C$, $N \in \N$ and $m = \pm N$. Then, the following three conditions on the quadruple $(\lambda, \nu, N, m)$ are equivalent:
\begin{enumerate}[label=\normalfont{(\roman*)}]
\item $\Diff_{SO_0(3,1)}\left(C^\infty(S^3, \V_\lambda^{2N+1}), C^\infty(S^2, \L_{m,\nu})\right) \neq \{0\}$.
\item $\dim_\C \Diff_{SO_0(3,1)}\left(C^\infty(S^3, \V_\lambda^{2N+1}), C^\infty(S^2, \L_{m,\nu})\right) = 1$.
\item $\lambda-\nu \in \N$.
\end{enumerate}
\end{thm}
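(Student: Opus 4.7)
The strategy is to apply the F-method (Section \ref{section-Fmethod}): by the Kobayashi--Pevzner duality theorem, the space $\Diff_{SO_0(3,1)}\left(C^\infty(S^3, \V_\lambda^{2N+1}), C^\infty(S^2, \L_{m,\nu})\right)$ is isomorphic to a space of $\mathfrak{g}'$-homomorphisms between generalized Verma modules, and the algebraic Fourier transform on the nilradical of $P$ (which is abelian in our setting) turns this into the problem of finding polynomial solutions of a system of PDEs whose principal term is of order two. I would carry this out in two stages, matching Sections \ref{section-step1} and \ref{section-step2}, reducing the analytic problem in the extremal case $|m|=N$ to an ordinary differential equation which can be analyzed by hand using special functions.

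In Step 1, I would exploit the $\mathfrak{m}'$-equivariance of the F-system and decompose the unknown polynomial valued in the rank-$(2N+1)$ fiber along a weight basis, so that the two-variable PDE system decouples into a family of coupled PDEs indexed by weights. In Step 2, specialized to $|m| \geq N$, the constraint imposed by the target weight-$m$ line bundle is strong enough to force most components of the weight decomposition to vanish; this yields Theorem \ref{Thm-findingequations}, which translates the problem into a system of ordinary differential equations in a single variable. In the extremal case $|m|=N$ treated here, only one weight component survives, and the system collapses to essentially a single second-order ODE of Gegenbauer type in which $\lambda$ and $\nu$ appear as parameters.

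The implication (iii) $\Rightarrow$ (ii) would then amount to exhibiting an explicit polynomial solution to this ODE when $\lambda-\nu \in \N$: I expect to find it as a constant multiple of a Gegenbauer polynomial $\widetilde{C}_{\lambda-\nu}^{\alpha}$ with $\alpha$ depending on $(\nu,N)$, and the appendix on Gegenbauer polynomials will supply the identities needed to verify both that it is a solution and that the solution space is exactly one-dimensional, giving the equality of dimensions in (ii). The implication (ii) $\Rightarrow$ (i) is trivial. For (i) $\Rightarrow$ (iii), I would use the indicial analysis of the Gegenbauer ODE: a non-zero polynomial solution forces a terminating hypergeometric series, which in turn forces $\lambda-\nu \in \N$; any other parameter value yields only transcendental solutions, violating the polynomiality required by the F-method.

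Finally, although the statement is written for $m=\pm N$, I would carry out the argument above only for $m=N$ and then invoke Proposition \ref{prop-duality_m_and_-m} to transport the result to $m=-N$ by the $m \leftrightarrow -m$ duality, which identifies the two spaces of operators and preserves condition (iii). The main obstacle I anticipate is Step 2: ensuring that all Clebsch--Gordan couplings in the weight decomposition really do collapse in the extremal case to produce a single Gegenbauer equation with the right parameters. Once the ODE is correctly identified, the classification of its polynomial solutions by the condition $\lambda-\nu \in \N$ is a clean special-function computation, but tracking the couplings to arrive at that ODE is where the bulk of the technical work lies.
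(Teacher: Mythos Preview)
Your high-level strategy is correct and matches the paper's: apply the F-method to reduce to $\Sol(\n_+;\sigma_\lambda^{2N+1},\tau_{m,\nu})$, carry out Steps 1 and 2, and invoke Proposition~\ref{prop-duality_m_and_-m} for $m=-N$. The necessary condition $\nu-\lambda\in\N$ also arises where you expect, though for a simpler reason than indicial analysis: it comes already from Step~1 via the $A$-equivariance, which forces homogeneity of degree $a=\nu-\lambda$ (Proposition~\ref{prop-FirststepFmethod}).

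However, there is a genuine gap in your understanding of Step~2. You assert that in the extremal case $|m|=N$ ``only one weight component survives, and the system collapses to essentially a single second-order ODE of Gegenbauer type.'' This is false. Under $M'\simeq SO(2)$ the fiber $V^{2N+1}$ decomposes as $\bigoplus_{\ell=-N}^{N}\C_\ell$, and since $\Pol(\n_+)$ itself carries $SO(2)$-weights, \emph{every} weight component $\C_\ell$ contributes to $\Hom_{L'}(V_\lambda^{2N+1},\C_{m,\nu}\otimes\Pol(\n_+))$ once paired with the harmonic polynomial of matching degree (Lemma~\ref{lemma-Harm}, Proposition~\ref{prop-FirststepFmethod}). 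Concretely, for $m=N$ the index set $K_{N,N}=\{0,1,\ldots,2N\}$ has $2N+1$ elements, and the unknown $\psi$ involves $2N+1$ scalar functions $g_0,\ldots,g_{2N}$; this is visible in the final operator \eqref{Operator-general+}, which is a sum of $2N+1$ terms.

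Theorem~\ref{Thm-findingequations} therefore does not yield a single Gegenbauer equation but an overdetermined \emph{coupled} system: roughly $4N$ equations $(A_j^\pm),(B_j^\pm)$ in the $2N+1$ unknowns $f_{-N},\ldots,f_N$ (see \eqref{ODEsystem}). The paper devotes all of Section~\ref{section-proof_of_solving_equations} to solving this system via a three-phase inductive scheme: solve $(A_N^\pm)$ for $f_{\pm N}$, then propagate inward via $(B_j^\pm)$ while checking $(A_j^\pm)$, and finally match the two expressions obtained for $f_0$. Several delicate case distinctions (on the vanishing of certain gamma factors $\Gamma_0^\pm$) are needed to show the solution space is exactly one-dimensional. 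Your plan to read off both existence and uniqueness from a single Gegenbauer polynomial would work only for $N=0$; for general $N$ you are missing the entire inductive argument that constitutes Theorem~\ref{Thm-solvingequations}.
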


Theorem \ref{mainthm1} establishes that the dimension of the space of differential symmetry breaking operators (\ref{DSBO-space}) is at most one. In Theorem \ref{mainthm2} below, we provide an explicit formula of the generator $\D_{\lambda, \nu}^{N, m}$ expressed in terms of the coordinates $(x_1, x_2, x_3) \in \R^3$ via the conformal compactification $\R^3 \arrowiota S^3$:
\begin{equation*}
\begin{tikzcd}
C^\infty(S^3, \V_\lambda^{2N+1}) \arrow[d, hookrightarrow, "{\displaystyle{\iota^*}}"'] \arrow[r, dashed] & C^\infty(S^2, \L_{m, \nu}) \arrow[d, hookrightarrow, "{\displaystyle{\iota^*}}"']\\
C^\infty(\R^3, V^{2N+1}) \arrow[r, "\D_{\lambda, \nu}^{N, m}"] & C^\infty(\R^2)
\end{tikzcd}
\end{equation*}
where $\R^2 \subset \R^3$ is realized as $\R^2 = \{(x_1, x_2, 0) : x_1, x_2 \in \R\}$.

To provide an explicit formula of $\D_{\lambda, \nu}^{N, m}$, let $\{u_d : d = 0, 1, \ldots, 2N\}$ be the standard basis of $V^{2N+1} \simeq \C^{2N+1}$ (see \eqref{basis-V2N+1}), let $\{u_d^\vee : d = 0, 1, \ldots, 2N\}$ denote the dual basis of $(V^{2N+1})^\vee$. Define $z = x_1 +ix_2$ so that the Laplacian on $\R^2$, $\Delta_{\R^2} = \frac{\partial^2}{\partial x_1^2} + \frac{\partial^2}{\partial x_2^2}$, can be written as $\Delta_{\R^2} = 4\frac{\partial^2}{\partial z \partial \overline{z}}$.

Suppose $\nu-\lambda \in \N$ and let $\widetilde{\C}_{\lambda, \nu}$ denote the following scalar-valued differential operator (cf. \cite[Eq. (2.22)]{kkp})

\begin{equation}\label{def-operator-Ctilda}
\begin{aligned}
\widetilde{\C}_{\lambda, \nu} & = \Rest_{x_3 = 0} \circ \left(I_{\nu-\lambda} \widetilde{C}^{\lambda-1}_{\nu-\lambda}\right)\left(-4\frac{\partial^2}{\partial z \partial \overline{z}}, \frac{\partial}{\partial x_3}\right),
\end{aligned}
\end{equation}
where $(I_\ell\Geg_\ell^\mu)(x,y) := x^{\frac{\ell}{2}}\Geg_\ell^\mu\left(\frac{y}{\sqrt{x}}\right)$ is a polynomial in two variables associated with the renormalized Gegenbauer polynomial $\Geg_\ell^\mu(z)$ (see (\ref{Gegenbauer-polynomial(renormalized)})). 

For $k = 0, 1, \ldots, 2N$ we define the following constant:
\begin{equation}\label{const-A}
A_{k} :=
\begin{cases}
{\displaystyle{ \frac{\Gamma\left(\lambda + N -1 + \left[\frac{\nu - \lambda -k+1}{2}\right]\right)}{\Gamma\left(\lambda + N-1\right)}}}, &\text{ if } 0 \leq \nu - \lambda \leq N,\\[10pt]
{\displaystyle{\frac{\Gamma\left(\lambda + N -1 + \left[\frac{\nu - \lambda -k+1}{2}\right]\right)}{\Gamma\left(\lambda + N-1 +\left[\frac{\nu - \lambda -2N +1}{2}\right]\right)}}}, &\text{ if } \nu - \lambda > N.
\end{cases}
\end{equation}

Now, we give the complete solution to Problem B for $|m| = N$.

\begin{thm}\label{mainthm2} Let $\lambda, \nu \in \C$ with $\nu - \lambda \in \N$, and let $m = \pm N$. Then, any differential symmetry breaking operator in {\normalfont{(\ref{DSBO-space})}} is proportional to the differential operator $\D_{\lambda, \nu}^{N, m}$ of order $\nu-\lambda$ given as follows:
\begin{itemize}[leftmargin=0.5cm]
\item[\normalfont{$\bullet$}] $m = N:$
\begin{equation}\label{Operator-general+}
\D_{\lambda, \nu}^{N,N} = \sum_{k = 0}^{2N} 2^k A_k \widetilde{\C}_{\lambda + N, \nu + N-k}\frac{\partial^k}{\partial \overline{z}^k} \otimes u_{k}^\vee,
\end{equation}
\item[\normalfont{$\bullet$}] $m = -N:$
\begin{equation}\label{Operator-general-}
\D_{\lambda, \nu}^{N,-N} =\sum_{k = 0}^{2N} (-2)^k A_k \widetilde{\C}_{\lambda + N, \nu + N-k}\frac{\partial^k}{\partial \overline{z}^k} \otimes u_{2N-k}^\vee
\end{equation}
\end{itemize}
\end{thm}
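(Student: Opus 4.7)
The plan is to follow the F-method pipeline. By the duality theorem (\cite[Thm. 2.9]{kob-pev1}), the space \eqref{DSBO-space} is isomorphic to a space of $(\mathfrak{g}', \mathfrak{p}')$-homomorphisms between appropriate generalized Verma modules, which via the algebraic Fourier transform corresponds to polynomial solutions $\psi \in \Pol[\overline{\mathfrak{n}}_+] \otimes (V^{2N+1})^\vee$ of a second-order PDE system indexed by generators of the nilradical $\overline{\mathfrak{n}}_+'$. Since Theorem \ref{mainthm1} already guarantees that such a solution space is at most one-dimensional and non-zero only when $\nu - \lambda \in \N$, the task is to exhibit one explicit non-trivial $\psi$ and then recover the associated differential operator via the inverse Fourier transform.

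First I would carry out the F-method in two stages. Stage one (Step 1) computes the PDE system in explicit coordinates on $\overline{\mathfrak{n}}_+ \simeq \R^3$ after specializing to $(SO_0(4,1), SO_0(3,1))$ and the bundles $(\V_\lambda^{2N+1}, \L_{m,\nu})$. Stage two (Step 2) exploits the extremal condition $|m| = N$ to decouple that system: writing $\psi = \sum_{k=0}^{2N} \psi_k \otimes u_k^\vee$ and separating a suitable complex variable $\overline{\zeta}$ transverse to $\overline{\mathfrak{n}}_+'$, the multivariable PDE system should reduce to a single-variable ODE system for the $\psi_k$. This reduction is the content of Theorem \ref{Thm-findingequations}.

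Next I would solve the ODE system (Theorem \ref{Thm-solvingequations}). The resulting equations are of Gegenbauer type, so their polynomial solutions must be multiples of the renormalized Gegenbauer polynomials $\widetilde{C}^{\lambda+N-1}_{\nu-\lambda-k}$; the normalization constants are ratios of Gamma functions, chosen so that the different $u_k^\vee$-components come from a single compatible ansatz. I expect them to match the constants $A_k$ of \eqref{const-A}, the case split between $\nu - \lambda \leq N$ and $\nu - \lambda > N$ arising because in the larger range some Gegenbauer indices become negative and the naive leading coefficient vanishes, forcing a rescaling so the solution remains non-zero and compatible across all $k$. Once $\psi$ is in hand, applying the inverse algebraic Fourier transform sends polynomial variables to partial derivatives on $\R^3$: each Gegenbauer factor becomes a copy of the operator $\widetilde{\C}_{\lambda+N, \nu+N-k}$ from \eqref{def-operator-Ctilda}, each power of $\overline{\zeta}$ becomes $\partial^k/\partial \overline{z}^k$, and the components assemble into \eqref{Operator-general+} for $m = N$. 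The case $m = -N$ then follows immediately from Proposition \ref{prop-duality_m_and_-m}, which transports \eqref{Operator-general+} to \eqref{Operator-general-} via the relabelling $u_k^\vee \mapsto u_{2N-k}^\vee$ together with the sign change $2^k \mapsto (-2)^k$.

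The main obstacle will be stage two: finding the correct ansatz so that the PDE system from stage one truly decouples into a clean ODE system, solving those ODEs while correctly tracking the Gamma-function combinatorics, and identifying the two regimes in which the naive formula is valid versus must be renormalized. Once this technical heart is settled, the assembly into the explicit operator and the passage to $m = -N$ via duality are essentially formal.
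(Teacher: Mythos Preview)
Your proposal is correct and follows essentially the same approach as the paper: reduce to $\Sol(\n_+;\sigma_\lambda^{2N+1},\tau_{N,\nu})$ via the F-method isomorphism, invoke Theorems \ref{Thm-findingequations} and \ref{Thm-solvingequations} to produce the Gegenbauer-type generator, apply the inverse of the symbol map to obtain \eqref{Operator-general+}, and then use Proposition \ref{prop-duality_m_and_-m} for $m=-N$. The only minor discrepancy is that the paper appeals directly to the symbol isomorphism of Theorem \ref{F-method-thm} rather than the Verma-module duality theorem, but this is a cosmetic difference within the same F-method framework.
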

Both Theorems \ref{mainthm1} and \ref{mainthm2} will be proved in Section \ref{section-proof-of-main-thms} for $m = N$ and in Section \ref{section-case_m_lessthan_-N} for $m = -N$.

\begin{rem}\label{rem-mainthms}
As we pointed out in the introduction, the case $N = |m| = 1$ is done in \cite{perez-valdes}. Concretely, the operators $\D_{\lambda, \nu}^{\pm 1} \equiv \D_{\lambda, \nu}^{1, \pm 1}$ appearing in \cite[Eqs. (1.6)--(1.9)]{perez-valdes} are as follows:
\begin{itemize}
\item[\normalfont{$\bullet$}] If $\nu - \lambda = 0:$
\begin{equation}\label{Operator-particular1}
\D_{\lambda, \lambda}^1 = \Rest_{x_3 = 0} \otimes u_0^\vee,
\end{equation}
\begin{equation}\label{Operator-particular-1}
\D_{\lambda, \lambda}^{-1} = \Rest_{x_3 = 0} \otimes u_2^\vee.
\end{equation}
\item[\normalfont{$\bullet$}] If $\nu - \lambda \geq 1:$
\begin{multline}\label{Operator-general1}
\D_{\lambda,\nu}^{1}  =  \left(\lambda + \left[\frac{\nu-\lambda-1}{2}\right]\right) \widetilde{\C}_{\lambda+1, \nu+1} \otimes u_0^\vee\\
+2\gamma(\lambda-1, \nu-\lambda)\widetilde{\C}_{\lambda+1, \nu} \frac{\partial}{\partial \overline{z}} \otimes u_1^\vee
+ 4\widetilde{\C}_{\lambda+1, \nu-1} \frac{\partial^2}{\partial \overline{z}^2} \otimes u_2^\vee.
\end{multline}
\begin{multline}\label{Operator-general-1}
\D_{\lambda,\nu}^{-1} = 4\widetilde{\C}_{\lambda+1, \nu-1} \frac{\partial^2}{\partial z^2} \otimes u_0^\vee\\ -2\gamma(\lambda-1, \nu-\lambda)\widetilde{\C}_{\lambda+1, \nu} \frac{\partial}{\partial z} \otimes u_1^\vee
+ \left(\lambda + \left[\frac{\nu-\lambda-1}{2}\right]\right) \widetilde{\C}_{\lambda+1, \nu+1} \otimes u_2^\vee.
\end{multline}
\end{itemize}
As one can check by a straightforward computation, these expressions coincide with \eqref{Operator-general+} and \eqref{Operator-general-}.
\end{rem}

\section{Review of the F-method}\label{section-Fmethod}
In this section we provide a summary of the key tool used to prove our main results: The F-method. We do a quick review in order to state the main theorem in this section (see Theorem \ref{F-method-thm}), so that we can apply it in the following sections. For more details, we would like to refer to \cite{kob2013, kob2014, kob-pev1, kob-pev2, kubo-orsted}.

Let $X$ and $Y$ be smooth manifolds, and suppose a smooth map $p: Y \rightarrow X$ is given. Let $\V$ and $\W$ be vector bundles over $X$ and $Y$, respectively. We denote by $C^\infty(X, \V)$ (resp. $C^\infty(Y, \W)$) for the space of smooth sections, equipped with the Fréchet topology defined by the uniform convergence of sections and their derivatives of finite order on compact sets. 

We recall the definition of a \textit{differential operator}, not only for sections on the same manifold, but more generally, for sections on two different manifolds.

\begin{defn}[{\cite[Def. 2.1]{kob-pev1}}]\label{def-diffop} A continuous linear map $T: C^\infty(X, \V)\rightarrow C^\infty(Y, \W)$ is said to be a \textbf{differential operator} if it satisfies
\begin{equation*}
p\left(\supp Tf \right) \subset \supp f, \quad \text{for all } f \in C^\infty(X,\V).
\end{equation*}
\end{defn}

Furthermore, suppose that $G$ is a reductive Lie group acting transitively on $X$, such that $X = G/P$ for some parabolic subgroup $P$ of $G$. Similarly, let $G^\prime \subset G$ be a reductive subgroup of $G$ acting transitively on $Y$, with $Y = G^\prime/P^\prime$ for some parabolic subgroup $P^\prime \subset G^\prime$.

Let $P = MAN_+$ be the Langlands decomposition of $P$, and denote the Lie algebras of $P, M, A, N_+$ by $\p(\R), \m(\R), \a(\R), \n_+(\R)$, respectively. Their complexified Lie algebras are denoted by $\p, \m, \a, \n_+$. The Gelfand--Naimark decomposition of $\g(\R)$ is given by $\g(\R) = \n_-(\R) + \p(\R)$. A similar notation is adopted for $P^\prime$.

Given $\lambda \in \a^* \simeq \Hom_\R(\a(\R), \C)$, we define a one-dimensional representation $\C_\lambda$ of $A$ by 
\begin{equation*}
A \rightarrow \C^\times, \enspace a\mapsto a^\lambda := e^{\langle\lambda, \log a\rangle}.
\end{equation*}
For a representation $(\sigma, V)$ of $M$, and $\lambda \in \a^*$, let $\sigma_\lambda := \sigma \boxtimes \C_\lambda$. This representation can also be viewed as a representation of $P$ by letting $N_+$ act trivially. We define $\V := G \times_P V$ as the $G$-equivariant vector bundle over $X = G/P$ associated with $\sigma_\lambda$.

Similarly, for a representation $(\tau, W)$ of $M^\prime$, and $\nu \in \a^*$, we set $\tau_\nu \equiv \tau \boxtimes \C_\nu$ and define $\W := G^\prime \times_{P^\prime} W$, the $G^\prime$-equivariant vector bundle over $Y$ associated with $\tau_\nu$.

Given $\sigma_\lambda$, we define a new representation $\mu$ of $P$ as follows:
\begin{equation*}
\mu \equiv \sigma^*_\lambda := \sigma^\vee \boxtimes \C_{2\rho-\lambda},
\end{equation*}
where $\sigma^\vee$ denotes the dual representation of $\sigma$. We then form the induced representation
\begin{equation*}
\pi_\mu \equiv \pi_{(\sigma, \lambda)^*} = \Ind_{P}^G(\sigma_\lambda^*)
\end{equation*}
of $G$ on $C^\infty(X,\V^*)$, where $\V^* := G \times_P \sigma_\lambda^*$. 
Here, $\C_{2\rho}$ is the following one-dimensional representation of $P$:
\begin{equation*}
P \ni p \mapsto |\det \left(\Ad(p): \n_+(\R) \rightarrow \n_+(\R)\right)|.
\end{equation*}

The infinitesimal action $d \pi_\mu \equiv d \pi_{(\sigma, \lambda)^*}$ composed with the algebraic Fourier transform $\widehat{\cdot}$ induces a well-defined Lie algebra homomorphism
\begin{equation*}
\widehat{d\pi_\mu}: \g\rightarrow \Weyl(\n_+) \otimes \End(V^\vee),
\end{equation*}
where $\Weyl(\n_+)$ denotes the Weyl algebra on $\n_+$  (see \cite{kob-pev1}). In fact, a closed formula for $d \pi_\mu$ was given in \cite[Eq. (3.13)]{kob-pev1}. 

Now, let
\begin{equation*}
\Hom_{L^\prime}\left(V, W \otimes \Pol(\n_+)\right) := \{\psi \in \Hom_\C\left(V, W \otimes \Pol(\n_+)\right) :\text{\normalfont{ (\ref{F-system-1}) holds}}\},
\end{equation*}
\begin{equation}\label{F-system-1}
\psi \circ \sigma_\lambda(\ell) = \tau_{\nu}(\ell) \circ \Ad_\#(\ell)\psi \quad \forall \enspace \ell \in L^\prime,
\end{equation}
where the $L^\prime$-action on $\Pol(\n_+)$ is given by
\begin{equation*}
\begin{aligned}
\Ad_\#(\ell): \Pol(\n_+) &\longrightarrow \Pol(\n_+)\\
\hspace{2.8cm} p(\cdot) &\longmapsto p\left(\Ad(\ell^{-1}) \cdot\right).
\end{aligned}
\end{equation*}
Furthermore, we set
\begin{equation*}
\Sol(\n_+; \sigma_\lambda, \tau_\nu) := \{\psi \in \Hom_{L^\prime}\left(V, W \otimes \Pol(\n_+)\right) :\text{\normalfont{(\ref{F-system-2}) holds}}\},
\end{equation*}
\begin{equation}\label{F-system-2}
\left(\widehat{d \pi_\mu}(C) \otimes \id_{W} \right)\psi = 0 \quad \forall \enspace C\in \n_+^\prime.
\end{equation}
If $\n_+$ is abelian, then we have (see \cite[Sec. 4.3]{kob-pev1}):
\begin{equation*}
\Diff_{G^\prime}\left(C^\infty(X, \V), C^\infty(Y, \W)\right) \subset \Diff^\text{const}(\n_-) \otimes \Hom_\C(V,W).
\end{equation*}
By applying the symbol map (see \cite[Sec. 4.1]{kob-pev1}), we obtain the following:
\begin{thm}[{\cite[Thm. 4.1]{kob-pev1}}]\label{F-method-thm} Suppose that the Lie algebra $\n_+$ is abelian. Then, there exists a linear isomorphism
\begin{equation*}
\begin{tikzcd}[column sep = 1cm]
\Diff_{G^\prime}\left(C^\infty(X, \V), C^\infty(Y, \W)\right) \arrow[r, "\displaystyle{_{\Symb \otimes \id}}", "\thicksim"'] & \Sol(\n_+; \sigma_\lambda, \tau_\nu).
\end{tikzcd}
\end{equation*}
\end{thm}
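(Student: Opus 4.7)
The plan is to take the inclusion already quoted from \cite[Sec. 4.3]{kob-pev1} for abelian $\n_+$,
\[
\Diff_{G^\prime}\!\left(C^\infty(X,\V), C^\infty(Y,\W)\right) \subset \Diff^{\text{const}}(\n_-) \otimes \Hom_\C(V,W),
\]
and verify that the image of the left-hand side under $\Symb \otimes \id$ is precisely $\Sol(\n_+; \sigma_\lambda, \tau_\nu)$. The symbol map $\Symb$ is the standard $L$-equivariant linear isomorphism between constant-coefficient differential operators on $\n_-$ and polynomials on $\n_+$, obtained from the non-degenerate $L$-invariant pairing $\n_-\times\n_+\to\C$ inherited from the Killing form; hence $\Symb\otimes\id$ is already a vector-space isomorphism onto $\Hom_\C(V,W\otimes\Pol(\n_+))$. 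Injectivity on the subspace of $G^\prime$-equivariant operators is therefore immediate, and the content of the theorem lies in identifying $G^\prime$-equivariance with the two defining relations \eqref{F-system-1} and \eqref{F-system-2}.

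I would carry out that identification using the Gelfand--Naimark decomposition $\g^\prime = \n_-^\prime + \l^\prime + \n_+^\prime$, working on the open Bruhat cell $N_- \hookrightarrow G/P$ (and $N_-^\prime \hookrightarrow G^\prime/P^\prime$) which trivializes both bundles. For $X \in \n_-^\prime$, the infinitesimal action in these coordinates is a translation, i.e.\ a vector field with constant coefficients, which automatically commutes with every constant-coefficient differential operator on $\n_-$; thus $\n_-^\prime$-equivariance imposes no condition. For $\ell \in L^\prime$, the actions on the fibers are $\sigma_\lambda$ and $\tau_\nu$, while $L^\prime$ acts on $\n_-$ by the restricted adjoint and therefore on $\Pol(\n_+)$ through $\Ad_\#$ via the pairing above; translating the equivariance of the differential operator under $\Symb \otimes \id$ yields exactly \eqref{F-system-1}. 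Finally, for $C \in \n_+^\prime$, the infinitesimal action $d\pi_\mu(C)$ on sections in the non-compact model is a first-order differential operator in the $\n_-$ coordinates with polynomial coefficients, with the $2\rho$-shift built into $\mu = \sigma_\lambda^*$ entering through the modular character of $P$ (explicit formula in \cite[Eq. (3.13)]{kob-pev1}). Applying the algebraic Fourier transform converts $d\pi_\mu(C)$ into $\widehat{d\pi_\mu}(C) \in \Weyl(\n_+) \otimes \End(V^\vee)$, and the $\n_+^\prime$-intertwining condition for the differential SBO, rewritten under $\Symb \otimes \id$, becomes precisely \eqref{F-system-2}.

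The main obstacle is the bookkeeping in this last step: verifying that the sign, duality, and $2\rho$-normalization conventions line up so that $G^\prime$-equivariance in the sections picture translates, after dualization and Fourier transform, into the stated condition phrased in terms of the dual induced representation $\pi_\mu = \pi_{(\sigma,\lambda)^*}$. Abelianness of $\n_+$ is crucial at this point, as it makes the PBW identification $U(\g)\otimes_{U(\p)}V^\vee \simeq S(\n_-)\otimes V^\vee$ clean and ensures $\widehat{d\pi_\mu}(C)$ has order at most two. Surjectivity of $\Symb \otimes \id$ onto $\Sol(\n_+; \sigma_\lambda, \tau_\nu)$ then follows by running the reductions in reverse: a $\psi \in \Sol$ defines via the inverse Fourier transform a $\p^\prime$-homomorphism $W^\vee_{-\nu} \to U(\g)\otimes_{U(\p)}V^\vee_{-\lambda}$; Frobenius reciprocity extends this uniquely to a $(\g^\prime, P^\prime)$-intertwiner of generalized Verma modules, and the duality theorem \cite[Thm. 2.9]{kob-pev1} returns a differential symmetry breaking operator whose symbol is $\psi$.
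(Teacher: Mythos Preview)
The paper does not prove this theorem: it is stated with an explicit citation to \cite[Thm. 4.1]{kob-pev1} and is used as a black-box tool, with no proof given in the present text. Your proposal is a reasonable and essentially correct outline of the argument one finds in the original reference, so there is no discrepancy to flag beyond the fact that you have supplied a proof where the paper supplies only a citation.
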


By Theorem \ref{F-method-thm} above, to determine $\Diff_{G^\prime}\left(C^\infty(X, \V), C^\infty(Y, \W)\right)$, it suffices to compute $\Sol(\n_+; \sigma_\lambda, \tau_\nu)$. This can be achieved by following the two steps below:
\begin{itemize}
\item \underline{Step 1}. Determine the generators of $\Hom_{L^\prime}\left(V, W \otimes \Pol(\n_+)\right)$.
\item \underline{Step 2}. Solve the equation $\left(\widehat{d \pi_\mu}(C) \otimes \id_{W} \right)\psi = 0, \quad \forall \enspace C\in \n_+^\prime$.
\end{itemize}
In Sections \ref{section-step1} and \ref{section-step2}, we will carry out Steps 1 and 2 for the pair $\left(G, G^\prime\right) = \left(SO_0(4,1), SO_0(3,1)\right)$.

\section{Principal Series Representations of $G = SO_0(4,1)$ and $G^\prime = SO_0(3,1)$}\label{section-setting}
In the previous section we reviewed the F-method in a general setting for two flag varieties $X=G/P$ and $Y=G^\prime/P^\prime$, along with vector bundles $\V \rightarrow X$ and $\W \rightarrow Y$. From this point onward, we focus on the specific case where $(G, G^\prime) = (SO_0(4,1), SO_0(3,1))$ and $(X,Y) = (S^3,S^2)$. In this section, we briefly review the geometric realization of $G$ and $G^\prime$, and define their principal series representations $\Ind_P^G(\sigma_\lambda^{2N+1})$ and $\Ind_{P^\prime}^{G^\prime}(\tau_{m, \nu})$. Throughout, we adopt the notation in \cite{perez-valdes}.

We realize the de Sitter group $O(4,1)$ as
\begin{equation*}
O(4,1) = \{g \in GL(5,\R) : Q_{4,1}(gx) = Q_{4,1}(x) \enspace \forall x \in \R^5\},
\end{equation*}
where $Q_{4,1}$ is the following bilinear form on $\R^5$ of signature $(4,1)$:
\begin{equation*}
Q_{4,1}(x) := x_0^2 + x_1^2 + x_2^2 + x_3^2 - x_4^2, \quad \text{for } x = {}^t(x_0, \dots, x_4) \in \R.
\end{equation*}

Let $\g(\R) := \so(4,1)$ denote the Lie algebra of $O(4,1)$ and define the following elements:
\begin{equation}\label{elements}
\begin{aligned}
H_0 & := E_{1,5} + E_{5,1},\\
C_j^+ & := -E_{1,j+1} + E_{j+1,1} - E_{5,j+1} - E_{j+1,5} & (1 \leq j  \leq 3),\\
C_j^- & := -E_{1,j+1} + E_{j+1,1} + E_{5,j+1} + E_{j+1,5} & (1 \leq j  \leq 3),\\
X_{p,q} & := E_{q+1, p+1} - E_{p+1, q+1} & (1 \leq p \leq q \leq 3).
\end{aligned}
\end{equation}
Here, $E_{p,q} \enspace (1 \leq p, q \leq 5)$ represent the elementary $5\times 5$ matrix with a $1$ in the $(p,q)$-entry and zeros elsewhere. One clearly has
\begin{equation}\label{brackets-Cpm}
\frac{1}{2}[C_j^+, C_k^-] = X_{j, k} -\delta_{j,k}H_0, \quad X_{p,q} = -X_{q,p}.
\end{equation}
Note that the sets $\{C_j^+: 1 \leq j  \leq 3\}$, $\{C_j^-: 1 \leq j  \leq 3\}$ and $\{X_{p,q} : 1 \leq p,q \leq 3\}\cup\{H_0\}$ determine bases of the following Lie subalgebras of $\g(\R)$:
\begin{equation*}
\begin{aligned}
\n_+(\R) & := \ker(\ad(H_0) - \id)\subset \g(\R),\\
\n_-(\R) & := \ker(\ad(H_0) + \id)\subset \g(\R),\\
\l(\R) & := \ker\ad(H_0)\subset \g(\R).
\end{aligned}
\end{equation*}

Now, let $\Xi$ be the isotropic cone defined by $Q_{4,1}$:
\begin{equation*}
\Xi := \{x \in \R^5\setminus\{0\} : Q_{4,1}(x) = 0\} \subset \R^5,
\end{equation*}
and observe that the projection $p: \Xi \rightarrow S^3, \enspace x \mapsto p(x) := \frac{1}{x_4}{}^t(x_0, x_1, x_2, x_3)$ determines a bijection $\widetilde{p}: \Xi/\R^\times \arrowsimeq S^3$. Let $G := SO_0(4,1)$ be the identity component of $O(4,1)$ and define 
\begin{equation*}
P := \stab_G(1:0:0:0:1).
\end{equation*}
Then, since the action of $G$ on $\Xi/\R^\times$ is transitive, we have bijections
\begin{equation*}
G/P \arrowsimeq \Xi/\R^\times \arrowsimeq S^3.
\end{equation*}
The group $P$ is a parabolic subgroup of $G$ with Langlands decomposition $P = LN_+ = MAN_+$, where
\begin{equation*}
\begin{gathered}
M = \left\{
\begin{pmatrix}
1 & &\\
& g &\\
& & 1
\end{pmatrix} : g \in SO(3)
\right\} \simeq SO(3),\\[6pt]
A = \exp(\R H_0) \simeq SO_0(1,1), \quad
N_+ = \exp(\n_+(\R)) \simeq \R^3.
\end{gathered}
\end{equation*}

Moreover, for $x = {}^t(x_1, x_2, x_3) \in \R^3$ let $Q_3(x) \equiv \|x\|_{\R^3}^2 := x_1^2 + x_2^2 + x_3^2$ and let $N_- := \exp(\n_-(\R))$. We define a diffeomorphism $n_-: \R^3 \rightarrow N_-$ by
\begin{equation}
\label{open-Bruhat-cell}
\begin{gathered}
n_-(x) := \exp\left(\sum_{j = 1}^3 x_jC_j^-\right) =  I_3 +
\begin{pmatrix}
- \frac{1}{2}Q_3(x) & -{}^tx & -\frac{1}{2}Q_3(x)\\
x & 0 & x\\
\frac{1}{2}Q_3(x) & {}^t x & \frac{1}{2}Q_3(x)
\end{pmatrix}.
\end{gathered}
\end{equation}
Then, this map gives the coordinates of the open Bruhat cell:
\begin{equation*}
\n_-(\R) \hookrightarrow G/P \simeq S^3, \enspace Z \mapsto \exp(Z) \cdot P.
\end{equation*}

Now, for any $\lambda \in \C$ and any $N \in \N$, let $(\sigma^{2N+1}_\lambda, V_\lambda^{2N+1})$ be the following $(2N+1)$-dimensional representation of $L \equiv MA$
\begin{equation*}
\begin{aligned}
\sigma_\lambda^{2N+1}: L = M \times A &\longrightarrow GL_\C(V^{2N+1})\\
\hspace{-0.2cm}(B, \exp(t H_0)) &\longmapsto e^{\lambda t}\sigma^{2N+1}(B),
\end{aligned}
\end{equation*}
where $\sigma^{2N+1}$ denotes the $(2N+1)$-dimensional irreducible representation of $SO(3)$ realized in $V^{2N+1}$, the space of homogeneous polynomials of degree $2N$ in two variables (see Section \ref{section-fdrepsofSO(3)}). For instance, if $N = 0$ we have the trivial representation of $SO(3)$, while if $N=1$, $\sigma^3$ is isomorphic to the natural representation on $\C^3$. We observe that $\sigma_\lambda^{2N+1}$ can be written as $\sigma^{2N+1} \boxtimes \C_\lambda$, where $\C_\lambda$ is the character of $A$ defined by exponentiation $A \rightarrow \C^\times, a=e^{tH_0}\mapsto a^\lambda = e^{\lambda t}$.

We let $N_+$ act trivially and think of $\sigma_\lambda^{2N+1}$ as a representation of $P$. We define the following homogeneous vector bundle over $G/P\simeq S^3$ associated to $(\sigma_\lambda^{2N+1}, V_\lambda^{2N+1}):$
\begin{equation}\label{vectorbundle-V}
\V_\lambda^{2N+1} := G \times_P V_\lambda^{2N+1}.
\end{equation}
Then, we form the principal series representation $\Ind_P^G(\sigma_\lambda^{2N+1})$ of $G$ on the space of smooth sections $C^\infty(G/P, \V_\lambda^{2N+1})$, or equivalently, on 
\begin{equation*}
\begin{aligned}
C^\infty(G, V_\lambda^{2N+1})^P =
\{f \in C^\infty(G, V_\lambda^{2N+1}) : f(gman) &= \sigma^{2N+1}(m)^{-1} a^{-\lambda}f(g) \\
& \text{for all } m\in M, a\in A, n\in N_+, g\in G\}.
\end{aligned}
\end{equation*}
Its $N$-picture is defined on $C^\infty(\R^3, V^{2N+1})$ via the restriction to the open Bruhat cell (\ref{open-Bruhat-cell}):
\begin{equation*}
\begin{aligned}
C^\infty(G/P, \V_\lambda^{2N+1}) \simeq C^\infty(G, V_\lambda^{2N+1})^P &\longrightarrow C^\infty(\R^3, V^{2N+1})\\
\hspace{5.2cm}f &\longmapsto F(x) := f(n_-(x)).
\end{aligned}
\end{equation*}

Now, we realize $G^\prime = SO_0(3,1)$ as $G^\prime \simeq \stab_G{}^t(0,0,0,1,0) \subset G$, and denote its Lie algebra by $\g^\prime(\R)$. The action of this subgroup leaves invariant $\Xi\cap\{x_3 = 0\}$, thus it acts transitively on the submanifold
\begin{equation*}
S^2 = \{(y_0, y_1, y_2, y_3) \in S^3 : y_3 = 0\} \simeq \left(\Xi\cap\{x_3 = 0\}\right)/\R^\times.
\end{equation*}
Then, $P^\prime := P \cap G^\prime$ is a parabolic subgroup of $G^\prime$ with Langlands decomposition $P^\prime = L^\prime N_+^\prime = M^\prime A N_+^\prime$, where
\begin{equation}\label{realization-Mprime}
\begin{gathered}
M^\prime = M\cap G^\prime =\left\{
\begin{pmatrix}
1 & & &\\
& B & &\\
& & 1 &\\
& & & 1
\end{pmatrix} : B\in SO(2) \right\} \simeq SO(2),\\[6pt]
N^\prime_+ =  N_+ \cap G^\prime = \exp(\n_+^\prime(\R)) \simeq \R^2.
\end{gathered}
\end{equation}
Here, the Lie algebras $\n_\pm^\prime(\R)$ stand for $\n_\pm^\prime(\R) := \n_\pm(\R)\cap \g^\prime$. In particular, the sets $\{C_j^+: 1 \leq j  \leq 2\}$ and $\{C_j^-: 1 \leq j  \leq 2\}$ 
determine bases of $\n_+^\prime(\R)$ and $\n_-^\prime(\R)$ respectively.
 
For any $\nu \in \C$ and any $m \in \Z$, let $(\tau_{m, \nu}, \C_{m, \nu})$ be the following one-dimensional representation of $L^\prime \equiv M^\prime A$ 

\begin{equation*}
\begin{gathered}
\tau_{m, \nu}: L^\prime = M^\prime \times A \longrightarrow \C^\times\\
\hspace{1.4cm}(
\begin{pmatrix}
\cos s & -\sin s\\
\sin s & \cos s
\end{pmatrix}
, \exp(t H_0)) \longmapsto e^{ims}e^{\nu t}.
\end{gathered}
\end{equation*}
As before, by letting $N_+^\prime$ act trivially, we treat $\tau_{m, \nu}$ as a representation of $P^\prime$ and define the following homogeneous vector bundle over $G^\prime/P^\prime\simeq S^2$ associated to $(\tau_{m,\nu}, \C_{m,\nu}):$
\begin{equation}\label{vectorbundle-L}
\L_{m,\nu} := G^\prime \times_{P^\prime} \C_{m,\nu}.
\end{equation}
We form the principal series representation $\Ind_{P^\prime}^{G^\prime}(\tau_{m,\nu})$ of $G^\prime$ on $C^\infty(G^\prime/P^\prime, \L_{m,\nu})$.

It follows from Theorem \ref{F-method-thm}, that the symbol map gives a linear isomorphism
\begin{equation}\label{isomorphism-Fmethod-concrete}
\begin{tikzcd}[column sep = 1cm]
\Diff_{G^\prime}\left(C^\infty(S^3, \V_\lambda^{2N+1}), C^\infty(S^2, \L_{m,\nu})\right) \arrow[r, "\displaystyle{_{\Symb \otimes \id}}", "\thicksim"'] & \Sol(\n_+; \sigma_\lambda^{2N+1}, \tau_{m, \nu}),
\end{tikzcd}
\end{equation}
where the right-hand side stands for
\begin{equation}\label{F-system}
\Sol(\n_+; \sigma_\lambda^{2N+1}, \tau_{m, \nu}) := \{\psi \in \Hom_\C\left(V_\lambda^{2N+1}, \C_{m,\nu} \otimes \Pol(\n_+)\right) :\text{(\ref{F-system-1-concrete}), (\ref{F-system-2-concrete}) hold}\},
\end{equation}
\begin{equation}\label{F-system-1-concrete}
\psi \circ \sigma_\lambda^{2N+1}(\ell) = \tau_{m, \nu}(\ell) \circ \Ad_\#(\ell)\psi, \quad \forall \enspace \ell \in L^\prime=M^\prime A,
\end{equation}
\begin{equation}\label{F-system-2-concrete}
\left(\widehat{d \pi_\mu}(C) \otimes \id_{\C_{m,\nu}} \right)\psi = 0, \quad \forall \enspace C\in \n_+^\prime.
\end{equation}
In the following sections, we study Steps 1 and 2 of the F-method to determine the space $\Sol(\n_+; \sigma_\lambda^{2N+1}, \tau_{m, \nu})$.

\section{Step 1: Generators of $\Hom_{L^\prime}\left(V_\lambda^{2N+1}, \C_{m,\nu} \otimes \Pol(\n_+)\right)$}\label{section-step1}
In this section, we apply finite-dimensional representation theory to identify the generators of the space
\begin{equation}\label{space-step1}
 \Hom_{L^\prime}\left(V_\lambda^{2N+1}, \C_{m,\nu} \otimes \Pol(\n_+)\right),
\end{equation}
for $|m| \geq N$ (see Proposition \ref{prop-FirststepFmethod}). This corresponds to the first step in the F-method.
The second step, which involves solving the differential equation (\ref{F-system-2-concrete}), is summarized in Section \ref{section-step2}.

In the following, we do the identification $\Pol(\n_+) \simeq \Pol[\zeta_1,\zeta_2, \zeta_3]$, where $(\zeta_1, \zeta_2, \zeta_3)$ are coordinates in $\n_+$ with respect to the basis $\{C_j^+: 1\leq j\leq 3\}$. A direct computation shows that, using the realizations given in (\ref{realization-Mprime}), the action $\Ad_\#$ of $L^\prime = SO(2) \times A$ on $\Pol(\n_+)$ corresponds to the following standard action:
\begin{equation}\label{action-SO(2)-on-Pol}
\begin{aligned}
SO(2) \times A \times \Pol[\zeta_1,\zeta_2, \zeta_3] & \longrightarrow \Pol[\zeta_1,\zeta_2, \zeta_3]\\
\enspace (g, \exp(tH_0), p(\cdot))& \longmapsto p\left(e^{-t}\begin{pmatrix}
g &\\
& 1
\end{pmatrix}^{-1}\cdot\right).
\end{aligned}
\end{equation}

\subsection{Finite-Dimensional Representation Theory of $SO(3)$}\label{section-fdrepsofSO(3)}
Now, we recall a well-known result regarding the classification of finite-dimensional irreducible representations of $SO(3)$ (see, for instance \cite{hall}, \cite{knapp}).

It is a well-established fact that any finite-dimensional irreducible representation of $SU(2)$ is parametrized by a non-negative integer $k \in \N$ and can be realized in the vector space of homogeneous polynomials of degree $k$ in two variables as follows:
\begin{equation*}
\begin{gathered}
\theta^k: SU(2) \longrightarrow GL_\C\left(V^{k+1}\right), \quad
g \longmapsto \left(p(\cdot) \mapsto p\left(g^{-1} \cdot\right)\right),
\end{gathered}
\end{equation*}
where $V^{k+1} := \{p \in \C[\xi_1, \xi_2]: \deg(p) = k\}$.

Let $\varpi: SU(2) \twoheadrightarrow SO(3)$ be the $2:1$ covering map of $SO(3)$, which has kernel $\ker \varpi = \{\pm I_2\}$. The map can be written explicitly as follows:
\begin{equation}\label{expression-covering}
\begin{gathered}
\varpi(U(x,y)) = \begin{pmatrix}
\Re(x^2-y^2) & \Im(x^2+y^2) & -2\Re(xy)\\[3pt]
-\Im(x^2-y^2) & \Re(x^2+y^2) & 2\Im(xy)\\[3pt]
2\Re(x\overline{y}) & 2\Im(x\overline{y}) & |x|^2-|y|^2
\end{pmatrix},
\end{gathered}
\end{equation}
for $U(x,y) := \begin{pmatrix}
x & y\\
-\overline{y} &\overline{x}
\end{pmatrix} \in SU(2)$.

It can be shown that the representation $\theta^k$ factors through $\varpi$ as a representation of $SO(3)$ if and only if $k = 2N$ for some $N \in \N$. Thus, any finite-dimensional representation of $SO(3)$ is of the form
\begin{equation*}
\begin{gathered}
\sigma^{2N+1}: SO(3) \longrightarrow GL_\C\left(V^{2N+1}\right), \quad
B \longmapsto \left(p(\cdot) \mapsto p\left(\left(\widetilde{\varpi}^{-1}(B)\right)^{-1} \cdot\right)\right),
\end{gathered}
\end{equation*}
where $\widetilde{\varpi}$ denotes the associated isomorphism $\widetilde{\varpi}: SU(2)/\{\pm I_2\} \arrowsimeq SO(3)$.

We consider the following basis of $V^{2N+1}$
\begin{equation}\label{basis-V2N+1}
B(V^{2N+1}) = \{u_j : j = 0,1,2, \ldots, 2N\} = \{\xi_1^{2N-j}\xi_2^{j} : j=0, 1, \ldots, 2N\}.
\end{equation}
Note that the index notation is slightly different from that of \cite{perez-valdes}, where the first element of the basis $B(V^{2N+1})$ was denoted by $u_1$ while here it is $u_0$.  

\subsection{Characterization of $\Hom_{L^\prime}\left(V_\lambda^{2N+1}, \C_{m,\nu} \otimes \Pol(\n_+)\right)$} 
In this subsection we fully characterize the space (\ref{space-step1}) for $|m| \geq N$ (see Proposition \ref{prop-FirststepFmethod}). Following (\ref{realization-Mprime}), we embed $SO(2)$ into $SO(3)$ in the natural way as
\begin{equation}\label{realization-SO(2)inSO(3)}
SO(2) \simeq \left\{\begin{pmatrix}
g &\\
&1
\end{pmatrix}: g\in SO(2)\right\} \subset SO(3).
\end{equation}
We begin with a straightforward but valuable result about harmonic polynomials.

For $k, d \in \N$ let
\begin{equation*}
\Harm^k(\C^d) := \{h\in \Pol^k[\zeta_1, \dots, \zeta_d] : \Delta h = 0\}
\end{equation*}
denote the space of harmonic polynomials, homogeneous of degree $k$ on $\C^d$. When $d = 2$, this space has dimension at most $2$ and can be explicitly described as follows (see, for instance \cite[Thm. 3.1]{helgason}):
\begin{equation}\label{decomposition-Harm-2}
\Harm^k(\C^2) = 
\begin{cases}
\C, & \text{if } k = 0,\\
\C(\zeta_1 - i\zeta_2)^k + \C(\zeta_1 + i\zeta_2)^k, &\text{if } k \geq 0.
\end{cases}
\end{equation}

\begin{lemma}\label{lemma-Harm} Let $N, k \in \N$ and $m \in \Z$, with $|m| \geq N$. Then, the following three conditions on the triple $(N, m, k)$ are equivalent.
\begin{enumerate}[label=\normalfont{(\roman*)}, topsep=2pt]
\item $\Hom_{SO(2)}\left(V^{2N+1}, \C_m\otimes \Harm^k(\C^2)\right) \neq \{0\}$.
\item $\dim_\C \Hom_{SO(2)}\left(V^{2N+1}, \C_m\otimes \Harm^k(\C^2)\right) = 1$.
\item $k \in K_{N,m} := \{|m-\ell|, |m+\ell|: \ell = 0, 1, \ldots, N\}$.
\end{enumerate}
Moreover, if one (therefore all) of the above conditions is satisfied, the generator $h_k^\pm$ of the space $\Hom_{SO(2)}\left(V^{2N+1}, \C_m\otimes \Harm^k(\C^2)\right)$ is given in \eqref{generators-Hom(V2N+1,CmHk)} below.
\begin{equation}\label{generators-Hom(V2N+1,CmHk)}
\begin{array}{l l}
h_{m+\ell}^+: V^{2N+1} \longrightarrow \C_m \otimes \Harm^{m+\ell}(\C^2), &\enspace
u_{N+\ell}=\xi_1^{N-\ell}\xi_2^{N+\ell} \longmapsto 1 \otimes(\zeta_1 + i\zeta_2)^{m+\ell},\\[7pt]
h_{m-\ell}^+: V^{2N+1} \longrightarrow \C_m \otimes \Harm^{m-\ell}(\C^2), &\enspace
u_{N-\ell} = \xi_1^{N+\ell}\xi_2^{N-\ell} \longmapsto 1 \otimes (\zeta_1 + i\zeta_2)^{m-\ell},\\[7pt]
h_{-m-\ell}^-: V^{2N+1} \longrightarrow \C_m \otimes \Harm^{-m-\ell}(\C^2), &\enspace
u_{N+\ell} = \xi_1^{N-\ell}\xi_2^{N+\ell} \longmapsto 1 \otimes (\zeta_1 - i\zeta_2)^{-m-\ell},\\[7pt]
h_{-m+\ell}^-: V^{2N+1} \longrightarrow \C_m \otimes \Harm^{-m+\ell}(\C^2), &\enspace
u_{N-\ell}= \xi_1^{N+\ell}\xi_2^{N-\ell} \longmapsto 1 \otimes(\zeta_1 - i\zeta_2)^{-m+\ell}.
\end{array}
\end{equation}
\end{lemma}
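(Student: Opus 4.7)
The plan is to reduce the statement to a multiplicity calculation on the abelian group $SO(2)$. Since every finite-dimensional $SO(2)$-representation decomposes as a direct sum of one-dimensional characters $\C_w$ ($w \in \Z$), one has $\dim_\C \Hom_{SO(2)}(A, B) = \sum_{w \in \Z} (\dim A_w)(\dim B_w)$, which in the multiplicity-free case reduces to the number of common weights. The strategy is therefore to compute the $SO(2)$-weight decompositions of $V^{2N+1}$ and $\C_m \otimes \Harm^k(\C^2)$, and then to observe that under the hypothesis $|m| \geq N$ the relevant intersection has cardinality at most one.

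The first step is to compute these weight decompositions. Using \eqref{expression-covering} with $y = 0$, the rotation of angle $\theta$ in $SO(2) \subset SO(3)$ (realized as in \eqref{realization-SO(2)inSO(3)}) is the image under $\varpi$ of $\diag(e^{-i\theta/2}, e^{i\theta/2}) \in SU(2)$. Substituting into $\sigma^{2N+1}$ shows that the basis vector $u_j = \xi_1^{2N-j}\xi_2^j$ of \eqref{basis-V2N+1} carries $SO(2)$-weight $N - j$, so
\begin{equation*}
V^{2N+1}\big|_{SO(2)} \;=\; \bigoplus_{j = 0}^{2N}\C_{N - j},
\end{equation*}
with weight set exactly $\{-N, -N+1, \ldots, N\}$ and each weight of multiplicity one. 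A parallel computation based on \eqref{action-SO(2)-on-Pol} shows that $\zeta_1 \pm i\zeta_2$ are $SO(2)$-eigenvectors of weights $\mp 1$, so \eqref{decomposition-Harm-2} yields $\Harm^k(\C^2) = \C_{-k} \oplus \C_k$ for $k \geq 1$ and $\Harm^0(\C^2) = \C_0$. Tensoring with $\C_m$ shifts all weights by $m$, so $\C_m \otimes \Harm^k(\C^2)$ has weight set $\{m - k, m + k\}$ (each with multiplicity one) when $k \geq 1$, and $\{m\}$ when $k = 0$.

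The second step is to deduce the three equivalences. By the observation in the first paragraph,
\begin{equation*}
\dim_\C \Hom_{SO(2)}\!\bigl(V^{2N+1}, \C_m \otimes \Harm^k(\C^2)\bigr) \;=\; \#\bigl(\{m-k, m+k\} \cap \{-N, \ldots, N\}\bigr),
\end{equation*}
with the obvious adjustment for $k = 0$. Under $|m| \geq N$, at most one of $m \pm k$ lies in $[-N, N]$: if $m \geq N$ then $m + k \geq N$, with equality only at $(m, k) = (N, 0)$, in which case $m - k = N$ too, so the intersection is still a single element; the case $m \leq -N$ is symmetric. This gives (i) $\Leftrightarrow$ (ii). For the equivalence with (iii), one unpacks $K_{N, m}$ directly: when $m \geq N$ the absolute values in its definition can be removed, giving $K_{N, m} = \{m - N, \ldots, m + N\}$, which is precisely the set of $k \in \N$ for which $m - k \in [-N, N]$; the case $m \leq -N$ is analogous after replacing $m$ by $-m$.

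Finally, to verify the explicit formulas \eqref{generators-Hom(V2N+1,CmHk)} I would simply match weight vectors on both sides. For instance, in $h^+_{m + \ell}$ the source $u_{N + \ell}$ carries weight $-\ell$ and the target $1 \otimes (\zeta_1 + i\zeta_2)^{m + \ell}$ carries weight $m - (m + \ell) = -\ell$, so the prescribed assignment (with $u_j \mapsto 0$ for $j \neq N + \ell$) is forced to be $SO(2)$-equivariant; the hypothesis $|m| \geq N$ guarantees that the other target weight $2m + \ell$ does not occur in $V^{2N+1}$, so no inconsistencies arise. The remaining three formulas follow identically, and the sign $h^\pm$ distinguishes the cases $m \geq N$ and $m \leq -N$. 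No step presents a substantial obstacle; the only subtle point is the dimension bound, which relies crucially on $|m| \geq N$ -- without this hypothesis the two weights $m \pm k$ could simultaneously lie in $[-N, N]$ and produce a two-dimensional Hom space, as happens, e.g., for $(N, m, k) = (2, 0, 1)$.
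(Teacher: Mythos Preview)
Your proposal is correct and follows essentially the same approach as the paper: both arguments decompose $V^{2N+1}$ and $\C_m\otimes\Harm^k(\C^2)$ into $SO(2)$-characters and count matching weights, using $|m|\ge N$ to ensure at most one match. Your formulation via the intersection $\{m-k,m+k\}\cap\{-N,\dots,N\}$ is slightly more streamlined than the paper's explicit expansion of $(V^{2N+1})^\vee\otimes\C_m\otimes\Harm^k(\C^2)$ followed by taking $SO(2)$-invariants, but the mathematical content is identical.
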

\begin{proof}
Set $g := \begin{pmatrix}
\cos t & -\sin t & 0\\
\sin t & \cos t & 0\\
0 & 0 & 1 
\end{pmatrix} \in SO(2)\hookrightarrow SO(3)$. It follows from (\ref{expression-covering}) that $\varpi(U(e^{-it/2},0)) = g$. Thus, for the basic element $u_j = p(\xi_1, \xi_2) = \xi_1^{2N-j}\xi_2^j$ $(j = 0,1,2, \ldots, 2N)$ (see \eqref{basis-V2N+1}), we have 
\begin{equation*}
\sigma^{2N+1}(g)p(\xi_1, \xi_2) = e^{(N-j)it}p(\xi_1, \xi_2).
\end{equation*}
Therefore,
\begin{equation}\label{decomp-V2N+1}
\begin{aligned}
V^{2N+1} = \bigoplus_{j=0}^{2N}\C u_j &= \bigoplus_{j=0}^{N-1}\C u_j \oplus \C u_N \oplus \bigoplus_{N+1}^{2N} \C u_j\\
 & \simeq \bigoplus_{\ell=1}^{N} \C_{\ell} \oplus \C_0 \oplus \bigoplus_{\ell = 1}^{N} \C_{-\ell} \quad \text{ as } SO(2)\text{-modules.}
\end{aligned}
\end{equation}

On the other hand, the decomposition (\ref{decomposition-Harm-2}) is nothing but the decomposition of $\Harm^k(\C^2)$ as an $SO(2)$-module:
\begin{equation*}
\C(\zeta_1 - i\zeta_2)^k + \C(\zeta_1 + i\zeta_2)^k \simeq \C_{k} + \C_{-k}.
\end{equation*}
Hence
\begin{equation}\label{decomp-CmHk}
\C_m \otimes \Harm^k(\C^2) \simeq \begin{cases}
\C_m, &\text{if } k = 0,\\
\C_{m+k} \oplus \C_{m-k},&\text{if } k > 0.
\end{cases}
\end{equation}
Combining \eqref{decomp-V2N+1} and \eqref{decomp-CmHk} gives us
\begin{align}
\label{decomp-V2N+1CmH0}
(V^{2N+1})^\vee \otimes \C_m \otimes \Harm^0(\C^2) & \simeq 
\bigoplus_{\ell = 1}^N \C_{m-\ell} \oplus \C_m \oplus \bigoplus_{\ell = 1}^N \C_{m+\ell}\enspace (k = 0),\\
\begin{split}\label{decomp-V2N+1CmHk}
(V^{2N+1})^\vee \otimes \C_m \otimes \Harm^k(\C^2) & \simeq 
\bigoplus_{\ell = 1}^N (\C_{m+k-\ell}\oplus\C_{m-k-\ell}) \oplus \C_{m+k} \oplus \C_{m-k} \\
& \quad \oplus \bigoplus_{\ell = 1}
^N (\C_{m+k+\ell}\oplus\C_{m-k+\ell}) \enspace (k > 0).
\end{split}
\end{align}
The equivalence (i) $\Leftrightarrow$ (iii) follows from taking $SO(2)$-invariants in the decompositions above. In order to find the dimension of $\Hom_{SO(2)}\left(V^{2N+1}, \C_m\otimes \Harm^k(\C^2)\right)$ we make use of the condition $|m| \geq N$. If $k = 0 \in K_{N,m}$, then necessarily $m = \pm N$. In this case there is only one term that does not vanish when taking $SO(2)$-invariants in \eqref{decomp-V2N+1CmH0}, which is $\C_{m-N}$ if $m = -N$ and $\C_{m+N}$ if $m = N$.
Similarly, if $k > 0$ we note that the number of terms in \eqref{decomp-V2N+1CmHk} that are subject to remain when taking $SO(2)$-invariants is $2N+1$, one for each value of $k \in K_{N,m}$. But since $|m| \geq N$, we have $\#K_{N,m} = 2N+1$, so only one term remains for each $k \in K_{N,m}$. Therefore $\dim_\C \Hom_{SO(2)}\left(V^{2N+1}, \C_m\otimes \Harm^k(\C^2)\right) = 1$.

The generators $h_k^\pm$ can be obtained by a straightforward computation by taking a look at which terms do not vanish in the decompositions \eqref{decomp-V2N+1CmH0} and \eqref{decomp-V2N+1CmHk}.
\end{proof}

The sign index $\pm$ in $h_k^\pm$ is chosen to coincide with that of $m\pm\ell$, being $+$ if $m \geq N$ and $-$ if $m\leq -N$. This choice also matches with the sign of $(\zeta_1 \pm i\zeta_2)$.

Now, given $b\in \Z$ and $g \in \C[t]$, we define a meromorphic function $(T_bg)$ on the variables $\zeta = (\zeta_1, \zeta_2,\zeta_3)$ as follows:
\begin{equation}\label{Def-T_b}
(T_bg)(\zeta) := (\zeta_1^2+\zeta_2^2)^\frac{b}{2} g\left(\frac{\zeta_3}{\sqrt{\zeta_1^2+\zeta_2^2}}\right).
\end{equation}

Note that, if $b \in \N$ and $g \in \Pol_b[t]_\text{{\normalfont{even}}}$, then $T_bg$ is a homogeneous polynomial on three variables of degree $b$, where
\begin{equation}\label{def-Pol_even}
\Pol_b[t]_\text{{\normalfont{even}}}:= \spanned_\C\left\{t^{b-2j}: j = 0, \dots, \left[\frac{b}{2}\right]\right\}.
\end{equation}
Moreover, we obtain:
\begin{equation}\label{bijection-T_b}
T_b: \Pol_b[t]_\text{{\normalfont{even}}} \arrowsimeq \bigoplus_{2b_1+b_2 = b} \Pol^{b_1}[\zeta_1^2 + \zeta_2^2]\otimes\Pol^{b_2}[\zeta_3].
\end{equation}

\begin{lemma}[{\cite[Lem. 4.2]{kkp}}]\label{Harm-Pol-isomorphism} For any $a \in \N$, we have the following bijection.
\begin{equation*}
\begin{aligned}
\bigoplus_{k = 0}^a\Pol_{a-k}[t]_\text{{\normalfont{even}}} \otimes \Hom_{SO(2)}\left(V^{2N+1}, \C_m\otimes \Harm^k(\C^2)\right) &\arrowsimeq \Hom_{SO(2)}\left(V^{2N+1}, \C_m \otimes \Pol^a[\zeta_1, \zeta_2, \zeta_3]\right),\\
\sum_{k=0}^a g_k \otimes H_k &\longmapsto \sum_{k = 0}^a \left(T_{a-k}g_k\right)H_k.
\end{aligned}
\end{equation*}
\end{lemma}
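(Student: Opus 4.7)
The plan is to decompose the target $\Pol^a[\zeta_1,\zeta_2,\zeta_3]$ as an $SO(2)$-module into pieces of the form \textit{(trivial factor) $\otimes$ $\Harm^k(\C^2)$}, and then apply $\Hom_{SO(2)}\bigl(V^{2N+1}, \C_m\otimes\,\cdot\,\bigr)$ termwise.

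First I would exploit the action \eqref{action-SO(2)-on-Pol}, which rotates $(\zeta_1,\zeta_2)$ and fixes $\zeta_3$, to split off $\zeta_3$:
\begin{equation*}
\Pol^a[\zeta_1,\zeta_2,\zeta_3] = \bigoplus_{b_2=0}^{a}\Pol^{a-b_2}[\zeta_1,\zeta_2]\otimes\Pol^{b_2}[\zeta_3],
\end{equation*}
where the second tensor factor is $SO(2)$-trivial. Next, I would apply the classical harmonic decomposition in two variables, $\Pol^n[\zeta_1,\zeta_2]=\bigoplus_{2b_1+k=n}(\zeta_1^2+\zeta_2^2)^{b_1}\Harm^k(\C^2)$, which is $SO(2)$-equivariant since $\zeta_1^2+\zeta_2^2$ is $SO(2)$-invariant. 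Reorganizing the double sum by the harmonic degree $k$ gives
\begin{equation*}
\Pol^a[\zeta_1,\zeta_2,\zeta_3] \simeq \bigoplus_{k=0}^{a}\Biggl(\bigoplus_{2b_1+b_2=a-k}\Pol^{b_1}[\zeta_1^2+\zeta_2^2]\otimes\Pol^{b_2}[\zeta_3]\Biggr)\otimes\Harm^k(\C^2)
\end{equation*}
as $SO(2)$-modules, with the parenthesised factor being $SO(2)$-trivial.

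Second, since $\zeta_1^2+\zeta_2^2$ and $\zeta_3$ are $SO(2)$-fixed, $T_{a-k}g_k$ is $SO(2)$-invariant for every $g_k\in\Pol_{a-k}[t]_\text{even}$. Hence multiplication by $T_{a-k}g_k$ sends an $SO(2)$-equivariant $H_k\colon V^{2N+1}\to\C_m\otimes\Harm^k(\C^2)$ to an $SO(2)$-equivariant map into $\C_m\otimes\Pol^a[\zeta_1,\zeta_2,\zeta_3]$, so the assignment $\sum_k g_k\otimes H_k\mapsto\sum_k(T_{a-k}g_k)H_k$ is well-defined. Applying $\Hom_{SO(2)}\bigl(V^{2N+1},\C_m\otimes\,\cdot\,\bigr)$ to the displayed decomposition pulls the trivial factor outside the $\Hom$, and the bijection \eqref{bijection-T_b} identifies that factor with $\Pol_{a-k}[t]_\text{even}$ via $T_{a-k}$; this yields the claimed direct sum decomposition and proves the isomorphism.

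No single step is a real obstacle — the argument is essentially bookkeeping via the harmonic decomposition in two variables. The only care required is to keep degrees consistent: each summand in the final decomposition has $T_{a-k}g_k$ of degree $a-k$ and $H_k$ valued in harmonics of degree $k$, so the total polynomial degree is $a$, matching the target space. One could equivalently verify injectivity directly from the classical fact that multiplication $\Pol^j[\zeta_1^2+\zeta_2^2]\otimes\Harm^k(\C^2)\to\Pol^{2j+k}[\zeta_1,\zeta_2]$ is injective, but the decomposition above packages all of this at once.
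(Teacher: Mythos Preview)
Your proof is correct and follows essentially the same route as the paper: combine the classical harmonic decomposition $\Pol[\zeta_1,\zeta_2]\simeq\Pol[\zeta_1^2+\zeta_2^2]\otimes\Harm(\C^2)$ with the bijection \eqref{bijection-T_b} for $T_{a-k}$. The paper records this in one line (citing \cite[Lem.~4.2]{kkp}), while you have unpacked the bookkeeping explicitly; the underlying argument is identical.
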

This result comes from combining (\ref{bijection-T_b}) with the natural decomposition
\begin{equation*}
\begin{gathered}
\Pol[\zeta_1^2 + \cdots + \zeta_d^2] \otimes \Harm(\C^d) \arrowsimeq \Pol[\zeta_1, \ldots, \zeta_d],
\end{gathered}
\end{equation*}
where $\Harm(\C^d) := \bigoplus_{k=0}^\infty \Harm^k(\C^d)$.
\begin{prop}\label{prop-FirststepFmethod} Let $\lambda, \nu \in \C$, $N \in \N$, $m \in \Z$ with $|m| \geq N$ and $a \in \N$. Then, the following two conditions on the tuple $(\lambda, \nu, N, m, a)$ are equivalent.
\begin{enumerate}[label=\normalfont{(\roman*)}, topsep=3pt]
\item $\Hom_{L^\prime}\left(V^{2N+1}_\lambda, \C_{m, \nu} \otimes \Pol^a(\n_+)\right) \neq \{0\}$.
\item $a = \nu - \lambda$ and $a \geq \min K_{N,m}$.
\end{enumerate}
Moreover, if one (therefore all) of the conditions above is satisfied, we have
\begin{equation*}
\begin{gathered}
\Hom_{L^\prime}\left(V^{2N+1}_\lambda, \C_{m, \nu} \otimes \Pol^a(\n_+)\right)= 
\spanned_\C\{\left(T_{a-k}g_k\right)h_k^{\pm}: g_k \in \Pol_{a-k}[t]_\text{{\normalfont{even}}}, k\in K_{N,m}\},
\end{gathered}
\end{equation*}
Here, we regard $\Pol_{d}[t]_\text{{\normalfont{even}}} = \{0\}$ for $d < 0$.
\end{prop}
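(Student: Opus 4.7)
The plan is to exploit the Langlands decomposition $L^\prime = M^\prime A \simeq SO(2)\times A$, which lets us treat the conditions imposed by $M^\prime$ and $A$ separately. First I would use the $A$-equivariance of (\ref{F-system-1-concrete}) to pin down the degree $a$. Under the action described in (\ref{action-SO(2)-on-Pol}), the element $\exp(tH_0)\in A$ acts on $V^{2N+1}_\lambda$ by $e^{\lambda t}$, on $\C_{m,\nu}$ by $e^{\nu t}$, and on $\Pol^a(\n_+)$ by $e^{-at}$. Plugging these scalars into (\ref{F-system-1-concrete}) forces $\lambda = \nu - a$, that is $a = \nu - \lambda$; in particular no $L^\prime$-homomorphism exists unless $\nu-\lambda$ is a non-negative integer equal to $a$.

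Next I would reduce the $SO(2)$-equivariance to a computation involving only harmonic polynomials. Since $SO(2)$ acts on $\Pol(\n_+)$ through its natural action on $(\zeta_1,\zeta_2,\zeta_3)$ (fixing $\zeta_3$), Lemma \ref{Harm-Pol-isomorphism} applied to $V^{2N+1}$ in place of the trivial module yields
\begin{equation*}
\Hom_{SO(2)}\!\left(V^{2N+1},\C_m\otimes \Pol^a(\n_+)\right) \simeq \bigoplus_{k=0}^{a} \Pol_{a-k}[t]_\text{even}\otimes \Hom_{SO(2)}\!\left(V^{2N+1},\C_m\otimes \Harm^k(\C^2)\right),
\end{equation*}
the isomorphism sending $\sum g_k\otimes H_k$ to $\sum(T_{a-k}g_k)H_k$. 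Under the assumption $|m|\geq N$, Lemma \ref{lemma-Harm} shows that the $k$-th summand on the right is non-zero exactly when $k\in K_{N,m}$, in which case it is one-dimensional with generator $h_k^{\pm}$.

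Combining the two steps, the space in (i) is non-zero if and only if $a=\nu-\lambda$ and some $k\in K_{N,m}$ satisfies $k\leq a$ (so that $\Pol_{a-k}[t]_\text{even}$ is non-zero). Since $K_{N,m}\subset \N$ is finite, this last condition is just $a\geq \min K_{N,m}$, establishing (i)$\Leftrightarrow$(ii). When these conditions hold, the explicit description of the space follows by pushing the generators $h_k^\pm$ through the isomorphism of Lemma \ref{Harm-Pol-isomorphism}, which gives precisely the span of the elements $(T_{a-k}g_k)h_k^{\pm}$ as $g_k$ ranges over $\Pol_{a-k}[t]_\text{even}$ and $k$ over $K_{N,m}$.

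The arguments are essentially a packaging of the two preceding lemmas, so I do not expect a serious obstacle; the only point needing care is keeping track of which summands in the $SO(2)$-decomposition survive, which is precisely where the hypothesis $|m|\geq N$ enters through the cardinality $\#K_{N,m}=2N+1$ used in Lemma \ref{lemma-Harm}. A subtle bookkeeping issue is to confirm that for each admissible $k$ the factor $\Pol_{a-k}[t]_\text{even}$ is non-zero, i.e.\ $a-k\geq 0$, which is guaranteed once $a\geq \min K_{N,m}$ for the smallest such $k$ and is vacuously compatible with larger $k\leq a$.
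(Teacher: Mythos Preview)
Your proposal is correct and follows essentially the same approach as the paper: first use the $A$-equivariance from (\ref{action-SO(2)-on-Pol}) to force $a=\nu-\lambda$, then reduce to the $SO(2)$-part and apply Lemmas \ref{lemma-Harm} and \ref{Harm-Pol-isomorphism} to obtain both the non-vanishing criterion and the explicit generators. The paper's proof is terser---it simply notes that after the $A$-step ``the result is clear from Lemmas \ref{lemma-Harm} and \ref{Harm-Pol-isomorphism}''---but you have spelled out exactly the argument it has in mind.
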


\begin{proof}
By (\ref{action-SO(2)-on-Pol}), the action of $A$ on $\Pol^a(\n_+)$ is given by:
\begin{equation*}
A \times \Pol^a(\n_+) \rightarrow \Pol^a(\n_+), \enspace (\exp(tH_0), p) \mapsto e^{-ta}p.
\end{equation*}
Thus, (i) is satisfied if and only if $e^{\lambda t} = e^{(\nu - a)t}$ for all $t\in \R$, or equivalently, if and only if $a = \nu-\lambda$. The proof of the proposition reduces now to the action of $SO(2)$, in which case the result is clear from Lemmas \ref{lemma-Harm} and \ref{Harm-Pol-isomorphism}.
\end{proof}

\section{Step 2: Solving the Differential Equation (\ref{F-system-2-concrete}), case $m \geq N$}\label{section-step2}
In this section, we examine the second step of the F-method for the case $m \geq N$. The case $m \leq -N$ can be proved to be deduced from the first one (see Section \ref{section-case_m_lessthan_-N}). 

Although we will focus mainly on the case $m = N$ and prove Theorems \ref{mainthm1} and \ref{mainthm2} in this section, we also prove some results that hold in general for $m \geq N$. In particular, we give a description of $\Sol(\n_+; \sigma_\lambda^{2N+1}, \tau_{m, \nu})$ in terms of the solution of a system of ordinary differential equations (see Theorem \ref{Thm-findingequations}). Since the way of solving this system is considerably different in the cases $m = N$ and $m >N$, in this paper we solve this system for $m = N$ and cover the case $m > N$ in another paper we expect to publish soon. 

\subsection{Description of $\Sol(\n_+; \sigma_\lambda^{2N+1}, \tau_{N, \nu})$ and proof of Theorems \ref{mainthm1} and \ref{mainthm2} for $m = N$}\label{section-proof-of-main-thms} 
Step 2 aims to characterize the space $\Sol(\n_+; \sigma_\lambda^{2N+1}, \tau_{m, \nu})$. By Proposition \ref{prop-FirststepFmethod}, this amounts to solving the system of partial differential equations (F-system) below
\begin{equation*}
\left(\widehat{d\pi_\mu}(C)\otimes \id_{\C_{m, \nu}}\right)\psi = 0 \quad (\forall C\in \n_+^\prime),
\end{equation*}
where  $\psi = \sum_{k \in K_{N, m}}\left(T_{\nu-\lambda-k}g_k\right)h_k^{\pm}$,  and  $g_k \in \Pol_{\nu-\lambda-k}[t]_\text{{\normalfont{even}}}$.
The next result gives a complete description of $\Sol(\n_+; \sigma_\lambda^{2N+1}, \tau_{m, \nu})$ for $m = N$.

\begin{thm}\label{Thm-step2} The following three conditions on the triple $(\lambda, \nu, N) \in \C^2 \times \N$ are equivalent.
\begin{enumerate}[label=\normalfont{(\roman*)}, topsep=3pt]
\item $\Sol(\n_+; \sigma_\lambda^{2N+1}, \tau_{N, \nu}) \neq \{0\}$.
\item $\dim_\C \Sol(\n_+; \sigma_\lambda^{2N+1}, \tau_{N, \nu}) = 1.$
\item $\nu - \lambda \in \N$.
\end{enumerate}
Moreover, if one of the above (therefore all) conditions is satisfied, then
\begin{equation*}
\Sol(\n_+; \sigma_\lambda^{2N+1}, \tau_{N, \nu}) = \C\sum_{k = 0}^{2N}\left(T_{a -k}g_k\right)h_k^+,
\end{equation*}
where $a := \nu - \lambda$ and the polynomials $g_{k}(t) \enspace (k = 0, \dots, 2N)$ are given as follows:
\begin{equation}\label{solution-all}
g_k(t) = i^{N+k}A_k \Geg_{a-k}^{\lambda+N-1}(it).
\end{equation}
Here, $\Geg_\ell^\mu(z)$ denotes the renormalized Gegenbauer polynomial (see \eqref{Gegenbauer-polynomial(renormalized)}), and we regard $\Geg_d^\mu \equiv 0$ for $d < 0$. The constant $A_k$ is defined as in \eqref{const-A}.
\end{thm}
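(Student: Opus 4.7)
The plan is to combine Step 1 (Proposition \ref{prop-FirststepFmethod}) with the F-system (\ref{F-system-2-concrete}). For $m = N$ one checks directly that $K_{N,N} = \{0, 1, \dots, 2N\}$ and $\min K_{N,N} = 0$, so Proposition \ref{prop-FirststepFmethod} forces every nonzero candidate $\psi$ to have $a := \nu - \lambda \in \N$ and admit a unique expansion $\psi = \sum_{k=0}^{2N}(T_{a-k}g_k)h_k^+$ with $g_k \in \Pol_{a-k}[t]_\text{{\normalfont{even}}}$, where by convention $g_k = 0$ whenever $a-k < 0$. This already yields (iii) as a necessary condition for (i) and reduces the remainder of the theorem to establishing, under $\nu - \lambda \in \N$, that the F-system admits a one-dimensional family of solutions, given explicitly by (\ref{solution-all}).

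To carry this out I would proceed in two stages, matching the division announced between Theorems \ref{Thm-findingequations} and \ref{Thm-solvingequations}. First I would impose $(\widehat{d\pi_\mu}(C_j^+) \otimes \id_{\C_{N,\nu}})\psi = 0$ for $j = 1, 2$, using the closed formula for $\widehat{d\pi_\mu}$ from \cite[Eq. (3.13)]{kob-pev1}. Since $\n_+$ is abelian the operator $\widehat{d\pi_\mu}(C_j^+)$ is of order two in $(\zeta_1, \zeta_2, \zeta_3)$; applying it to $(T_{a-k}g_k)(\zeta_1 + i\zeta_2)^k$ and exploiting the clean differential identities satisfied by $(\zeta_1 \pm i\zeta_2)^k$ and by the transform $T_b$, one re-expands the result in the harmonic basis via Lemma \ref{Harm-Pol-isomorphism} and reads off the coefficient of each $h_{k'}^+$. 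The outcome is expected to be a coupled, essentially tridiagonal, system of ODEs in $t$ for the polynomials $g_0, \dots, g_{2N}$, relating neighboring $g_{k-1}, g_k, g_{k+1}$; this is the content of Theorem \ref{Thm-findingequations}.

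Second, I would solve this ODE system by substituting the ansatz $g_k(t) = i^{N+k}A_k \Geg_{a-k}^{\lambda+N-1}(it)$. Using the recurrence and differentiation formulas for the renormalized Gegenbauer polynomials collected in the appendix, each equation should collapse into an algebraic identity on the ratios $A_{k+1}/A_k$, and these ratios are precisely what forces the piecewise definition (\ref{const-A}). Uniqueness would then follow from the tridiagonal structure: modulo the degree bound $\deg g_k \leq a - k$, the system propagates $g_0$ to determine all other $g_k$, and one terminal consistency relation must be checked to be satisfied automatically. The main obstacle is the case split in $A_k$ between $0 \leq \nu - \lambda \leq N$ and $\nu - \lambda > N$: this split arises because when $a \leq N$ the polynomials $g_k$ must vanish for $k > a$ (so the recursion is truncated from the top and normalization from $g_0$ is natural), whereas when $a > N$ none of them vanishes and the natural normalization runs from $g_{2N}$; reconciling the two regimes, handling the degenerate Gegenbauer cases $\Geg_d^\mu \equiv 0$ for $d < 0$, and avoiding spurious singularities in the $\Gamma$-quotients at integer values of $\lambda + N - 1$, is the delicate point of Theorem \ref{Thm-solvingequations}.
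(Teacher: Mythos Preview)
Your outline is correct and matches the paper's two-stage architecture: Proposition \ref{prop-FirststepFmethod} forces $a=\nu-\lambda\in\N$ and the expansion in the $h_k^+$, after which Theorem \ref{Thm-findingequations} converts the single equation $(\widehat{d\pi_\mu}(C_1^+)\otimes\id)\psi=0$ (one generator suffices, since $L'$ acts irreducibly on $\n_+'$; you need not impose $j=2$ separately) into a coupled ODE system in the $g_k$, and Theorem \ref{Thm-solvingequations} solves it.

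Where you diverge from the paper is in the solving step. You propose (a) to verify the ansatz \eqref{solution-all} directly and (b) to prove uniqueness by one-sided propagation from $g_0$. The paper instead solves the system constructively by \emph{two-sided} propagation: with $f_{\pm j}:=g_{N\mp j}$, the extremal equations $(A_N^\pm)$ are uncoupled Gegenbauer equations that pin down $f_{\pm N}$ up to scalars $q_N^\pm$; one then descends from both ends via $(B_j^\pm)$ toward $f_0$, and a compatibility check at the center (Phase~3) forces a linear relation between $q_N^+$ and $q_N^-$, cutting the dimension from at most $2$ to at most $1$. Your ``one terminal consistency relation'' is precisely this Phase~3, but it is not automatic: it is what produces the normalization relation \eqref{relation-q_N-pm} and hence uniqueness. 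A one-sided propagation from $g_0$ is possible in principle, but the natural starting points are the two uncoupled equations at $j=N$, and the turn at the center would re-introduce constants of integration that you would have to kill using the remaining $(A_j^-)$.

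Finally, the ``spurious singularities'' you flag are not a small wrinkle. The paper spends Lemmas \ref{lemma-Gamma+zero} and \ref{lemma-Gamma-zero} on the cases $\Gamma_0^\pm=0$, where some $\gamma(\lambda+N-1,\cdot)$ vanishes; in the minus branch the integration constants $c_{N-s-2k}^-$ cannot be eliminated until Phase~3, and one must track an auxiliary polynomial $P_{-j}$ through the recursion. Your ansatz-verification route would still face the same case analysis when checking that the Gegenbauer identities hold at those degenerate parameter values and that no extra solutions appear there.
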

We will discuss the proof of Theorem \ref{Thm-step2} in Section \ref{section-reductiontheorems}. Using Theorem \ref{Thm-step2}, we first establish proofs of Theorems \ref{mainthm1} and \ref{mainthm2}.

\begin{proof}[Proof of Theorem \ref{mainthm1} for $m =N$]
From (\ref{isomorphism-Fmethod-concrete}) we know that there exists a linear isomorphism
\begin{equation*}
\begin{tikzcd}[column sep = 1cm]
\Diff_{G^\prime}\left(C^\infty(S^3, \V_\lambda^{2N+1}), C^\infty(S^2, \L_{N,\nu})\right) \arrow[r, "\displaystyle{_{\Symb \otimes \id}}", "\thicksim"'] & \Sol(\n_+; \sigma_\lambda^{2N+1}, \tau_{N, \nu}).
\end{tikzcd}
\end{equation*}
Theorem \ref{mainthm1} follows now from Theorem \ref{Thm-step2}.
\end{proof}

\begin{proof}[Proof of Theorem \ref{mainthm2} for $m = N$]
By \eqref{isomorphism-Fmethod-concrete} and Theorem \ref{Thm-step2}, it is enough to compute the inverse of the symbol map $\Symb \otimes \id$ of $\psi = \sum_{k = 0}^{2N}\left(T_{a-k}g_k\right)(\zeta)h_k^+$, where $g_k$ are the polynomials given in \eqref{solution-all} and $h_k^+$ are defined in \eqref{generators-Hom(V2N+1,CmHk)}.

A straightforward computation using \eqref{Def-T_b} gives us
\begin{equation*}
\begin{array}{ll}
\left(T_{a-k}g_{k}\right)(\zeta_1, \zeta_2, \zeta_3)&=(-1)^\frac{a-k}{2}i^{N+k}A_k\left(I_{a-k}\Geg_{a-k}^{\lambda+N-1}\right)\left(-(\zeta_1^2 + \zeta_2^2), \zeta_3\right)\\
& = (-1)^\frac{a+N}{2}A_k\left(I_{a-k}\Geg_{a-k}^{\lambda+N-1}\right)\left(-(\zeta_1^2 + \zeta_2^2), \zeta_3\right),
\end{array}
\end{equation*}
where $(I_b\Geg_\ell^\mu)(x,y) := x^{\frac{b}{2}}\Geg_\ell^\mu\left(\frac{y}{\sqrt{x}}\right)$. Hence, by \eqref{generators-Hom(V2N+1,CmHk)} and by dividing each coordinate by $(-1)^\frac{a+N}{2}$, we obtain that the generator of (\ref{DSBO-space}) is
\begin{equation*}
\sum_{k=0}^{2N}A_k \widetilde{\C}_{\lambda+N, \nu+N-k} \left(\frac{\partial}{\partial x_1} + i \frac{\partial}{\partial x_2}\right)^k \otimes u_{k}^\vee
\end{equation*}
where
\begin{equation*}
\begin{gathered}
\widetilde{\C}_{\lambda, \nu} := \Rest_{x_3 = 0} \circ \left(I_{\nu-\lambda} \widetilde{C}^{\lambda-1}_{\nu-\lambda}\right)\left(-\Delta_{\R^2}, \frac{\partial}{\partial x_3}\right).
\end{gathered}
\end{equation*}
 
Multiplying by $2^k$ and using the identities below, which arise from the change of variables $z = x_1 + ix_2$ in the generator above, give us the expression (\ref{Operator-general+}) of the operator $\D_{\lambda, \nu}^{N, N}$.
\begin{equation*}
2 \frac{\partial}{\partial \overline{z}} = \left(\frac{\partial}{\partial x_1} + i\frac{\partial}{\partial x_2}\right), \quad 4\frac{\partial^2}{\partial z \partial \overline{z}} = \Delta_{\R^2}.
\end{equation*}
\end{proof}

\subsection{Reduction Theorems}\label{section-reductiontheorems}
Theorem \ref{Thm-step2} will be divided into to reduction theorems: Theorem \ref{Thm-findingequations} (finding equations), and Theorem \ref{Thm-solvingequations} (solving equations). We present them in this subsection and provide the proofs in the following sections. From now on, we assume $a = \nu - \lambda\in \N$, $N\in \N$ and $m \geq N$.

We recall the definition of the \textit{imaginary Gegenbauer differential operator} $S_\ell^\mu$, which is given as follows for any $\mu \in \C$ and any $\ell \in \N$ (see \eqref{Gegen-imaginary}):
\begin{equation*}
S_\ell^\mu = - (1 + t^2) \frac{d^2}{dt^2} - (1 + 2\mu)t\frac{d}{dt} + \ell(\ell + 2\mu).
\end{equation*}
Now, for $f_0, f_{\pm 1}, f_{\pm 2} \ldots, f_{\pm N}  \in \C[t]$ set $\mathbf{f} := {}^t(f_{-N}, \ldots, f_0, \ldots, f_N)$ and let $L_j^{A, \pm}(\mathbf{f})(t)$ and $L_j^{B, \pm}(\mathbf{f})(t)$  $(j = 0, \dots, N)$ be the following polynomials:
\begin{equation}\label{expression-operators-L-AB}
\begin{array}{l}
\begin{rcases*}
L_j^{A, +}(\mathbf{f})(t) := S_{a+m-j}^{\lambda+j-1} f_j - 2(N-j)\frac{d}{dt}f_{j+1}.\\[6pt]

L_j^{A, -}(\mathbf{f})(t) := S_{a-m-j}^{\lambda+j-1} f_{-j} + 2(N-j)\frac{d}{dt}f_{-j-1}.
\end{rcases*} (j = 0, 1, \ldots, N)\\[30pt]

\begin{rcases*}
L_j^{B, +}(\mathbf{f})(t) := 2(-m(\lambda + a-1) + j(\lambda-1 + \vartheta_t))f_j \\
\hspace*{\fill}+ (N-j)\frac{d}{dt}f_{j+1} +(N+j) \frac{d}{dt}f_{j-1}.\\[6pt]

L_j^{B, -}(\mathbf{f})(t) := 2(m(\lambda + a-1) + j(\lambda-1 + \vartheta_t))f_{-j}\\
\hspace*{\fill} - (N+j)\frac{d}{dt}f_{-j+1} -(N-j) \frac{d}{dt}f_{-j-1}.
\end{rcases*} (j = 1, \ldots, N)
\end{array}
\end{equation}
Here, $\vartheta_t$ stands for the Euler operator $\vartheta_t := t\frac{d}{dt}$. For $j=0$, we set $L_0^{B,\pm} \equiv 0$.

Given $\mathbf{g} = (g_{k_d})_{d = 0}^{2N} = {}^t(g_{k_0}, \ldots, g_{k_{2N}})$ with $g_{k_d} \in \C[t]$, we write 
\begin{equation}\label{def-g-prime}
\widetilde{\mathbf{g}} = {}^t(\widetilde{g}_{-N}, \ldots, \widetilde{g}_N) := \begin{pmatrix}
& & 1\\
& \iddots &\\
1 & &
\end{pmatrix}\mathbf{g} = {}^t(g_{k_{2N}}, \ldots, g_{k_0}).
\end{equation}
Now, we have

\begin{thm}[\textbf{Finding equations for $m \geq N$}]\label{Thm-findingequations} Let $N \in \N$, $m \in \Z$ with $m \geq N$ and set $\psi = \sum_{k = m-N}^{m+N}\left(T_{a-k}g_k\right)h_k^+$. Then, the following two conditions on $\mathbf{g} := (g_k)_{k = m-N}^{m+N}$ are equivalent:
\begin{enumerate}[label=\normalfont{(\roman*)}, topsep=2pt]
\setlength{\itemsep}{3pt}
\item $\left(\widehat{d\pi_\mu}(C)\otimes \id_{\C_{m,\nu}}\right)\psi = 0$, for all $C \in \n_+^\prime$.
\item $L_j^{A, \pm}(\widetilde{\mathbf{g}}) = L_j^{B, \pm}(\widetilde{\mathbf{g}}) = 0$, for all $j = 0, \dots, N$.
\end{enumerate}
\end{thm}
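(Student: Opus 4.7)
The plan is to make the F-system \eqref{F-system-2-concrete} fully explicit by combining the closed formula for the Fourier-transformed infinitesimal action $\widehat{d\pi_\mu}(C)$ (the algebraic Fourier transform of \cite[Eq.~(3.13)]{kob-pev1}) with the factorization of $\psi$ provided by Lemma \ref{Harm-Pol-isomorphism} and Proposition \ref{prop-FirststepFmethod}, and then to read off the ODEs via the uniqueness of the harmonic/radial decomposition of $\Pol(\n_+)$. Since $\n_+^\prime = \spanned_\R\{C_1^+, C_2^+\}$, I would first pass to the complex combinations $C_\pm := C_1^+ \pm i C_2^+$, which diagonalise the $SO(2)$-action and pair cleanly with the harmonics $(\zeta_1 \pm i\zeta_2)^k$ appearing in the generators $h_k^+$ of \eqref{generators-Hom(V2N+1,CmHk)}.

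First, I would decompose $\widehat{d\pi_\mu}(C_\pm)\otimes\id_{\C_{m,\nu}}$ into second-order, first-order and zeroth-order parts in $(\partial_{\zeta_1},\partial_{\zeta_2},\partial_{\zeta_3})$, with coefficients acting on $V^{2N+1}$ via $\sigma^{2N+1}$ and on the weight by $\lambda$; using \eqref{brackets-Cpm} (which records how $C_j^+$ brackets with $C_k^-$ to produce $X_{j,k}$ and $H_0$) the second-order part is manifestly built out of $\partial_{\zeta_j}\partial_{\zeta_k}$. Applied to a single summand $(T_{a-k}g_k)(\zeta)\,h_k^+(\zeta)$, the second-order part sees only the radial factor $T_{a-k}g_k$, because every harmonic $(\zeta_1 \pm i\zeta_2)^k$ is annihilated by $\Delta_{\R^2}$; after the change of variable $t = \zeta_3/\sqrt{\zeta_1^2+\zeta_2^2}$ from \eqref{Def-T_b}, a direct computation in the spirit of \cite[\S 4]{kkp} rewrites the result as the imaginary Gegenbauer operator $S_\ell^\mu$ acting on $g_k$, with the parameters $(\ell,\mu)$ dictated by the degree $a-k$ and the weight $\lambda$. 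This supplies the $S_\ell^\mu$-term of $L_j^{A,\pm}$. The first- and zeroth-order parts of $\widehat{d\pi_\mu}(C_\pm)$ then produce two kinds of coupling across neighbouring $k$-components: $\partial_{\zeta_3}T_{a-k}$ becomes a $\frac{d}{dt}$ that links $g_k$ with $g_{k\pm 1}$ and yields the $2(N-j)\frac{d}{dt}f_{j+1}$ and $(N\pm j)\frac{d}{dt}f_{j\mp 1}$ pieces of $L_j^{A,\pm}$ and $L_j^{B,\pm}$, while $(\partial_{\zeta_1}\pm i\partial_{\zeta_2})(\zeta_1\pm i\zeta_2)^k$ together with the $M$-shift $u_{N+\ell}\mapsto u_{N+\ell\pm 1}$ assembles into the coefficient $-m(\lambda+a-1)+j(\lambda-1+\vartheta_t)$ appearing in $L_j^{B,\pm}$.

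Finally, by Lemma \ref{Harm-Pol-isomorphism} the decomposition of $\C_{m,\nu}\otimes\Pol(\n_+)$ into pieces of the form $\Pol_{a-k^\prime}[t]_{\text{{\normalfont{even}}}}\otimes \Harm^{k^\prime}(\C^2)$ is unique, so the vanishing of $\bigl(\widehat{d\pi_\mu}(C_\pm)\otimes\id\bigr)\psi$ is equivalent to the simultaneous vanishing of each such isotypic component, for each basis vector $u_{N+\ell}$ of $V^{2N+1}$. Sorting the contributions of $C_+$ and $C_-$ and relabelling via the reversed vector $\widetilde{\mathbf{g}}$ of \eqref{def-g-prime} then reproduces exactly the two families $L_j^{A,\pm}(\widetilde{\mathbf{g}})=0$ and $L_j^{B,\pm}(\widetilde{\mathbf{g}})=0$ for $j=0,\dots,N$; the boundary conventions $L_0^{B,\pm}\equiv 0$ and the vanishing of the factors $(N\pm j)$ at $j=N$ are automatic, since the corresponding neighbours $g_{k\pm 1}$ either lie outside the range $K_{N,m}$ or are absent from $\psi$. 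The main obstacle, in my view, is the careful bookkeeping of the first- and zeroth-order contributions: three independent sources — the $\zeta_3$-derivative of $T_{a-k}$, the transverse derivatives of the harmonic factor, and the $\sigma^{2N+1}$-shift acting on $u_{N+\ell}$ — must be collapsed into the compact form of $L_j^{B,\pm}$, and one has to check in particular that the homogeneity degree $a-k$ of $T_{a-k}g_k$ combines with the $H_0$-weight and the $\lambda$-shift to produce exactly the coefficient $-m(\lambda+a-1)+j(\lambda-1+\vartheta_t)$ of \eqref{expression-operators-L-AB}.
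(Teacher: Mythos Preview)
Your strategy of passing to the complexified generators $C_\pm = C_1^+ \pm iC_2^+$ is a legitimate alternative to the paper's route. The paper works with $C_1^+$ alone, relying on the irreducibility of $\n_+^\prime$ under $L^\prime$; it then decomposes $\widehat{d\pi_\mu}(C_1^+)$ into a \emph{scalar part} and a \emph{vector part} (rather than by differential order), computes each vector coefficient $M_s(\psi)$ explicitly via the $T$-saturation formulas of Lemma~\ref{lemma-Tsaturation-formulas}, and then invokes the $O(2,\C)$-invariance trick of Lemma~\ref{lemma-invariant-polynomials} to split the single condition $M_s(\psi)=0$ into \emph{two} ODEs, denoted $R_k^1(\mathbf{g})=0$ and $R_k^2(\mathbf{g})=0$. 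Your diagonalisation via $C_\pm$ would replace this $O(2,\C)$-invariance step by a direct weight-by-weight decomposition, which is arguably cleaner; both routes should yield the same number of raw scalar equations.

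There is, however, a real gap in your final step. The Gegenbauer operator that emerges from the scalar part --- whether via $C_1^+$ or via $C_\pm$ --- is $S_{a-k}^{\lambda-1}$, with upper parameter $\lambda-1$: this is forced by the $T$-saturation identity $T_\ell^\sharp\bigl(\frac{Q_2(\zeta^\prime)}{\zeta_1}\widehat{d\pi_{\lambda^*}}(C_1^+)\bigr)=S_\ell^{\lambda-1}$ of Lemma~\ref{lemma-Tsaturation-formulas}(1), and the scalar part knows only $\lambda$, not $j$. But the operators $L_j^{A,\pm}$ in \eqref{expression-operators-L-AB} carry the \emph{shifted} parameter $\lambda+j-1$ and a different lower index. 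In the paper this shift is produced in Lemma~\ref{lemma-equivalence-Rk-Lj} by taking specific linear combinations of $R_k^1$ and $R_k^2$ (namely $2R_k^2-R_k^1$ and $R_k^1-R_k^2$) and then repeatedly applying the identity $S_\ell^\mu - S_\ell^{\mu+d} = 2d(\vartheta_t-\ell)$ of \eqref{Slmu-identity-1} to absorb the first-order contributions into the Gegenbauer operator. Your proposal asserts that ``sorting the contributions of $C_+$ and $C_-$ \ldots\ reproduces exactly the two families $L_j^{A,\pm}=0$ and $L_j^{B,\pm}=0$'', but this cannot happen without that recombination step: the vector-part couplings and the cross terms from the Leibniz rule (which you lump under ``careful bookkeeping'') are precisely what must be merged with $S_{a-k}^{\lambda-1}$ via \eqref{Slmu-identity-1} to reach $S_{a\pm m-j}^{\lambda+j-1}$. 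You should either carry out this recombination explicitly, or else state the equivalence at the level of the raw system (the analogue of $R_k^1=R_k^2=0$) and prove its equivalence with \eqref{expression-operators-L-AB} as a separate lemma.
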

We define
\begin{equation}\label{equation-space}
\Xi(\lambda, a, N, m) := \left\{(g_k)_{k=m-N}^{m+N} \in \bigoplus_{k = m-N}^{m+N} \Pol_{a-k}[t]_\text{{\normalfont{even}}} : \begin{array}{l}
L_j^{A,\pm}(\widetilde{\mathbf{g}}) = 0\\[4pt]
L_j^{B,\pm}(\widetilde{\mathbf{g}}) = 0\\[4pt]
\forall j = 0, \dots, N
\end{array}
\right\}.
\end{equation}
Then, we obtain
\begin{thm}[\textbf{Solving equations for $m = N$}]\label{Thm-solvingequations} Let $m = N$ and let $g_k \in \Pol_{a-k}[t]_\text{even}$ for $k = 0, 1, \ldots, 2N$. Then, the system $\Xi(\lambda, a, N, N)$ has, up to scalar multiple, a unique non-zero solution $(g_k)_{k = 0}^{2N}$ and it is given by \eqref{solution-all}.
\end{thm}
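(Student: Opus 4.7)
My plan is to prove uniqueness and existence of the solution simultaneously by treating the chain $L_N^{A,+}, L_{N-1}^{A,+}, \ldots, L_0^{A,+}$ as a descending recursion. The top equation $L_N^{A,+}(\widetilde{\mathbf{g}}) = S_a^{\lambda+N-1}\widetilde{g}_N = 0$ forces $\widetilde{g}_N = g_0$ into the one-dimensional space $\ker S_a^{\lambda+N-1} \cap \Pol_a[t]_{\mathrm{even}}$, which is spanned by $\Geg_a^{\lambda+N-1}(it)$ (this is the very ODE defining the renormalized Gegenbauer polynomial, written along the imaginary axis). For $j = N-1, N-2, \ldots, 0$, the equation $L_j^{A,+}$ reads
\[
S_{a+N-j}^{\lambda+j-1}\widetilde{g}_j \;=\; 2(N-j)\,\frac{d}{dt}\widetilde{g}_{j+1},
\]
and since $\widetilde{g}_j \in \Pol_{a-N+j}[t]_{\mathrm{even}}$ has degree strictly below $a+N-j$, the operator $S_{a+N-j}^{\lambda+j-1}$ acts invertibly on this subspace (its kernel sits in degree exactly $a+N-j$). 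Hence $\widetilde{g}_j$ is uniquely determined from $\widetilde{g}_{j+1}$; a symmetric argument using the chain $L_j^{A,-}$ propagates the solution from $\widetilde{g}_0$ upward to $\widetilde{g}_{-1}, \ldots, \widetilde{g}_{-N}$, yielding $\dim_\C \Xi(\lambda, a, N, N) \leq 1$.

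Next, I would verify that the candidate \eqref{solution-all} indeed solves every equation in the system. Substituting $\widetilde{g}_j = i^{2N-j}A_{N-j}\Geg_{a-N+j}^{\lambda+N-1}(it)$ into $L_j^{A,\pm}$ reduces the task to an identity of the shape
\[
S_{a+N-j}^{\lambda+j-1}\,\Geg_{a-N+j}^{\lambda+N-1}(it) \;=\; c_j\,\frac{d}{dt}\Geg_{a-N+j+1}^{\lambda+N-1}(it),
\]
with an explicit constant $c_j$; this should follow from the standard differentiation formula and the contiguous-parameter recurrences of the renormalized Gegenbauer polynomials collected in the appendix. Matching $c_j$ against the factor $2(N-j)$ in $L_j^{A,+}$ dictates a prescribed ratio $A_{N-j-1}/A_{N-j}$, and one then checks that the piecewise formula \eqref{const-A} realises exactly this ratio. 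The case split in \eqref{const-A} between $\nu-\lambda \leq N$ and $\nu-\lambda > N$ is precisely what tracks the jumps of the integer part $[(\nu-\lambda-k+1)/2]$ as $k$ crosses the threshold $k = \nu-\lambda$.

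Finally, for the $L_j^{B,\pm}$ equations the Euler operator $\vartheta_t$ combines with the degree of $\widetilde{g}_j$ through an identity of the form $t\tfrac{d}{dt}\Geg_\ell^\mu(it) = \alpha\,\Geg_\ell^\mu(it) + \beta\,\Geg_{\ell-2}^{\mu}(it) + \cdots$, coming from the three-term Gegenbauer recurrence; together with the neighbour contributions $\tfrac{d}{dt}\widetilde{g}_{j\pm 1}$, the equation collapses to a finite numerical identity among the constants $A_k$. I expect the main obstacle to be precisely this bookkeeping: the formula \eqref{const-A} is calibrated so that the ratios imposed by $L^{A,\pm}$ are simultaneously compatible with those imposed by $L^{B,\pm}$, and this compatibility boils down to a Gamma-function identity whose cleanest statement is piecewise. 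The appendix on renormalized Gegenbauer polynomials is designed to provide the exact recurrences needed; once they are in hand, the verification is a nontrivial but finite algebraic manipulation, and combining it with the uniqueness argument above yields the theorem.
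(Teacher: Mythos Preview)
Your uniqueness argument has two genuine gaps. First, the claim that $S_{a+N-j}^{\lambda+j-1}$ acts invertibly on $\Pol_{a-N+j}[t]_{\mathrm{even}}$ because its kernel ``sits in degree exactly $a+N-j$'' is false for special $\lambda$. Acting on $t^k$ one computes $S_\ell^\mu t^k = (\ell-k)(\ell+k+2\mu)t^k - k(k-1)t^{k-2}$, so in the basis $\{t^{a-N+j}, t^{a-N+j-2},\ldots\}$ the operator is lower triangular with diagonal entries $(a+N-j-k)(a+N+j+k+2\lambda-2)$. The first factor is positive, but the second vanishes whenever $2\lambda + a + N + j + k - 2 = 0$ for some admissible $k$, and then the restriction is singular. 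Equivalently, for such $\lambda$ the renormalized polynomial $\Geg_{a+N-j}^{\lambda+j-1}(it)$ itself drops in degree and lands inside $\Pol_{a-N+j}[t]_{\mathrm{even}}$, giving a nontrivial kernel there. The paper confronts exactly these degenerations (the conditions $\Gamma_0^\pm = 0$) and devotes Lemmas~\ref{lemma-Gamma+zero} and~\ref{lemma-Gamma-zero} to them; they cannot be bypassed.

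Second, the ``symmetric argument'' on the $L_j^{A,-}$ side is not symmetric. The equation $L_j^{A,-}(\widetilde{\mathbf g})=0$ reads $S_{a-N-j}^{\lambda+j-1}\widetilde g_{-j} = -2(N-j)\frac{d}{dt}\widetilde g_{-j-1}$; going from $\widetilde g_0$ to $\widetilde g_{-1},\widetilde g_{-2},\ldots$ therefore requires \emph{integration} rather than inversion of $S$, and each step produces a constant of integration that is not fixed by the $A^-$-chain alone (note that here $\widetilde g_{-j}\in\Pol_{a-N-j}[t]_{\mathrm{even}}$ has the same degree as the subscript of $S$, so you certainly cannot invert $S$ instead). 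Consequently the $A$-equations by themselves give at best $\dim\Xi\le 2$ in the generic case, plus further undetermined constants in the degenerate cases, not $\dim\Xi\le 1$. This is precisely why the paper uses the $B_j^\pm$-equations as the recursion on each side (Phase~2), reserves the $A_j^\pm$-equations to kill the integration constants, and then imposes the matching $f_{+0}=f_{-0}$ (Phase~3) to reduce the two free constants $q_N^\pm$ to one. Your verification-of-the-candidate plan for existence is fine in spirit, but the uniqueness half needs the same level of case analysis as the paper.
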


\begin{rem}\label{remark-Step2} Note that Theorem \ref{Thm-findingequations} holds for any $m \in \Z$ with $m\geq N$, while Theorem \ref{Thm-solvingequations} is stated for $m = N$. By using Theorem \ref{Thm-findingequations}, we can reduce the problem of construction and classification of all differential symmetry breaking operators in \eqref{DSBO-space} (i.e., Problems A and B), to the problem of solving the system of ordinary differential equations \eqref{equation-space} for $m \geq N$ in general. In Theorem \ref{Thm-solvingequations} we solve this system for $m = N$, allowing us to solve completely Problems A and B for $m = N$. We do not consider the case $m > N$ in the present paper, since the way of solving the system \eqref{equation-space} changes considerably. However, we expect to publish the resolution of \eqref{equation-space} in another paper soon.
\end{rem}

The proofs of Theorems \ref{Thm-findingequations} and \ref{Thm-solvingequations} will be given in Sections \ref{section-proof_of_finding_equations} and \ref{section-proof_of_solving_equations} respectively.

\section{Proof of Theorem \ref{Thm-findingequations} (Finding Equations)}\label{section-proof_of_finding_equations} 
This section is devoted to prove Theorem \ref{Thm-findingequations}. We start with the decomposition of the differential operator $\widehat{d\pi_\mu}(C_1^+)\otimes \id_{\C_{m,\nu}}$. Throughout the section, we suppose $m \geq N$.

\subsection{Decomposition of the equation $\left(\widehat{d\pi_\mu}(C_1^+)\otimes \id_{\C_{m,\nu}}\right)\psi=0$}
Using the same strategy as in \cite{kkp, perez-valdes}, we decompose the equation \eqref{F-system-2-concrete} 
into a \textit{scalar} part (a second-order differential operator), and a \textit{vector} part (a first-order differential operator). With a slight abuse of notation, we regard $\lambda$ as the one-dimensional representation $\C_\lambda$ of $A$ defined via exponentiation.

We consider the basis $\{C_j^-: 1 \leq j  \leq 3\}$ of $\n_-$ and write $(z_1, z_2, z_3)$ for the corresponding coordinates. We identify $\n_-^\vee \simeq \n_+$ and write $(\zeta_1, \zeta_2, \zeta_3)$ for the dual coordinates.

For $Y \in \g$, we define $d\pi_{(\sigma^{2N+1}, \lambda)^*}^\text{{\normalfont{scalar}}}(Y), d\pi_{(\sigma^{2N+1}, \lambda)^*}^\text{{\normalfont{vect}}}(Y) \in \Weyl(\n_-) \otimes \End\left((V^{2N+1})^\vee\right)$ by
\begin{align*}
d\pi_{(\sigma^{2N+1}, \lambda)^*}^\text{{\normalfont{scalar}}}(Y)
& = d\pi_{\lambda^*}(Y) \otimes \id_{(V^{2N+1})^\vee},\\
d\pi_{(\sigma^{2N+1}, \lambda)^*}^\text{{\normalfont{vect}}}(Y)F & =  -\sum_{\ell=1}^3 z_\ell F \circ d\sigma^{2N+1}\left([Y, C_\ell^-]\big\rvert_\m\right),
\end{align*}
for $F\in C^\infty\left(\n_-, (V^{2N+1})^\vee\right)$. Then, the map $d\pi_\mu \equiv d\pi_{(\sigma^{2N+1}, \lambda)^*}$ is given as follows (see \cite[Eq. (3.13)]{kob-pev1}):
\begin{equation*}
d\pi_{(\sigma^{2N+1}, \lambda)^*} = d\pi_{(\sigma^{2N+1}, \lambda)^*}^\text{{\normalfont{scalar}}} + d\pi_{(\sigma^{2N+1}, \lambda)^*}^\text{{\normalfont{vect}}}.
\end{equation*}
We call $d\pi_{(\sigma^{2N+1}, \lambda)^*}^\text{{\normalfont{scalar}}}$ and $d\pi_{(\sigma^{2N+1}, \lambda)^*}^\text{{\normalfont{vect}}}$ the \emph{scalar part} and the \emph{vector part} of $d\pi_{(\sigma^{2N+1}, \lambda)^*}$ respectively.
Now, by taking the algebraic Fourier transform, we have
\begin{equation}\label{dpimu-decomposition}
\begin{gathered}
\widehat{d\pi_{(\sigma^{2N+1}, \lambda)^*}}\otimes \id_{\C_{m,\nu}} = \widehat{d\pi_{\lambda^*}} \otimes \id_{\Hom(V^{2N+1}, \C_{m,\nu})} + \widehat{d\pi_{(\sigma^{2N+1}, \lambda)^*}^\text{{\normalfont{vect}}}} \otimes \id_{\C_{m, \nu}},
\end{gathered}
\end{equation}
with $\widehat{d\pi_{(\sigma^{2N+1}, \lambda)^*}^\text{{\normalfont{vect}}}}(Y) :=  -\sum_{\ell=1}^3 \frac{\partial}{\partial \zeta_\ell} \circ d\sigma^{2N+1}\left([Y, C_\ell^-]\big\rvert_\m\right)$ 
(see \cite[Prop. 3.5]{kkp}).

Recall that, since the Levi subgroup $L^\prime$ acts irreducibly on
the nilradical $\n_+^\prime(\R)$ of $\p^\prime(\R) = \l^\prime(\R) + \n_+^\prime(\R)$, the equation (\ref{F-system-2-concrete}) is satisfied for all $C \in \n_+^\prime$ if and only if it is satisfied for some non-zero $C_0 \in \n_+^\prime$. In our case, we take $C_0 = C_1^+$ (see (\ref{elements})), and by (\ref{dpimu-decomposition}), we think of the equation
\begin{equation}\label{dpimu-equation}
\left(\widehat{d\pi_{(\sigma^{2N+1}, \lambda)^*}^\text{{\normalfont{scalar}}}}(C_1^+)\otimes \id_{\C_{m,\nu}}\right)\psi + \left(\widehat{d\pi_{(\sigma^{2N+1}, \lambda)^*}^\text{{\normalfont{vect}}}}(C_1^+)\otimes \id_{\C_{m,\nu}}\right)\psi= 0,
\end{equation}
where
\begin{equation}\label{dpimu-scalar-vect-formulas}
\begin{aligned}
\widehat{d\pi_{(\sigma^{2N+1}, \lambda)^*}^\text{{\normalfont{scalar}}}}(C_1^+) &=
\widehat{d\pi_{\lambda^*}}(C_1^+) \otimes \id_{\left(V^{2N+1}\right)^\vee},\\
\widehat{d\pi_{(\sigma^{2N+1}, \lambda)^*}^\text{{\normalfont{vect}}}}(C_1^+) &= \sum_{\ell=1}^3 d\sigma^{2N+1}\left(2X_{\ell, 1}\right)\frac{\partial}{\partial \zeta_\ell}.
\end{aligned}
\end{equation}

For actual computations, it is convenient to rewrite $\widehat{d\pi_{(\sigma^{2N+1}, \lambda)^*}^\text{{\normalfont{scalar}}}}$ and $\widehat{d\pi_{(\sigma^{2N+1}, \lambda)^*}^\text{{\normalfont{vect}}}}$ in terms of their \textit{vector coefficients} $M_s^\text{{\normalfont{scalar}}}$ and $M_s^\text{{\normalfont{vect}}}$ with respect to the basis $\{u_0, u_1, \ldots, u_{2N}\}$ of $V^{2N+1}$ in (\ref{basis-V2N+1}). In other words, we put
\begin{equation*}
\begin{aligned}
\left(\widehat{d\pi_{(\sigma^{2N+1}, \lambda)^*}^\text{{\normalfont{scalar}}}}(C_1^+)\otimes \id_{\C_{m,\nu}}\right)\psi &= \sum_{s=0}^{2N} M_s^\text{{\normalfont{scalar}}}(\psi)u_s^\vee,\\
\left(\widehat{d\pi_{(\sigma^{2N+1}, \lambda)^*}^\text{{\normalfont{vect}}}}(C_1^+)\otimes \id_{\C_{m,\nu}}\right)\psi &= \sum_{s=0}^{2N} M_s^\text{{\normalfont{vect}}}(\psi)u_s^\vee.
\end{aligned}
\end{equation*}
Now, the differential equation (\ref{dpimu-equation}) is equivalent to
\begin{equation}\label{Ms-equal-zero}
M_s(\psi) = M_s^\text{{\normalfont{scalar}}}(\psi) + M_s^\text{{\normalfont{vect}}}(\psi) = 0, \enspace \text{ for all } s = 0, 1, \ldots, 2N.
\end{equation}

\subsection{Vector coefficients of $\left(\widehat{d\pi_{(\sigma^{2N+1}, \lambda)^*}}(C_1^+)\otimes \id_{\C_{m,\nu}}\right)\psi$}\label{section-vector-coefficients}
For $\psi = \sum_{s=0}^{2N} \psi_su_s^\vee \in \Hom_\C\left(V_\lambda^{2N+1}, \C_{m, \nu} \otimes \Pol^a(\n_+)\right)$, the vector coefficients of the scalar part $M_s^\text{scalar}(\psi)$ are clearly given by
\begin{equation}\label{vector-coefficients-scalar}
M_s^\text{{\normalfont{scalar}}}(\psi) = \widehat{d\pi_{\lambda^*}^\text{{\normalfont{scalar}}}}(C_1^+)\psi_s \quad (s = 0, 1, \ldots, 2N).
\end{equation}
On the other hand, by \eqref{dpimu-scalar-vect-formulas}, the vector coefficients of the vector part $M_s^\text{vect}(\psi)$ are given by
\begin{equation}\label{vector-coefficients-vect}
M_s^\text{{\normalfont{vect}}}(\psi) = \sum_{s^\prime = 0}^{2N} A_{ss^\prime}\psi_{s^\prime}\quad (s = 0, 1, \ldots, 2N),
\end{equation}
where
\begin{equation*}
A_{ss^\prime} := \sum_{\ell = 1}^3 (d\sigma^{2N+1}(2X_{\ell,1})_{ss^\prime})\frac{\partial}{\partial \zeta_\ell} \quad (s, s^\prime = 0, 1, \ldots, 2N).
\end{equation*}

Moreover, we have
\begin{lemma}\label{lemma-coefficients-Ms-scalar-vector} For $\psi = \sum_{s=0}^{2N} \psi_su_s^\vee \in \Hom_\C\left(V_\lambda^{2N+1}, \C_{m, \nu} \otimes \Pol^a(\n_+)\right)$, we have
\begin{equation}\label{Ms-scalar}
M_s^\text{{\normalfont{scalar}}}(\psi) = \left(2\lambda \frac{\partial}{\partial \zeta_1} + 2E_{\zeta}\frac{\partial}{\partial \zeta_1} - \zeta
_1 \Delta^\zeta_{\C^3}\right)\psi_s \quad (s = 0, 1, \ldots, 2N),
\end{equation}
\begin{equation}\label{Ms-vect}
\left(M_s^\text{{\normalfont{vect}}}(\psi)\right)_{s=0}^{2N}
= \begin{pmatrix}
-2iN\frac{\partial}{\partial \zeta_2} & 2N\frac{\partial}{\partial \zeta_3} & 0 & \cdots & 0 \\[6pt]
-\frac{\partial}{\partial \zeta_3} & 2i(1-N)\frac{\partial}{\partial \zeta_2} & (2N-1)\frac{\partial}{\partial \zeta_3} & \ddots & \vdots\\[6pt]
0 & -2\frac{\partial}{\partial \zeta_3} & 2i(2-N)\frac{\partial}{\partial \zeta_2} & \ddots & 0\\[6pt]
\vdots & \ddots & \ddots & \ddots & \frac{\partial}{\partial \zeta_3}\\[6pt]
0 & \cdots & 0 & -2N\frac{\partial}{\partial \zeta_3} & 2iN\frac{\partial}{\partial \zeta_2}
\end{pmatrix}
\left(\psi_s\right)_{s=0}^{2N}.
\end{equation}
\end{lemma}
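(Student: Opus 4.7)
My plan is to treat the scalar and vector parts of \eqref{Ms-equal-zero} separately, since they are already decoupled in \eqref{dpimu-scalar-vect-formulas}. For the scalar identity \eqref{Ms-scalar}, I would observe that $\widehat{d\pi_{(\sigma^{2N+1},\lambda)^*}^{\text{scalar}}}(C_1^+) = \widehat{d\pi_{\lambda^*}}(C_1^+)\otimes \id_{(V^{2N+1})^\vee}$ acts diagonally on the coordinates $\psi_s$. Hence each $M_s^{\text{scalar}}(\psi)$ is the Fourier-transformed infinitesimal action in the scalar line-bundle case $N=0$, which is exactly the second-order operator $2\lambda\partial_{\zeta_1}+2E_\zeta\partial_{\zeta_1}-\zeta_1\Delta^\zeta_{\C^3}$. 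This expression can be obtained either from the closed formula \cite[Eq. (3.13)]{kob-pev1} specialized to $C_1^+\in\n_+$, or by direct reference to the corresponding computation in \cite{perez-valdes, kkp}.

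For \eqref{Ms-vect}, by \eqref{vector-coefficients-vect} what remains is to compute the matrices of $d\sigma^{2N+1}(2X_{\ell,1})$ in the basis $\{u_0,\ldots,u_{2N}\}$ for $\ell=1,2,3$. The first one vanishes because $X_{1,1}=0$ by definition, which accounts for the absence of $\partial_{\zeta_1}$ from \eqref{Ms-vect}. For $\ell=2,3$, I would use the factorisation $\sigma^{2N+1}=\theta^{2N}\circ\widetilde{\varpi}^{-1}$ and compute $d\varpi^{-1}(X_{\ell,1})\in\mathfrak{su}(2)$ explicitly by differentiating \eqref{expression-covering} at the identity; a short calculation should give
\begin{equation*}
d\varpi^{-1}(X_{2,1})=\tfrac{i}{2}\begin{pmatrix}1&0\\0&-1\end{pmatrix},\qquad d\varpi^{-1}(X_{3,1})=\tfrac{1}{2}\begin{pmatrix}0&-1\\1&0\end{pmatrix}.
\end{equation*}
Then, since $\theta^{2N}$ acts on $V^{2N+1}$ by $p\mapsto p(g^{-1}\cdot)$, the infinitesimal action is $d\theta^{2N}(A)p=-(A\xi)\cdot\nabla_\xi p$ for $A\in\mathfrak{su}(2)$. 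Evaluating on the basis vectors $u_j=\xi_1^{2N-j}\xi_2^j$ yields a diagonal contribution from $X_{2,1}$ with eigenvalue $2i(j-N)$ on $u_j$, and a tridiagonal contribution from $X_{3,1}$ with raising coefficient $2N-j$ and lowering coefficient $-j$. Substituting these matrices into \eqref{vector-coefficients-vect} reproduces \eqref{Ms-vect} entry by entry.

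The only real obstacle I anticipate is bookkeeping: keeping the sign and transpose conventions consistent among the definition of $A_{ss'}$, the dual-basis pairing $u_s^\vee(u_r)=\delta_{s,r}$, and the explicit form of $d\varpi$ derived from \eqref{expression-covering}. Once the conventions are fixed the verification reduces to a termwise calculation on the basis $\{u_j\}$ with no conceptual content beyond the finite-dimensional representation theory of $SU(2)$ already recalled in Section \ref{section-fdrepsofSO(3)}.
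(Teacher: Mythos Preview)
Your proposal is correct and follows essentially the same route as the paper. For the scalar part the paper simply cites the known formula $\widehat{d\pi_{\lambda^*}}(C_1^+)=2\lambda\partial_{\zeta_1}+2E_\zeta\partial_{\zeta_1}-\zeta_1\Delta^\zeta_{\C^3}$ from \cite[Lem.~6.5]{kob-pev2}, exactly as you suggest; for the vector part the paper likewise computes $d\sigma^{2N+1}(2X_{\ell,1})$ via the covering $\varpi:SU(2)\to SO(3)$, with the only cosmetic difference that it works at the group level (identifying $e^{tX_{\ell,1}}=\varpi(U(\cdot))$ and then differentiating) whereas you work directly at the Lie algebra level by inverting $d\varpi$---these are of course the same calculation, and your anticipated bookkeeping of the transpose coming from the pre-composition $F\circ d\sigma^{2N+1}(\cdot)$ on $(V^{2N+1})^\vee$ is precisely what produces the off-diagonal pattern in \eqref{Ms-vect}.
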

\begin{proof}
The formula for the vector coefficients $M_s^\text{{\normalfont{scalar}}}$ follows from
\begin{equation*}
\widehat{d\pi_{\lambda^*}}(C_1^+) = 2\lambda \frac{\partial}{\partial \zeta_1} + 2E_{\zeta}\frac{\partial}{\partial \zeta_1} - \zeta
_1 \Delta^\zeta_{\C^3},
\end{equation*}
(see \cite[Lem. 6.5]{kob-pev2}). For the vector part $M_s^\text{{\normalfont{vect}}}$, we first compute the differential of $\sigma^{2N+1}$ on the elements $2X_{\ell,1}$, $(\ell = 1,2,3)$. To this end, we utilize the isomorphism $\m \simeq \so(3,\C)$ and identify each $X_{\ell,1}$ with $E_{1,\ell} - E_{\ell, 1} \in \so(3,\C)$ for $\ell = 1,2,3$. A direct computation shows that for $t \in \R$ one has
\begin{align*}
e^{tX_{1,1}} = I_3 = \varpi(I_2),
&& e^{tX_{2,1}} = \varpi(U(e^{it/2},0)),
&& e^{tX_{3,1}} = \varpi(U(\cos(t/2), -\sin(t/2)),
\end{align*}
where $\varpi: SU(2) \twoheadrightarrow SO(3)$ is the covering map (\ref{expression-covering}). Thus, we obtain
\begin{equation*}
\begin{aligned}
d\sigma^{2N+1}(2X_{1,1}) &= 0,\\[6pt]
d\sigma^{2N+1}(2X_{2,1}) &=
2i\begin{pmatrix}
-N \\
& 1-N \\
& & \ddots \\
& & & 0 \\
& & & & \ddots \\
& & & & & N-1\\
& & & & & & N
\end{pmatrix},\\[8pt]
d\sigma^{2N+1}(2X_{3,1}) &= 
\begin{pmatrix}
0 & 2N & 0 & \cdots & 0\\
1 & 0 & 2N-1 & \ddots & \vdots\\
0 & -2 & 0 & \ddots & 0\\
\vdots & \ddots & \ddots & \ddots & 1\\
0 & \cdots & 0 & -2N & 0
\end{pmatrix}.
\end{aligned}
\end{equation*}
Therefore, the matrix $(A_{ss^\prime})$ is given by
\begin{equation*}
(A_{ss^\prime}) = \begin{pmatrix}
-2iN\frac{\partial}{\partial \zeta_2} & 2N\frac{\partial}{\partial \zeta_3} & 0 & \cdots & 0 \\[6pt]
-\frac{\partial}{\partial \zeta_3} & 2i(1-N)\frac{\partial}{\partial \zeta_2} & (2N-1)\frac{\partial}{\partial \zeta_3} & \ddots & \vdots\\[6pt]
0 & -2\frac{\partial}{\partial \zeta_3} & 2i(2-N)\frac{\partial}{\partial \zeta_2} & \ddots & 0\\[6pt]
\vdots & \ddots & \ddots & \ddots & \frac{\partial}{\partial \zeta_3}\\[6pt]
0 & \cdots & 0 & -2N\frac{\partial}{\partial \zeta_3} & 2iN\frac{\partial}{\partial \zeta_2}
\end{pmatrix}.
\end{equation*}
\end{proof}

\subsection{Explicit formul\ae{} for $M_s(\psi)$} 
In this section, for an element $\psi \in \Hom_{L^\prime}\left(V^{2N+1}_\lambda, \C_{m, \nu} \otimes \Pol^a(\n_+)\right)$
\begin{equation}\label{expression-psi}
\psi= \sum_{k=m-N}^{m+N}\left(T_{a-k} g_k\right)h_k^+ = \sum_{k=m-N}^{m+N}\left(T_{a-k} g_k\right)(\zeta_1 + i\zeta_2)^k u_{k}^\vee,
\end{equation}
we give explicit formulas of the vector coefficients
\begin{equation}\label{decomposition-Ms}
M_s(\psi) = M_s^\text{{\normalfont{scalar}}}(\psi) + M_s^\text{{\normalfont{vect}}}(\psi) \quad (s = 0, 1, \ldots, 2N),
\end{equation}
in terms of the polynomials $L_j^{A, \pm}(\widetilde{\mathbf{g}}), L_j^{B, \pm}(\widetilde{\mathbf{g}})$ ($j = 0, \ldots, N$) defined in (\ref{expression-operators-L-AB}). To do this, we first recall the notion of the \emph{$T$-saturation}.

Recalling from (\ref{Def-T_b}), for $\ell \in \N$, the linear map $T_\ell: \C[t] \longrightarrow \C(\zeta_1, \zeta_2, \zeta_3)$ is defined by
\begin{equation*}
(T_\ell g)(\zeta) := Q_2(\zeta^\prime)^\frac{\ell}{2} g\left(\frac{\zeta_3}{\sqrt{Q_2(\zeta^\prime)}}\right),
\end{equation*}
where $Q_2(\zeta^\prime) := \zeta_1^2 + \zeta_2^2$.
A differential operator $D$ on $\C^3$ is said to be $T$-\textbf{saturated} if there exists an operator $S$ on $\C[t]$ such that the following diagram commutes
\begin{equation*}
\begin{tikzcd}
\C[t]\arrow[r,"T_\ell"]\arrow[d, "S"'] & \C(\zeta_1, \zeta_2, \zeta_3)\arrow[d, "D"]\\
\C[t]\arrow[r, "T_\ell"] & \C(\zeta_1, \zeta_2, \zeta_3)
\end{tikzcd}
\end{equation*}
Such operator $S$ is unique whenever it exists and is denoted by $S = T_\ell^\sharp(D)$. In other words, $T_\ell^\sharp$ satisfies
\begin{equation*}
T_\ell \circ T_\ell^\sharp(D) = D \circ T_\ell.
\end{equation*}
Moreover, the following multiplicative property holds whenever it is well-defined:
\begin{equation*}
T_\ell^\sharp(D_1 \cdot D_2) = T_\ell^\sharp(D_1) \cdot T_\ell^\sharp(D_2).
\end{equation*}

\begin{lemma}[{\cite[Lem 6.27]{kkp}}]\label{lemma-Tsaturation-formulas} Denote by $S_{\ell}^\mu$ and by $\vartheta_t$ the imaginary Gegenbauer differential operator \eqref{Gegen-imaginary}, and the Euler operator $\frac{d}{dt}$ respectively. Then, for any $\ell \in \N$ and any $g \in \Pol_\ell[t]_\text{{\normalfont{even}}}$  (see \text{{\normalfont{(\ref{def-Pol_even})}}}), the following identities hold.
\begin{enumerate}[label=\normalfont{(\arabic*)}, topsep=6pt, itemsep=6pt]
\item $\displaystyle{T_\ell^\sharp\left(\frac{Q_2(\zeta^\prime)}{\zeta_1}\widehat{
d\pi_{\lambda^*}}(C_1^+)\right) = S_{\ell}^{\lambda-1}}$.
\item $\displaystyle{\left(T_\ell g\right)(\zeta) = Q_2(\zeta^\prime)\left(T_{\ell-2}g\right)(\zeta)}$.
\item $\displaystyle{\frac{\partial}{\partial \zeta_j}\left(T_\ell g\right)(\zeta) = \frac{\zeta_j}{Q_2(\zeta^\prime)}T_\ell\left((\ell-\vartheta_t)g\right)(\zeta)}$, for $j=1,2$.
\item $\displaystyle{\frac{\partial}{\partial \zeta_3}\left(T_\ell g\right)(\zeta)= T_{\ell-1}\left(\frac{dg}{dt}\right)(\zeta)}$.
\end{enumerate}
\end{lemma}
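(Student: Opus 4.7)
The strategy is to dispatch identities (2), (3), (4) as direct chain-rule computations and then combine them to establish the main identity (1). For (2), writing $r := \sqrt{Q_2(\zeta')}$, one has $(T_\ell g)(\zeta) = r^\ell g(\zeta_3/r) = r^2 \cdot r^{\ell-2} g(\zeta_3/r) = Q_2(\zeta') (T_{\ell-2} g)(\zeta)$, which is immediate. For (3) and (4), I would use $\partial r/\partial \zeta_j = \zeta_j/r$ ($j = 1, 2$) and $\partial r/\partial \zeta_3 = 0$, then apply the chain rule to $r^\ell g(\zeta_3/r)$; the identity $(\zeta_3/r) g'(\zeta_3/r) = (\vartheta_t g)(\zeta_3/r)$ converts the resulting expressions into the stated forms.

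For the central identity (1), the plan is to apply the three summands of
\[
\widehat{d\pi_{\lambda^*}}(C_1^+) = 2\lambda \frac{\partial}{\partial \zeta_1} + 2 E_\zeta \frac{\partial}{\partial \zeta_1} - \zeta_1 \Delta^\zeta_{\C^3},
\]
each premultiplied by $Q_2(\zeta')/\zeta_1$, to $T_\ell g$ separately. The first term yields $2\lambda T_\ell((\ell - \vartheta_t) g)$ directly from (3) with $j = 1$. For the second, I would exploit that $T_\ell g$ is homogeneous of degree $\ell$ in $\zeta$ (since $T_\ell t^{\ell - 2j} = Q_2(\zeta')^j \zeta_3^{\ell - 2j}$ on basis monomials), so $E_\zeta \frac{\partial}{\partial \zeta_1} T_\ell g = (\ell - 1) \frac{\partial}{\partial \zeta_1} T_\ell g$, yielding $2(\ell - 1) T_\ell((\ell - \vartheta_t) g)$. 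For the third, applying (3) twice in $\zeta_1, \zeta_2$ and (4) twice in $\zeta_3$, with the cross terms combining via $\zeta_1^2 + \zeta_2^2 = Q_2(\zeta')$ and (2) used to replace $T_{\ell - 2}(g'')$ by $Q_2(\zeta')^{-1} T_\ell(g'')$, produces $-T_\ell\bigl((\ell - \vartheta_t)^2 g + g''\bigr)$.

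Summing and writing $u := \ell - \vartheta_t$, the total operator applied to $g$ is $u(\ell + 2\lambda - 2 + \vartheta_t) g - g''$. A final expansion using $\vartheta_t^2 = \vartheta_t + t^2 \tfrac{d^2}{dt^2}$ rearranges this to $-(1 + t^2) \tfrac{d^2}{dt^2} - (2\lambda - 1) t \tfrac{d}{dt} + \ell(\ell + 2\lambda - 2)$, which is precisely $S_\ell^{\lambda - 1}$. The step requiring most care will be this last algebraic manipulation, where one must track the noncommutativity of $\vartheta_t$ with itself and with $\tfrac{d}{dt}$ when moving factors past each other; the conceptual point worth underscoring is that the prefactor $Q_2(\zeta')/\zeta_1$ in front of $\widehat{d\pi_{\lambda^*}}(C_1^+)$ is exactly what is needed to render each summand $T$-saturated on $\Pol_\ell[t]_{\text{{\normalfont{even}}}}$, so that the existence of $T_\ell^\sharp$ is established as part of the same computation.
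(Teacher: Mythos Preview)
Your argument is correct. The paper itself does not supply a proof of this lemma; it simply cites \cite[Lem.~6.27]{kkp}, so there is no in-paper proof to compare against. What you have written is the standard direct verification: identities (2)--(4) are immediate chain-rule computations from the definition $(T_\ell g)(\zeta)=r^\ell g(\zeta_3/r)$ with $r=\sqrt{Q_2(\zeta')}$, and (1) follows by feeding the explicit formula $\widehat{d\pi_{\lambda^*}}(C_1^+)=2\lambda\,\partial_{\zeta_1}+2E_\zeta\,\partial_{\zeta_1}-\zeta_1\Delta^\zeta_{\C^3}$ through (2)--(4) together with the homogeneity of $T_\ell g$. Your bookkeeping is accurate, including the key steps $\bigl(\partial_{\zeta_1}^2+\partial_{\zeta_2}^2\bigr)T_\ell g = Q_2(\zeta')^{-1}T_\ell\bigl((\ell-\vartheta_t)^2 g\bigr)$ and the final reduction via $\vartheta_t^2=\vartheta_t+t^2\tfrac{d^2}{dt^2}$ to obtain $-(1+t^2)g''-(2\lambda-1)tg'+\ell(\ell+2\lambda-2)g=S_\ell^{\lambda-1}g$. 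One cosmetic remark: in the summand coming from the Laplacian you implicitly use that $(\ell-\vartheta_t)g$ again lies in $\Pol_\ell[t]_{\text{even}}$ so that (3) may be reapplied; this is clear but worth stating once.
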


\begin{lemma}\label{lemma-formulae-Ms-final} Let $\psi = \sum_{k = m-N}^{m+N}(T_{a-k}g_k)h_k^+$. Then, the vector coefficients $M_s(\psi) = M_s^\text{{\normalfont{scalar}}}(\psi) + M_s^\text{{\normalfont{vect}}}(\psi)$ $(s = 0, 1, \ldots, 2N)$ are given as follows:
\begin{equation*}
\begin{aligned}
M_s(\psi) = (&\zeta_1 + i\zeta_2)^{k-1} \bigg[\zeta_1^2 T_{a-k-2}\left(S_{a-k}^{\lambda - 1}g_{k} +(N-k+m)\frac{d}{dt}g_{k+1}\right)\\
&+ \zeta_2^2T_{a-k-2}\left( 2(m-k)(a-k -\vartheta_t) g_k - (N-k+m) \frac{d}{dt}g_{k+1} \right) \\
&+ \zeta_1 \zeta_2 T_{a-k-2} \left(iS_{a-k}^{\lambda-1}g_k + 2i(k-m)(a-k - \vartheta_t)g_k + 2i(N-k+m) \frac{d}{dt}g_{k+1} \right) \\
&+ T_{a-k}\left(2k(\lambda+a-1+m-k)g_k - (N+k-m)\frac{d}{dt} g_{k-1}\right)\bigg],\\
\text{ for } s = k-m&+N, \enspace k = m-N, \ldots, m, \ldots, m+N. 
\end{aligned}
\end{equation*}
\end{lemma}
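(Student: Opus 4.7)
The plan is to compute $M_s^\text{scalar}(\psi)$ and $M_s^\text{vect}(\psi)$ separately via Lemma \ref{lemma-coefficients-Ms-scalar-vector}, then repackage the outputs using the $T$-saturation identities of Lemma \ref{lemma-Tsaturation-formulas}. Fix $k \in \{m-N, \ldots, m+N\}$ and write $s = k-m+N$, $f = T_{a-k}g_k$, and $\omega = \zeta_1 + i\zeta_2$, so that $\psi_s = f\omega^k$. The computation is driven by the elementary facts $\Delta\omega^k = 0$, $\partial_3\omega^k = 0$, $\partial_1\omega^k = k\omega^{k-1}$, and $\partial_2\omega^k = ik\omega^{k-1}$.

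For the scalar part, expanding $\widehat{d\pi_{\lambda^*}}(C_1^+)(f\omega^k)$ by the Leibniz rule and using $E_\zeta\omega^k = k\omega^k$ yields
\[
M_s^\text{scalar}(\psi) = \omega^k\widehat{d\pi_{(\lambda+k)^*}}(C_1^+)f + 2k\omega^{k-1}\bigl[(\lambda+k-1)f - i\omega\,\partial_2 f + \zeta_3\,\partial_3 f\bigr],
\]
where the parameter shift $\lambda \to \lambda+k$ arises naturally from the $k\omega^{k-1}$-corrections produced by Leibniz on $2\lambda\partial_1$ and $2E_\zeta\partial_1$. Applying part (1) of Lemma \ref{lemma-Tsaturation-formulas} at parameter $\lambda+k$ turns the first summand into $\zeta_1\omega^k T_{a-k-2}(S_{a-k}^{\lambda+k-1}g_k)$, which the elementary identity $S_{a-k}^{\lambda+k-1} = S_{a-k}^{\lambda-1} + 2k(a-k-\vartheta_t)$ rewrites in terms of the desired $\lambda-1$ parameter; parts (3) and (4) convert $\partial_2 f$ and $\partial_3 f$ into $\frac{\zeta_2}{Q_2(\zeta')}T_{a-k}((a-k-\vartheta_t)g_k)$ and $T_{a-k-1}(g_k')$ respectively. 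For the vector part, substituting $s = k-m+N$ in $M_s^\text{vect}(\psi) = -s\partial_3\psi_{s-1} + 2i(s-N)\partial_2\psi_s + (2N-s)\partial_3\psi_{s+1}$ and again invoking parts (3)--(4) gives terms in $\omega^{k\pm 1}T_{a-k\mp 2}(g_{k\pm 1}')$, $\zeta_2\omega^k T_{a-k-2}((a-k-\vartheta_t)g_k)$, and $\omega^{k-1}T_{a-k}g_k$.

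Adding the two parts, the essential simplifications are: (i) collapse the $(\zeta_1-i\zeta_2)\omega^k T_{a-k-2}$ combinations into $\omega^{k-1}T_{a-k}$ via $\omega(\zeta_1-i\zeta_2) = Q_2(\zeta')$ together with part (2); and (ii) cancel the residual $\zeta_3$ contribution using the identity $T_\ell(\vartheta_t g) = \zeta_3\, T_{\ell-1}(g')$, which is immediate from the defining formula $(T_\ell g)(\zeta) = Q_2(\zeta')^{\ell/2}g(\zeta_3/\sqrt{Q_2(\zeta')})$. After these steps every summand carries an overall $\omega^{k-1}$, and the residual factor lives in $\spanned_\C\{\zeta_1^2, \zeta_1\zeta_2, \zeta_2^2, 1\}$ tensored with operators on $g_{k-1}, g_k, g_{k+1}$; expanding $\omega\zeta_1 = \zeta_1^2 + i\zeta_1\zeta_2$, $\omega\zeta_2 = \zeta_1\zeta_2 + i\zeta_2^2$, and $\omega^2 = \zeta_1^2 - \zeta_2^2 + 2i\zeta_1\zeta_2$ in the remaining $\omega^k$ and $\omega^{k+1}$ factors and sorting by monomials produces the four bracketed summands of the statement.

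The principal obstacle is the bookkeeping rather than any conceptual difficulty: one must verify that the $\zeta_3$ terms cancel exactly (requiring the $T_\ell(\vartheta_t g) = \zeta_3\,T_{\ell-1}(g')$ identity), and that the pure $T_{a-k}g_k$ coefficient $2k(\lambda+a-1+m-k)$ assembles correctly as the sum of $2k(\lambda+a-1)$ from the scalar part (obtained after absorbing the $(a-k-\vartheta_t)g_k$ contribution into $(\lambda+k-1)f$) and $-2k(k-m)$ from the vector part's $\partial_2$ contribution.
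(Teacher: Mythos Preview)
Your proposal is correct and follows the same overall architecture as the paper: split into scalar and vector parts, convert each via the $T$-saturation identities of Lemma~\ref{lemma-Tsaturation-formulas}, then reorganize by powers of $\zeta_1,\zeta_2$. The one genuine difference is in how you handle the scalar part. The paper invokes \cite[Lem.~4.6]{kkp}, a ready-made product rule for $\widehat{d\pi_{\lambda^*}}(C_1^+)$ acting on $(T_{a-k}g_k)\cdot h_k^+$, which immediately yields
\[
M_s^{\text{scalar}}(\psi)=\zeta_1\omega^k T_{a-k-2}(S_{a-k}^{\lambda-1}g_k)+2k(\lambda+a-1)\omega^{k-1}T_{a-k}g_k
\]
with no $\zeta_3$ term and no parameter shift. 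You instead expand by Leibniz directly, which forces the shift $\lambda\to\lambda+k$ and produces the extra pieces $-i\omega\,\partial_2 f$ and $\zeta_3\,\partial_3 f$; you then undo the shift via $S_{a-k}^{\lambda+k-1}=S_{a-k}^{\lambda-1}+2k(a-k-\vartheta_t)$ and cancel the $\zeta_3$ term with the identity $T_\ell(\vartheta_t g)=\zeta_3 T_{\ell-1}(g')$. After this detour you land on exactly the paper's intermediate formula, and from there the vector-part computation and the final $\omega^2,\omega\zeta_1,\omega\zeta_2$ expansion are identical. Your route is self-contained (no appeal to the external kkp lemma) at the price of introducing and then eliminating two auxiliary terms; the paper's is shorter because that cancellation is already baked into the cited lemma.
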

\begin{proof}
By the expressions of the generators $h_k^+$ (see \eqref{generators-Hom(V2N+1,CmHk)}), we can write $\psi$ in coordinates as follows:
\begin{equation*}
\begin{gathered}
\psi = {}^t(\psi_s)_{s=0}^{2N} = {}^t(\psi_{k-m+N})_{k=m-N}^{m+N} = {}^t\left(\left(T_{a-k}g_k\right)(\zeta_1+i\zeta_2)^k\right)_{k=m-N}^{m+N}\\[4pt]
= \begin{pmatrix}
\displaystyle{(T_{a-m+N}g_{m-N})(\zeta) (\zeta_1 + i\zeta_2)^{m-N}}\\
\vdots\\
\displaystyle{(T_{a-m}g_{m})(\zeta) (\zeta_1+ i\zeta_2)^{m}}\\[4pt]
\vdots\\
\displaystyle{(T_{a-m-N}g_{m+N})(\zeta) (\zeta_1 + i\zeta_2)^{m+N}}
\end{pmatrix}.
\end{gathered}
\end{equation*}

Fix $k \in \{m-N, \ldots, m+N\}$ and set $s=k-m+N \in \{0, 1, \ldots, 2N\}$. By \cite[Lem. 4.6]{kkp} we obtain the following formula:
\begin{equation*}
\widehat{d\pi_{\lambda^*}}(C_1^+)\left((T_{a-k}g_k)h_k^+\right) = \widehat{d\pi_{\lambda^*}}(C_1^+)(T_{a-k}g_k)h_k^+ + 2(\lambda + a -1)(T_{a-k}g_k)\frac{\partial h_k^+}{\partial \zeta_1}.
\end{equation*}
Then, by \eqref{Ms-scalar} and Lemma \ref{lemma-Tsaturation-formulas}(1), we obtain the following expression of $M_s^\text{{\normalfont{scalar}}}$
\begin{equation*}
\begin{aligned}
M_{s}^\text{{\normalfont{scalar}}}(\psi) = &\enspace \frac{\zeta_1}{Q_2(\zeta^\prime)}T_{a-k}\left(S_{a-k}^{\lambda-1}g_{k} \right)(\zeta_1 +i\zeta_2)^k \\
&+ 2k(\lambda + a -1)(\zeta_1 + i \zeta_2)^{k-1}\left(T_{a-k}g_{k} \right),
\end{aligned}
\end{equation*}
and by (\ref{Ms-vect}) the following one of $M_s^\text{{\normalfont{vect}}}$:
\begin{equation*}
\begin{aligned}
M_s^\text{{\normalfont{vect}}}(\psi) = &-s\frac{\partial}{\partial \zeta_3}\psi_{s-1} + 2i(s-N)\frac{\partial}{\partial \zeta_2}\psi_s + (2N-s)\frac{\partial}{\partial \zeta_3}\\
= & -(N+k-m)\frac{\partial}{\partial \zeta_3}\left[\left(T_{a-k+1}g_{k-1}\right)(\zeta_1 + i\zeta_2)^{k-1}\right] \\
&+2i(k-m)\frac{\partial}{\partial \zeta_2} \left[\left(T_{a-k}g_{k}\right)(\zeta_1 +i\zeta_2)^k \right] \\
&+ (N-k+m)\frac{\partial}{\partial \zeta_3}\left[\left(T_{a-k-1}g_{k+1} \right)(\zeta_1 + i\zeta_2)^{k+1}\right].
\end{aligned}
\end{equation*}

Simplifying the expressions above by using Lemma \ref{lemma-Tsaturation-formulas} leads to
\begin{equation*}
M_{s}^\text{{\normalfont{scalar}}}(\psi) = \zeta_1(\zeta_1 +i\zeta_2)^k T_{a-k-2}\left(S_{a-k}^{\lambda-1}g_{k} \right)
+ 2k(\lambda + a -1)(\zeta_1 + i \zeta_2)^{k-1}\left(T_{a-k}g_{k} \right),
\end{equation*}
and
\begin{equation*}
\begin{aligned}
M_s^\text{{\normalfont{vect}}}(\psi) = &-(N+k-m)\frac{\partial}{\partial \zeta_3}\left(T_{a-k+1}g_{k-1}\right)(\zeta_1 + i\zeta_2)^{k-1}\\
&+2i(k-m)\frac{\partial}{\partial \zeta_2}\left(T_{a-k}g_{k}\right)(\zeta_1 +i\zeta_2)^k
-2k(k-m)(\zeta_1 +i\zeta_2)^{k-1}\left(T_{a-k}g_{k}\right)\\
&+ (N-k+m)\frac{\partial}{\partial \zeta_3}\left(T_{a-k-1}g_{k+1} \right)(\zeta_1 + i\zeta_2)^{k+1}\\
= &-(N+k-m)T_{a-k}\left(\frac{d}{dt} g_{k-1}\right)(\zeta_1 + i\zeta_2)^{k-1}\\
&+2i(k-m)\frac{\zeta_2}{Q_2(\zeta^\prime)}T_{a-k}\left((a-k-\vartheta_t)g_{k}\right)(\zeta_1 +i\zeta_2)^k\\
&+2k(m-k)(\zeta_1 +i\zeta_2)^{k-1}\left(T_{a-k}g_{k}\right)\\
&+ (N-k+m)T_{a-k-2}\left(\frac{d}{dt} g_{k+1}\right)(\zeta_1 + i\zeta_2)^{k+1}\\
= & \;(\zeta_1 +i\zeta_2)^{k-1}T_{a-k}\left(2k(m-k)g_{k}- (N+k-m)\frac{d}{dt} g_{k-1}\right)\\
&+2i(k-m)\zeta_2(\zeta_1 +i\zeta_2)^k T_{a-k-2}\left((a-k-\vartheta_t)g_{k}\right)\\
&+(N-k+m)(\zeta_1 + i\zeta_2)^{k+1}T_{a-k-2}\left(\frac{d}{dt} g_{k+1}\right).
\end{aligned}
\end{equation*}
where we applied Lemma \ref{lemma-Tsaturation-formulas}(2) for $M_s^\text{{\normalfont{scalar}}}$, and Lemma \ref{lemma-Tsaturation-formulas}(3)-(4) and (2) in the last two equalities of $M_s^\text{{\normalfont{vect}}}$ respectively. Now, by merging the scalar and vector components of $M_s$, we arrive at
\begin{equation*}
\begin{aligned}
M_s(\psi) &= M_s^\text{{\normalfont{scalar}}}(\psi) + M_s^\text{{\normalfont{vect}}}(\psi) \\
& = (\zeta_1 + i\zeta_2)^{k-1} \Big[\zeta_1(\zeta_1 + i\zeta_2)
T_{a-k-2}\left(S_{a-k}^{\lambda - 1} g_k\right) \\
& +2i(k-m)\zeta_2(\zeta_1+i\zeta_2)T_{a-k-2}\left((a-k-\vartheta_t)g_{k}\right)\\
&+ (N-k+m)(\zeta_1+i\zeta_2)^2T_{a-k-2}\left(\frac{d}{dt} g_{k+1}\right)\\
&+ T_{a-k}\left(2k(m-k +\lambda+a-1)g_{k}-(N+k-m)\frac{d}{dt} g_{k-1}\right)\Big].
\end{aligned}
\end{equation*}

Rearranging the terms above yields the desired formula.
\end{proof}

\subsection{Proof of Theorem \ref{Thm-findingequations}}\label{section-proof_of_finding_equations-subsection}
In this subsection we prove Theorem \ref{Thm-findingequations}, relying on the following key lemma, which was established in \cite{perez-valdes}.

\begin{lemma}[\cite{perez-valdes}] \label{lemma-invariant-polynomials} Suppose that $p_1, \ldots, p_4 \in \Pol(\C^3)$ are $O(2,\C)$-invariant polynomials with respect to the action given by {\normalfont{(\ref{action-SO(2)-on-Pol})}}. Then, the following two conditions on $p_j$ are equivalent:
\begin{enumerate}[label=\normalfont{(\roman*)}, topsep=0pt]
\item $\zeta_1^2 p_1 + \zeta_2^2 p_2 + \zeta_1\zeta_2 p_3 + p_4 = 0$.
\item $p_1 = p_2$, $p_3 = 0$, and $Q_2(\zeta^\prime) p_1 + p_4 = 0$.
\end{enumerate}
\end{lemma}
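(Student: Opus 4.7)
The direction (ii) $\Rightarrow$ (i) is automatic, since substituting $p_2 = p_1$, $p_3 = 0$, and $p_4 = -Q_2(\zeta')p_1$ into the left-hand side of (i) gives $\zeta_1^2 p_1 + \zeta_2^2 p_1 - (\zeta_1^2 + \zeta_2^2)p_1 = 0$. So the content of the lemma lies in (i) $\Rightarrow$ (ii), and the plan is to exploit two well-chosen elements of $O(2,\C)$ (each of determinant $-1$) that leave the $p_j$ fixed but act non-trivially on the ``coefficient polynomials'' $\zeta_1^2, \zeta_2^2, \zeta_1\zeta_2, 1$ appearing in (i).

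First, apply the reflection $g_1 = \mathrm{diag}(1,-1) \in O(2,\C)$, which under the action \eqref{action-SO(2)-on-Pol} sends $\zeta_1 \mapsto \zeta_1$ and $\zeta_2 \mapsto -\zeta_2$. Since each $p_j$ is $O(2,\C)$-invariant, acting on the identity (i) yields
\begin{equation*}
\zeta_1^2 p_1 + \zeta_2^2 p_2 - \zeta_1\zeta_2 p_3 + p_4 = 0.
\end{equation*}
Subtracting this from (i) gives $2\zeta_1\zeta_2 p_3 = 0$, and since $\Pol(\C^3)$ is an integral domain, we conclude $p_3 = 0$.

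Second, apply the swap $g_2 = \begin{pmatrix} 0 & 1 \\ 1 & 0 \end{pmatrix} \in O(2,\C)$, which interchanges $\zeta_1$ and $\zeta_2$. Applied to the reduced identity $\zeta_1^2 p_1 + \zeta_2^2 p_2 + p_4 = 0$ (using again the invariance of the $p_j$), this yields
\begin{equation*}
\zeta_2^2 p_1 + \zeta_1^2 p_2 + p_4 = 0.
\end{equation*}
Subtracting gives $(\zeta_1^2 - \zeta_2^2)(p_1 - p_2) = 0$, and since $\zeta_1^2 - \zeta_2^2 \neq 0$ in the integral domain $\Pol(\C^3)$, we obtain $p_1 = p_2$. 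Substituting this back into (i) with $p_3 = 0$ produces $(\zeta_1^2 + \zeta_2^2)p_1 + p_4 = Q_2(\zeta')p_1 + p_4 = 0$, which is the remaining part of (ii).

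There is no real obstacle here; the only point worth emphasizing is that one must be sure to pick group elements that act on $\{\zeta_1^2, \zeta_2^2, \zeta_1\zeta_2, 1\}$ with enough independence to separate the four coefficient relations. Rotations in $SO(2,\C)$ mix these monomials with each other, which is why the two reflections $g_1$ and $g_2$ (both in the non-identity component of $O(2,\C)$) are the natural choice: $g_1$ isolates $p_3$ via the sign flip on $\zeta_1\zeta_2$, and $g_2$ isolates the difference $p_1 - p_2$ via the interchange of $\zeta_1^2$ and $\zeta_2^2$.
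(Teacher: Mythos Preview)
Your proof is correct. The paper does not actually provide its own proof of this lemma, instead citing \cite{perez-valdes}; your argument using the two reflections $g_1 = \mathrm{diag}(1,-1)$ and $g_2 = \begin{pmatrix} 0 & 1 \\ 1 & 0 \end{pmatrix}$ is clean, self-contained, and exactly the kind of elementary argument one would expect for this statement.
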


For $\mathbf{g} = (g_k)_{k=m-N}^{m+N}$ let $R_k^1(\mathbf{g})(t)$ and $R_k^2(\mathbf{g})(t)$ be the following polynomials:
\begin{equation*}
\begin{array}{l}
R_k^1(\mathbf{g})(t) := S_{a-k}^{\lambda-1} g_k -2(m-k)(a-k-\vartheta_t)g_k + 2(N-k+m)\frac{d}{dt}g_{k+1},\\[6pt]

R_k^2(\mathbf{g})(t) := S_{a-k}^{\lambda-1} g_k +2k(\lambda+a-1 +m-k)g_k - (N+k-m)\frac{d}{dt}g_{k-1} + (N-k+m)\frac{d}{dt}g_{k+1}.
\end{array}
\end{equation*}
These polynomials have a close relation with the polynomials $L_j^{A, \pm}(\widetilde{\mathbf{g}})(t)$ and $L_j^{B, \pm}(\widetilde{\mathbf{g}})(t)$ defined in \eqref{expression-operators-L-AB} as the following lemma shows.
\begin{lemma}\label{lemma-equivalence-Rk-Lj}
Let $\mathbf{g}= (g_k)_{k=m-N}^{m+N}$ with $g_k \in \C[t]$ and define $\widetilde{\mathbf{g}}$ as in \eqref{def-g-prime}. Then, the following equivalence holds:
\begin{equation*}
R_k^1(\mathbf{g}) = R_k^2(\mathbf{g}) = 0 \enspace (\forall k=m-N, \dots, m+N) \Leftrightarrow L_j^{A, \pm}(\widetilde{\mathbf{g}}) = L_j^{B, \pm}(\widetilde{\mathbf{g}}) = 0 \enspace (\forall j=0, \dots, N).
\end{equation*}
\end{lemma}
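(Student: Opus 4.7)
The plan is to establish the equivalence by exhibiting an explicit invertible change of basis between the two systems: each $R_k^i$ will be written as a linear combination of the $L_j^{A,\pm}, L_j^{B,\pm}$ (evaluated at $\widetilde{\mathbf{g}}$), and conversely. Once this is set up in each of the three index ranges $k > m$, $k < m$, and $k = m$, the claimed equivalence follows immediately.

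First I would reindex using the reversal $\widetilde{g}_j = g_{m-j}$ for $j = -N, \dots, N$, which aligns the two sets of equations on the same variables. The key algebraic ingredient is the elementary identity
\begin{equation*}
S_\ell^{\mu} - S_\ell^{\mu^\prime} = 2(\mu - \mu^\prime)(\ell - \vartheta_t),
\end{equation*}
immediate from the definition of $S_\ell^\mu$, which converts the operator $S_{a-k}^{\lambda-1}$ appearing uniformly in $R_k^{1,2}$ into the shifted operators $S_{a \pm m - j}^{\lambda + j -1}$ appearing in the $L$-family. A case-by-case computation should then yield, for $j^\prime := k - m > 0$,
\begin{equation*}
R_k^1 = L_{j^\prime}^{A,-}(\widetilde{\mathbf{g}}), \qquad R_k^2 - R_k^1 = L_{j^\prime}^{B,-}(\widetilde{\mathbf{g}}),
\end{equation*}
and for $j := m - k > 0$,
\begin{equation*}
R_k^1 = L_j^{A,+}(\widetilde{\mathbf{g}}) + 2L_j^{B,+}(\widetilde{\mathbf{g}}), \qquad R_k^2 = L_j^{A,+}(\widetilde{\mathbf{g}}) + L_j^{B,+}(\widetilde{\mathbf{g}}).
\end{equation*}
The $2\times 2$ linking matrix between $(R_k^1, R_k^2)$ and $(L_j^{A,\pm}, L_j^{B,\pm})$ is invertible in both cases (unitriangular for $k > m$, with determinant $-1$ for $k < m$), so at each off-centre index the two systems have the same solution set.

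The main obstacle is the endpoint $k = m$ (i.e.\ $j = 0$). Here $R_m^1 = L_0^{A,-}(\widetilde{\mathbf{g}})$ holds by inspection, but $R_m^2 - R_m^1$ has no direct $L^{B}$ counterpart since $L_0^{B,\pm}$ are set to $0$ by convention. My plan to resolve this is to observe that $L_0^{A,+}$ and $L_0^{A,-}$ both use $S_\ell^{\mu}$ with the \emph{same} $\mu = \lambda-1$ but different $\ell = a \pm m$; the differential parts thus cancel in the difference, leaving only the scalar $(a+m)(a+m+2\lambda-2) - (a-m)(a-m+2\lambda-2) = 4m(\lambda + a - 1)$. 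A short bookkeeping then gives the missing identity $R_m^2 - R_m^1 = \tfrac{1}{2}\bigl(L_0^{A,+} - L_0^{A,-}\bigr)(\widetilde{\mathbf{g}})$, which closes both directions of the equivalence at the central index (recovering $L_0^{A,+} = 2R_m^2 - R_m^1$ in the reverse direction). Apart from this endpoint subtlety, the lemma reduces to a mechanical comparison of coefficients driven by the intertwining identity above.
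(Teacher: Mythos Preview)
Your proposal is correct and follows essentially the same approach as the paper: both arguments use the reindexing $\widetilde{g}_j = g_{m-j}$ together with the shift identity $S_\ell^{\mu} - S_\ell^{\mu'} = 2(\mu-\mu')(\ell-\vartheta_t)$ to establish the same invertible linear relations (e.g.\ $2R_k^2 - R_k^1 = L_{m-k}^{A,+}$, $R_k^1 - R_k^2 = L_{m-k}^{B,+}$ for $k<m$; $R_k^1 = L_{k-m}^{A,-}$, $R_k^2 - R_k^1 = L_{k-m}^{B,-}$ for $k>m$), and your endpoint computation $R_m^2 - R_m^1 = \tfrac12(L_0^{A,+} - L_0^{A,-})$ is exactly the content of the paper's two identities $R_m^1 = L_0^{A,-}$ and $2R_m^2 - R_m^1 = L_0^{A,+}$ at $k=m$.
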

\begin{proof}
First, observe that by the definition of $\widetilde{\mathbf{g}}$, we have $\widetilde{g}_{\pm j} = g_{m\mp j}$ for $j = 0, \ldots, N$, or equivalently, $\widetilde{g}_{m-k} = g_{k}$ for $k = m-N, \ldots, m+N$.

Suppose that $k = m-N, m-N+1, \ldots, m$. Then, a direct computation by using \eqref{Slmu-identity-1} repeatedly shows that
\begin{equation*}
\begin{aligned}
2R_k^2(\mathbf{g}) - R_k^1(\mathbf{g}) = & \enspace S_{a-k}^{\lambda-1}g_k + 2(m-k)(a-k-\vartheta_t)g_k \\
&+ 4k(\lambda + a-1 + m-k)g_k - 2(N+k-m)\frac{d}{dt}g_{k-1}\\
= & \enspace S_{a-k}^{\lambda + m-k - 1}g_k + 4k(\lambda + a-1 + m-k)g_k - 2(N+k-m)\frac{d}{dt}g_{k-1}\\
= & \enspace S_{a+k}^{\lambda +m-k-1}g_k - 2(N+k-m)\frac{d}{dt}g_{k-1},
\end{aligned}
\end{equation*}
which is precisely $L_{j}^{A, +}(\widetilde{\mathbf{g}})$ with $j = m-k = 0, 1, \ldots, N$. On the other hand, for $k = m, m+1, \ldots, m+N$, and again by \eqref{Slmu-identity-1}, we have that
\begin{equation*}
\begin{aligned}
R_k^1(\mathbf{g}) &= S_{a-k}^{\lambda-1} g_k -2(m-k)(a-k-\vartheta_t)g_k + 2(N-k+m)\frac{d}{dt}g_{k+1}\\
& = S_{a-k}^{\lambda-m+k -1} g_k + 2(N-k+m)\frac{d}{dt}g_{k+1},
\end{aligned}
\end{equation*}
which is precisely $L_{j}^{A, -}(\widetilde{\mathbf{g}})$ with $j = k-m = 0, 1, \ldots, N$. 

Similarly, for $k = m-N, m-N+1, \ldots, m-1$, we have that
\begin{equation*}
\begin{aligned}
R_k^1(\mathbf{g}) - R_k^2(\mathbf{g}) = &\enspace 2(-m(\lambda + a-1) + (m-k)(\lambda - 1 + \vartheta_t))g_k \\
&+ (N+m-k)\frac{d}{dt}g_{k+1} + (N-m+k)\frac{d}{dt}g_{k-1},
\end{aligned}
\end{equation*}
which is $L_{j}^{B, +}(\widetilde{\mathbf{g}})$ with $j = m-k = 1, 2, \ldots, N$. The identity above is also valid for $k = m+1, m+2, \ldots, m+N$, which is equal to $-L_{j}^{B, -}(\widetilde{\mathbf{g}})$ with $j = k-m = 1, 2, \ldots, N$. Thus, we have proved the following:
\begin{equation*}
\begin{aligned}
2R_k^2(\mathbf{g}) - R_k^1(\mathbf{g}) &= L_{j}^{A, +}(\widetilde{\mathbf{g}}) & \text{ for } j = 0, 1, \ldots, N, \enspace k = m-j,\\
R_k^1(\mathbf{g}) &= L_{j}^{A, -}(\widetilde{\mathbf{g}}) & \text{ for } j = 0, 1, \ldots, N, \enspace k = m+j,\\
R_k^1(\mathbf{g}) - R_k^2(\mathbf{g}) &= L_{j}^{B, +}(\widetilde{\mathbf{g}}) & \text{ for } j = 1, 2, \ldots, N, \enspace k = m-j,\\
R_k^1(\mathbf{g}) - R_k^2(\mathbf{g}) &= -L_{j}^{B, -}(\widetilde{\mathbf{g}}) & \text{ for } j = 1, 2, \ldots, N, \enspace k = m+j.
\end{aligned}
\end{equation*}
The lemma clearly follows now.
\end{proof}
The following proposition leads to the proof of Theorem \ref{Thm-findingequations}.
\begin{prop}\label{prop-equivalence-Ms-Rk}
Let $M_s \enspace (s = 0, \ldots, 2N)$ be the vector coefficients defined in {\normalfont{(\ref{decomposition-Ms})}} and let $\mathbf{g}= (g_k)_{k=m-N}^{m+N}$. Then, the following holds:
\begin{equation*}
M_{N+k-m} = 0 \Leftrightarrow R_k^1(\mathbf{g}) = R_k^2(\mathbf{g}) = 0, \text{ for all } k= m-N, \ldots, m+N. 
\end{equation*}
\end{prop}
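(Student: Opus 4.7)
The plan is to read off the proposition directly from the explicit formula for $M_s(\psi)$ in Lemma \ref{lemma-formulae-Ms-final} by applying the $O(2,\C)$-invariant factorization lemma (Lemma \ref{lemma-invariant-polynomials}). Fix $k \in \{m-N, \dots, m+N\}$ and set $s = N+k-m$. Writing the bracket in Lemma \ref{lemma-formulae-Ms-final} as
\begin{equation*}
M_s(\psi) = (\zeta_1 + i\zeta_2)^{k-1}\bigl[\zeta_1^2 P_1 + \zeta_2^2 P_2 + \zeta_1\zeta_2 P_3 + P_4\bigr],
\end{equation*}
where
\begin{align*}
P_1 & := T_{a-k-2}\bigl(S_{a-k}^{\lambda-1}g_k + (N-k+m)\tfrac{d}{dt}g_{k+1}\bigr),\\
P_2 & := T_{a-k-2}\bigl(2(m-k)(a-k-\vartheta_t)g_k - (N-k+m)\tfrac{d}{dt}g_{k+1}\bigr),\\
P_3 & := T_{a-k-2}\bigl(iS_{a-k}^{\lambda-1}g_k + 2i(k-m)(a-k-\vartheta_t)g_k + 2i(N-k+m)\tfrac{d}{dt}g_{k+1}\bigr),\\
P_4 & := T_{a-k}\bigl(2k(\lambda+a-1+m-k)g_k - (N+k-m)\tfrac{d}{dt}g_{k-1}\bigr).
\end{align*}
Since each $T_b$ lands in $\bigoplus_{2b_1+b_2=b}\Pol^{b_1}[\zeta_1^2+\zeta_2^2]\otimes\Pol^{b_2}[\zeta_3]$ by \eqref{bijection-T_b}, every $P_i$ is an $O(2,\C)$-invariant polynomial, so Lemma \ref{lemma-invariant-polynomials} applies.

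The key observation is that $P_3$ is not independent: a direct inspection of the three displayed expressions yields the identity
\begin{equation*}
P_3 = i(P_1 - P_2).
\end{equation*}
Consequently, the three conditions $P_1=P_2$, $P_3=0$, $Q_2(\zeta')P_1+P_4=0$ produced by Lemma \ref{lemma-invariant-polynomials} collapse to just two: $P_1=P_2$ and $Q_2(\zeta')P_1+P_4=0$. Since the non-zero factor $(\zeta_1+i\zeta_2)^{k-1}$ is a non-zero-divisor in $\Pol(\C^3)$, we have $M_s(\psi) = 0$ if and only if these two conditions hold.

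Next, I translate these two polynomial identities into identities on $\C[t]$ using the injectivity of $T_b$ on $\Pol_b[t]_{\mathrm{even}}$ and Lemma \ref{lemma-Tsaturation-formulas}(2), which gives $Q_2(\zeta')\,T_{a-k-2}(h) = T_{a-k}(h)$ for any $h$. The condition $P_1 = P_2$, after cancelling $T_{a-k-2}$, becomes
\begin{equation*}
S_{a-k}^{\lambda-1}g_k - 2(m-k)(a-k-\vartheta_t)g_k + 2(N-k+m)\tfrac{d}{dt}g_{k+1} = 0,
\end{equation*}
i.e. $R_k^1(\mathbf{g}) = 0$. The condition $Q_2(\zeta')P_1 + P_4 = 0$, after using Lemma \ref{lemma-Tsaturation-formulas}(2) to rewrite the first summand as $T_{a-k}(S_{a-k}^{\lambda-1}g_k + (N-k+m)\tfrac{d}{dt}g_{k+1})$ and then cancelling $T_{a-k}$, becomes exactly $R_k^2(\mathbf{g}) = 0$. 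Running both implications in reverse is straightforward, giving the claimed equivalence for each fixed $k$; quantifying over $k$ finishes the proof.

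The only non-routine step is spotting the linear dependence $P_3 = i(P_1 - P_2)$, which is what makes Lemma \ref{lemma-invariant-polynomials} yield precisely the two equations $R_k^1=R_k^2=0$ rather than three independent conditions; after that, the remaining work is bookkeeping with $T_b$ and its saturation properties.
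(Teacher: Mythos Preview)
Your proof is correct and follows essentially the same approach as the paper: both define the four $O(2,\C)$-invariant pieces $P_1,\dots,P_4$ (the paper calls them $p_1,\dots,p_4$), observe the linear dependence $P_3 = i(P_1-P_2)$, apply Lemma~\ref{lemma-invariant-polynomials} to reduce to $P_1=P_2$ and $Q_2(\zeta')P_1+P_4=0$, and then use the bijectivity of $T_b$ together with Lemma~\ref{lemma-Tsaturation-formulas}(2) to read off $R_k^1(\mathbf{g})=R_k^2(\mathbf{g})=0$. Your write-up is slightly more explicit in justifying why the $P_i$ are $O(2,\C)$-invariant via \eqref{bijection-T_b}, which the paper leaves implicit.
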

\begin{proof}
Let $k = m-N, \dots, m+N$ and set
\begin{equation*}
\begin{aligned}
p_1 := &\enspace T_{a-k-2}\left(S_{a-k}^{\lambda - 1}g_{k} +(N-k+m)\frac{d}{dt}g_{k+1}\right),\\
p_2 :=  &\enspace T_{a-k-2}\left( 2(m-k)(a-k -\vartheta_t) g_k - (N-k+m) \frac{d}{dt}g_{k+1} \right),\\
p_3 :=  &\enspace T_{a-k-2} \left(iS_{a-k}^{\lambda-1}g_k + 2i(k-m)(a-k - \vartheta_t)g_k + 2i(N-k+m) \frac{d}{dt}g_{k+1} \right) \\
= &\enspace -i(p_2-p_1),\\
p_4 := &\enspace T_{a-k}\left(2k(\lambda+a-1+m-k)g_k - (N+k-m)\frac{d}{dt} g_{k-1}\right).
\end{aligned}
\end{equation*}

From the expression of $M_s$ obtained in Lemma \ref{lemma-formulae-Ms-final} we deduce that 
\begin{equation*}
M_s = 0 \Leftrightarrow \zeta_1^2 p_1 + \zeta_2^2 p_2 + \zeta_1 \zeta_2 p_3 + p_4 = 0,
\end{equation*}
and by Lemma \ref{lemma-invariant-polynomials}, this is equivalent to
\begin{equation*}
p_1 - p_2 = p_3 = \enspace Q_2(\zeta^\prime)p_1 + p_4 = 0,
\end{equation*}
in other words, to
\begin{equation*}
\begin{aligned}
& T_{a-k-2}\left(S_{a-k}^{\lambda - 1}g_{k} - 2(m-k)(a-k -\vartheta_t) g_k +2(N-k+m)\frac{d}{dt}g_{k+1}\right) & = 0,\\
&Q_2(\zeta^\prime)T_{a-k-2}\left(S_{a-k}^{\lambda - 1}g_{k} +(N-k+m)\frac{d}{dt}g_{k+1}\right) &\\ 
& \hspace{1.5cm} + T_{a-k}\left(2k(\lambda+a-1+m-k)g_k - (N-k+m)\frac{d}{dt} g_{k-1}\right) & = 0.
\end{aligned}
\end{equation*}

Since $T_\ell$ is a bijection (see (\ref{bijection-T_b})), and by Lemma \ref{lemma-Tsaturation-formulas} (2), the two equations above are equivalent to
\begin{equation*}
\begin{aligned}
S_{a-k}^{\lambda - 1}g_{k} - 2(m-k)(a-k -\vartheta_t) g_k +2(N-k+m)\frac{d}{dt}g_{k+1} &= 0,\\
S_{a-k}^{\lambda - 1}g_{k} + 2k(\lambda+a-1+m-k)g_k - (N+k-m)\frac{d}{dt} g_{k-1} +(N-k+m)\frac{d}{dt}g_{k+1} &= 0,
\end{aligned}
\end{equation*}
which are nothing but $R_k^1(\mathbf{g}) = R_k^2(\mathbf{g}) = 0$. Thus, the equivalence $M_{N+k-m} = 0 \Leftrightarrow R_k^1(\mathbf{g}) = R_k^2(\mathbf{g}) = 0, \text{ for all } k= m-N, \ldots, m+N$ has been proved.
\end{proof}
We now present a proof of Theorem \ref{Thm-findingequations}.
\begin{proof}[Proof of Theorem \ref{Thm-findingequations}]
As previously noted, (i) holds for all $C \in \n_+^\prime$ if and only if it holds for a single element of $\n_+^\prime$, say $C_1^+ \in \n_+^\prime$ (cf. \cite[Lem. 3.4]{kkp}). In other words, (i) is equivalent to
\begin{equation*}
\left(\widehat{d\pi_\mu}(C_1^+)\otimes \id_{\C_{m,\nu}}\right)\psi = 0,
\end{equation*}
which, in turn, is equivalent to $M_s(\psi) = M_s^\text{scalar}(\psi) + M_s^\text{vect}(\psi) = 0$ $(s=0, \ldots, 2N)$ as noted in (\ref{Ms-equal-zero}). The result now follows from Proposition \ref{prop-equivalence-Ms-Rk} and Lemma \ref{lemma-equivalence-Rk-Lj}.
\end{proof}

\section{Proof of Theorem \ref{Thm-solvingequations}: Solving the System $\Xi(\lambda, a, N, N)$}
\label{section-proof_of_solving_equations}

In this section we prove Theorem \ref{Thm-solvingequations}; in other words, we solve the system (\ref{equation-space}), which is given by the following differential equations for $f_{\pm j}\in \Pol_{a-m\pm j}[t]_\text{{\normalfont{even}}}$, ($j = 0,1, \ldots, N$):
\begin{equation}\label{ODEsystem}
\begin{array}{ll}
(A_j^+) & S_{a+N-j}^{\lambda+j-1} f_j - 2(N-j)\frac{d}{dt}f_{j+1} = 0.\\[6pt]

(A_j^-) & S_{a-N-j}^{\lambda+j-1} f_{-j} + 2(N-j)\frac{d}{dt}f_{-j-1} = 0.\\[6pt]

(B_j^+) & 2(-N(\lambda + a-1) + j(\lambda-1 + \vartheta_t))f_j 
+ (N-j)\frac{d}{dt}f_{j+1} +(N+j) \frac{d}{dt}f_{j-1} = 0.\\[6pt]

(B_j^-) & 2(N(\lambda + a-1) + j(\lambda-1 + \vartheta_t))f_{-j}
- (N+j)\frac{d}{dt}f_{-j+1} -(N-j) \frac{d}{dt}f_{-j-1} = 0.
\end{array}
\end{equation}
Recall that $j \in \{0, 1, \ldots, N\}$ in $(A_j^\pm)$ while $j \in \{1, \ldots, N\}$ in $(B_j^\pm)$.

The proof is a bit long and uses several technical results, including properties of renormalized Gegenbauer polynomials and imaginary Gegenbauer operators which are listed in the appendix (Section \ref{section-appendix}). Note that even if it is a linear system of ordinary differential equations with polynomial solutions (apparently easy), it is an overdetermined system, since we have $2(2N+1)$ equations and $2N+1$ functions.

In order to solve this system, we follow a strategy divided in three phases. Below we give a brief explanation of what does each phase consist of:

\begin{itemize}[leftmargin=2cm, topsep=3pt]
\setlength{\itemsep}{2pt}
\item[\textbf{Phase 1}:] Solve equations $(A_N^\pm)$ and obtain $f_{\pm N}$ up to two constants, say $q_N^\pm \in \C$.

\item[\textbf{Phase 2}:] Inductively, for $j = 0, \dots, N-1$, obtain $f_{\pm j}$ by solving $(B_j^\pm)$ and by using the expressions of $f_{\pm N}$ obtained in Phase 1. After that, check that these $f_{\pm j}$ satisfy equations $(A_j^{\pm})$. For $f_0$ we obtain two different expressions (one coming from the $+$ side and another coming from the $-$ side).

\item[\textbf{Phase 3}:] Check when the two expressions for $f_0$  obtained in Phase 2 coincide. Here, we obtain a compatibility condition for the constants $q_N^\pm$. 
\end{itemize}

We carry out this three-phase strategy in the following and give a proof of Theorem \ref{Thm-solvingequations} at the end in Section \ref{section-proof_of_solving_equations-subsection}.

After completing Phases 1 and 2, we will have that the space of solutions $\Xi(\lambda, a, N, N)$ has dimension less than or equal to $2$, since the polynomials $f_{\pm j}$ depend on two constants $q_N^\pm$ that a priori can take any value.  In Phase 3 we will deduce that this space has dimension less than or equal to one, since we obtain a compatibility condition (i.e., a linear relation) between $q_N^+$ and $q_N^-$. 

The main goal of this section is to prove that the solution $(f_{-N}, \ldots, f_N)$ of the system $\Xi(\lambda, a, N, N)$ is given (up to constant) by the following formula, which is a restatement of \eqref{solution-all}:
\begin{align}\label{expression-fpmj-final}
f_j(t) = (-i)^jA_{N-j}\Geg_{a+j-N}^{\lambda+N-1}(it), &&
f_{-j}(t) = i^j A_{N+j}\Geg_{a-j-N}^{\lambda+N-1}(it),
\end{align}
where we recall that $A_k$ is given by \eqref{const-A}.

\subsection{Phases 1 and 2: Obtaining $f_{\pm j}$ up to constant}\label{section-phase1}

The first phase is quite simple and straightforward. By taking a look at the system \eqref{ODEsystem} we find that:
\begin{align*}
(A_N^+) \enspace S_{a}^{\lambda + N - 1} f_N = 0, && (A_N^-) \enspace  S_{a-2N}^{\lambda + N - 1} f_{-N} = 0.
\end{align*}
Thus, from Theorem \ref{thm-Gegenbauer-solutions} and Lemma \ref{lemma-relationS-G}, we have
\begin{align}\label{expression_fN-and-f-N}
f_N(t) = q_N^+\Geg_{a}^{\lambda+N-1}(it), && f_{-N}(t) = q_N^-\Geg_{a-2N}^{\lambda+N-1}(it),
\end{align}
for some constants $q_N^\pm \in \C$. This completes Phase 1.

Let us start Phase 2. In this phase, we will obtain the rest of the functions $f_{\pm j}$. Before starting actual arguments, we describe the structure of the system and the hierarchy between its equations, explaining how can we solve them:

\begin{itemize}
\setlength{\itemsep}{1pt}
\item By using the expressions of $f_{\pm N}$ in \eqref{expression_fN-and-f-N}, we obtain $f_{\pm(N-1)}$ by solving $(B_{N}^\pm)$.

\item Next, by using the expressions of $f_{\pm N}$ and $f_{\pm(N-1)}$, we obtain $f_{\pm(N-2)}$ by solving $(B_{N-1}^\pm)$.

\item We repeat this process for any $0 < j < N$ and obtain $f_{\pm(j-1)}$ from the previous two functions $f_{\pm j}, f_{\pm (j+1)}$ by solving $(B_j^\pm)$.

\item At the end, for $j = 0$ we obtain $f_0$ by the same procedure, but this time we obtain two expressions; one that comes from the expressions of $f_j$ and another that comes from the expressions of $f_{-j}$. We will have to check that these two expressions coincide in Phase 3.
\end{itemize}
Therefore, the functions $f_{\pm j}$ can be obtained recursively by solving $(B_j^\pm)$. If we represent the order in which we obtain these functions in a diagram, we have the following (we put a zero in both sides to represent that we start from \lq\lq nothing\rq\rq):
\begin{equation}\label{diagram-simple}
\overbrace{0 \rightarrow f_{-N}}^\text{Phase 1} 
\underbrace{\rightarrow f_{-N+1} \rightarrow \cdots \rightarrow f_{-1} \rightarrow}_\text{Phase 2}
\overbrace{f_{0}}^\text{Phase 3}
\underbrace{\leftarrow f_1 \leftarrow \cdots \leftarrow f_{N-1} \leftarrow}_\text{Phase 2}
\overbrace{f_N \leftarrow 0}^\text{Phase 1}
\end{equation}

Until now, we have not talked about equations $(A_j^\pm)$, and in order to solve the system we have to prove that these equations are also satisfied. We do this also in Phase 2 while obtaining $f_{\pm j}$ recursively. More concretely, when in the step $j$ we solve $(B_j^\pm)$ and obtain $f_{\pm(j-1)}$ by using the expressions of $f_{\pm j}$ and $f_{\pm(j+1)}$, we actually obtain $f_{\pm(j-1)}$ up to subtraction by some constant term $c_{j-1}^\pm \in \C$ that comes from integrating $(B_j^\pm)$. Before going to the next step, we check that the obtained $f_{\pm(j-1)}$ solves $(A_{j-1}^\pm)$ if and only if $c_{j-1}^\pm = 0$. By doing this we obtain a complete expression of the functions $f_{\pm j}$ while checking that both $(A_j^\pm)$ and $(B_j^\pm)$ are satisfied. Therefore, the actual order in which we solve the equations is as follows:
\begin{equation*}
\begin{tikzcd}
(A_N^-) \arrow[d] &[-0.8cm] (A_{N-1}^-) \arrow[d] &[-0.8cm]\cdots &[-0.8cm] (A_1^-) \arrow[d]  &[-0.8cm](A_0^-) &[-0.8cm] (A_{0}^+) &[-0.8cm] (A_{1}^+) \arrow[d] &[-0.8cm] \cdots &[-0.8cm] (A_{N-1}^+) \arrow[d] &[-0.8cm] (A_N^+) \arrow[d]\\[-0.2cm]
(B_N^-) \arrow[ru] & (B_{N-1}^-) \arrow[ru] & \cdots \arrow[ru] &(B_1^-) \arrow[ru] &[-0.8cm] &[-0.8cm] &[-0.8cm] (B_{1}^+) \arrow[lu] &[-0.8cm] \cdots \arrow[lu] &[-0.8cm] (B_{N-1}^+) \arrow[lu] &[-0.8cm] (B_N^+) \arrow[lu]
\end{tikzcd}
\end{equation*}

In Figures \ref{diagram_N_1} to \ref{diagram_N_4} we write a little more complex diagrams for several values of $N \in \N$ that represent the hierarchy of the system in a more complete way than the diagram \eqref{diagram-simple}.

\begin{figure}
\centering
\includegraphics[scale=0.65]{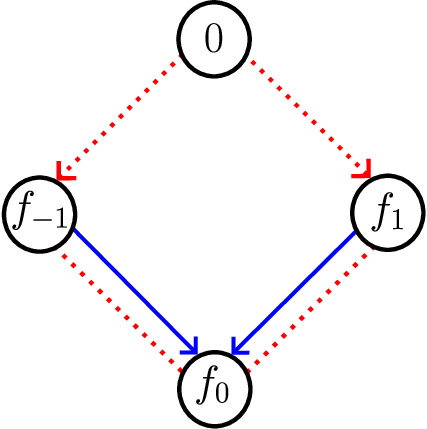}
\caption{Hierarchy for $N = 1$}\label{diagram_N_1}
\end{figure}
\begin{figure}
\centering
\includegraphics[scale=0.65]{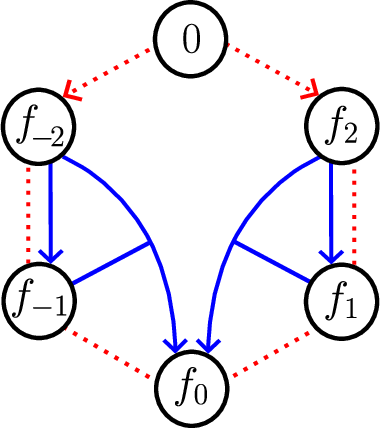}
\caption{Hierarchy for $N = 2$}\label{diagram_N_2}
\includegraphics[scale=0.65]{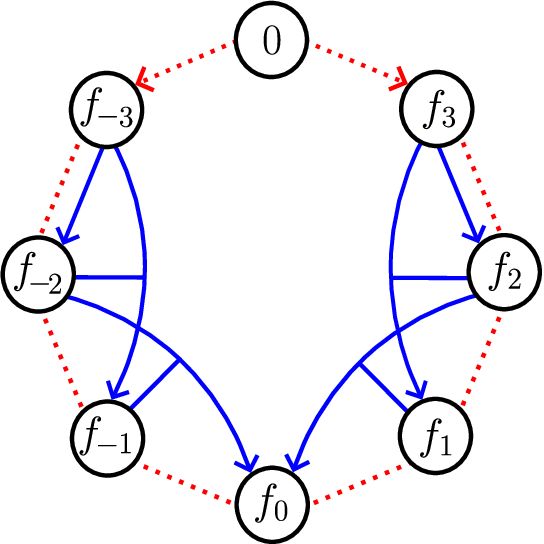}
\caption{Hierarchy for $N = 3$}\label{diagram_N_3}
\includegraphics[scale=0.65]{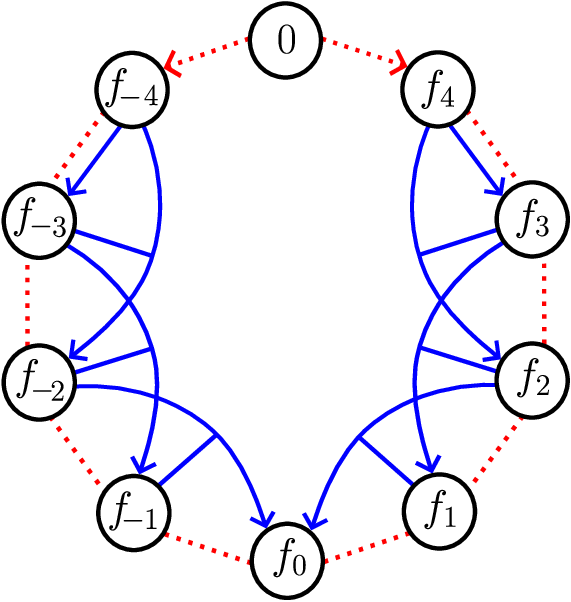}
\caption{Hierarchy for $N = 4$}\label{diagram_N_4}
\end{figure}

Concretely, we write the functions as vertices in a graph (including 0, that can be thought of as the starting point), and represent equations $(A_{j}^\pm)$ and $(B_{j}^\pm)$ as edges linking the functions that they relate. Equations $(A_{j}^\pm)$ are written as dotted red lines, while $(B_{j}^\pm)$ are written as solid blue lines. Moreover, between these edges, we chose a few and write them as arrows, representing the order in which we solve the equations of the system. The first two arrows are dotted and red (corresponding to solving $(A_N^\pm)$ in Phase 1), and the rest are solid and blue (since we obtain the rest of the functions through solving $(B_j^\pm)$ in Phase 2). We remark that in contrast with the dotted red edges (equations $(A_j^\pm)$), the solid blue arrows (equations $(B_j^\pm)$) link three vertices (except for the first one associated to $(B_N^\pm)$, that links only two), as one can deduce from \eqref{ODEsystem}.

Now that we have explained the hierarchy of the system and the order in which we obtain the polynomials $f_{\pm j}$, we prove it in detail in the results below.\\

For any $j = 0, 1, \ldots, N$ let $\Gamma_j^\pm$ be the following gamma factors:
\begin{align*}
\Gamma_j^+ := \frac{\Gamma\left(\lambda+N-1+\left[\frac{a+1}{2}\right]\right)}{\Gamma\left(\lambda+N-1+\left[\frac{a+j-N+1}{2}\right]\right)}, && \Gamma_j^- := \frac{\Gamma\left(\lambda+N-1+\left[\frac{a-2N+1}{2}\right]\right)}{\Gamma\left(\lambda+N-1+\left[\frac{a+j-3N+1}{2}\right]\right)}.
\end{align*}
Observe that we have
\begin{align*}
& \Gamma_j^+ = \begin{cases}
\gamma(\lambda+N-1, a-1)\gamma(\lambda+N-1, a-2) \cdots \gamma(\lambda+N-1, a+j-N), & \text{ if } 0 \leq j < N,\\
1, & \text{ if } j = N.
\end{cases}\\
& \Gamma_j^- = \begin{cases}
\gamma(\lambda, a-3)\gamma(\lambda, a-4) \cdots \gamma(\lambda, a+j-N-2), & \text{ if } 0 \leq j < N,\\
1, & \text{ if } j = N.
\end{cases}
\end{align*}
Here, $\gamma(\mu, \ell)$ is defined as in \eqref{gamma-def}. In particular, we have
\begin{equation}\label{gamma-recurrence-relation}
\begin{aligned}
&\Gamma_j^+ = \Gamma_{j+1}^+ \gamma(\lambda+N-1, a+j-N), \text{ and}
&&\Gamma_j^- = \Gamma_{j+1}^- \gamma(\lambda, a+j-N-2),
\end{aligned}
\end{equation}
for all $j = 0, \ldots, N-1$. Thus,
\begin{equation}\label{gamma-equivalence}
\Gamma_j^\pm \neq 0 \text{ for all } j = 0, 1, \ldots, N-1 \Leftrightarrow \Gamma_0^\pm \neq 0.
\end{equation}

We will consider several cases depending on the value of $a$ and whether $\Gamma_0^\pm$ is zero or not. In the following we suppose that $a\geq 2N$. We consider the case $N < a < 2N$ in the final part of this section, and the case $0 \leq a \leq N$ at the end in Section \ref{section-proof_of_solving_equations-subsection}. We start by the following useful lemma.
\begin{lemma} \label{lemma-recurrence-fj}
\begin{itemize}
\item[\normalfont{(1)}] For a given $j=1, \ldots, N-1$, suppose that $f_j$ and $f_{j+1}$ satisfy \eqref{expression-fj+} below, and suppose further that $\Gamma_{j-1}^+ \neq 0$. Then, $f_{j-1}$ solves $(A_{j-1}^+)$ and $(B_j^+)$ if and only if it also satisfies \eqref{expression-fj+}
\begin{equation}\label{expression-fj+}
(-i)^{N-j} \frac{\Gamma\left(\lambda+N-1+\left[\frac{a+1}{2}\right]\right)}{\Gamma\left(\lambda+N-1+\left[\frac{a+j-N+1}{2}\right]\right)} f_j(t) = q_N^+ \Geg_{a+j-N}^{\lambda +N-1}(it).
\end{equation}
\item[\normalfont{(2)}] For a given $j=1, \ldots, N-1$, suppose that $f_{-j}$ and $f_{-j-1}$ satisfy \eqref{expression-fj-} below, and suppose further that $\gamma(\lambda, a+j-N-3) \neq 0$. Then, $f_{-j+1}$ solves $(A_{j-1}^-)$ and $(B_j^-)$ if and only if it also satisfies \eqref{expression-fj-}
\begin{equation}\label{expression-fj-}
i^{N-j} f_{-j}(t) = \frac{\Gamma\left(\lambda+N-1+\left[\frac{a-j-N+1}{2}\right]\right)}{\Gamma\left(\lambda+N-1+\left[\frac{a-2N+1}{2}\right]\right)}q_N^-  \Geg_{a-j-N}^{\lambda +N-1}(it).
\end{equation}
\end{itemize}
\end{lemma}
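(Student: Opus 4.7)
The plan is to prove both parts simultaneously; I will describe (1) in detail, with (2) following by a symmetric argument (the minus-side equations are obtained from the plus-side equations by the substitutions $j \leftrightarrow -j$ and $m \to -m$, so the structural steps are identical).

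First I will tackle the easy direction: assume $f_{j-1}$ satisfies the Gegenbauer expression in \eqref{expression-fj+} and verify that $(A_{j-1}^+)$ and $(B_j^+)$ both hold. For $(A_{j-1}^+)$ this is essentially a direct application of the fact that $\widetilde{C}^{\lambda+N-1}_{a+j-N-1}(it)$ is an eigenfunction of the imaginary Gegenbauer operator $S^{\lambda+N-1}_{a+j-N-1}$ with eigenvalue zero (Theorem \ref{thm-Gegenbauer-solutions} applied after the index shift $\mu = \lambda+j-2 \to \lambda+N-1$ via \eqref{Slmu-identity-1}), combined with the standard derivative identity for $\widetilde{C}^\mu_\ell$ that transfers the derivative on $f_j$ into an adjacent Gegenbauer polynomial. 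Equation $(B_j^+)$ is a three-term linear relation in $f_{j-1},f_j,f_{j+1}$; after inserting the three Gegenbauer expressions it reduces, via the recurrence \eqref{gamma-recurrence-relation} for $\Gamma_j^+$ and a standard contiguous relation for renormalized Gegenbauer polynomials, to a polynomial identity to be checked once and for all.

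For the nontrivial direction, I will use $(B_j^+)$ as the defining equation for $f_{j-1}$. Since the coefficient $(N+j)$ of $\frac{d}{dt}f_{j-1}$ is nonzero for $j \geq 1$, rewriting $(B_j^+)$ yields
\[
(N+j)\frac{d}{dt}f_{j-1} = -2\bigl(-N(\lambda+a-1) + j(\lambda-1+\vartheta_t)\bigr)f_j - (N-j)\frac{d}{dt}f_{j+1},
\]
and integrating produces $f_{j-1}$ uniquely up to an additive constant $c_{j-1}^+$. Plugging the inductive hypotheses on $f_j,f_{j+1}$ into the right-hand side and applying the same Gegenbauer contiguous relations backwards expresses the antiderivative as a scalar multiple of $\widetilde{C}^{\lambda+N-1}_{a+j-N-1}(it)$, giving the desired form up to the constant $c_{j-1}^+$.

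It remains to force $c_{j-1}^+=0$, and this is the step where I expect the main obstacle. Substituting this provisional $f_{j-1}$ into $(A_{j-1}^+)$ and using the known expression of $f_j$ produces an identity of the form $c_{j-1}^+ \cdot (\text{eigenvalue factor}) = 0$, where the eigenvalue factor arises from applying $S^{\lambda+j-2}_{a+N-j+1}$ to the constant function (which is not annihilated by the zeroth-order term when $a-j+N+1 > 0$) combined with the gamma factor $\Gamma_{j-1}^+$ coming from the normalization of the Gegenbauer polynomial. The hypothesis $\Gamma_{j-1}^+ \neq 0$, together with the assumption $a \geq 2N$ that guarantees all the relevant polynomial degrees are positive, ensures this eigenvalue factor does not vanish, whence $c_{j-1}^+ = 0$ and \eqref{expression-fj+} holds for $f_{j-1}$. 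The careful bookkeeping of the gamma factors, making sure the normalization in \eqref{expression-fj+} is consistent after passage from $j$ to $j-1$, is the delicate point, and it will be handled by invoking the identity \eqref{gamma-recurrence-relation} directly. Part (2) is then obtained by the parallel argument using $(A_{j-1}^-)$ and $(B_j^-)$, with the corresponding hypothesis $\gamma(\lambda, a+j-N-3) \neq 0$ playing the role of $\Gamma_{j-1}^+ \neq 0$.
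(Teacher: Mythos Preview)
Your overall strategy matches the paper's: use $(B_j^+)$ to determine $f_{j-1}$ up to an additive constant $c_{j-1}^+$, then use $(A_{j-1}^+)$ to kill that constant. The Gegenbauer manipulations you outline (derivative identities, three-term relation, the bookkeeping via \eqref{gamma-recurrence-relation}) are exactly the ones the paper carries out.

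There is, however, a genuine gap in your elimination of $c_{j-1}^+$. Substituting into $(A_{j-1}^+)$ reduces to
\[
(2\lambda + N + j + a - 3)\,c_{j-1}^+ = 0,
\]
and you claim $\Gamma_{j-1}^+ \neq 0$ forces the scalar factor to be nonzero. But $\Gamma_{j-1}^+ = \Gamma_j^+ \cdot \gamma(\lambda+N-1, a+j-1-N)$, and when $a+j-N$ is even the factor $\gamma(\lambda+N-1, a+j-1-N)$ equals $1$ identically, so $\Gamma_{j-1}^+ \neq 0$ tells you nothing about $2\lambda + N + j + a - 3$. In that parity case the paper instead invokes the constraint $f_{j-1} \in \Pol_{a-N+j-1}[t]_{\text{even}}$: since $a-N+j-1$ is then odd, $f_{j-1}$ is built from odd powers of $t$ and cannot carry a constant term, forcing $c_{j-1}^+ = 0$ directly. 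Only in the complementary case $a+j-N$ odd does $\Gamma_{j-1}^+ \neq 0$ give $2\lambda + N + j + a - 3 = 2\gamma(\lambda+N-1, a+j-1-N) \neq 0$. You need this parity split; the hypothesis $a \geq 2N$ that you invoke does not substitute for it. The same two-case argument is required in part (2) for $c_{j-1}^-$.
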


\begin{proof}
(1) For a given $j = 1, \ldots, N-1$, suppose that $f_j$ and $f_{j+1}$ satisfy \eqref{expression-fj+}. Let us prove that $f_{j-1}$ solves $(B_{j}^+)$ and $(A_{j-1}^+)$ if and only if it also satisfies \eqref{expression-fj+}. From \eqref{gamma-recurrence-relation} and from $\Gamma_{j-1}^+ \neq 0$, we deduce that $\Gamma_j^+ \gamma(\lambda+N-1, a+j-1-N) \neq 0$. In particular,
\begin{equation*}
(-i)^{N-j}\Gamma_{j}^+ = \left((-i)^{N-j-1}\Gamma_{j+1}^+\right)(-i)\gamma(\lambda + N-1, a+j-N) \neq 0
\end{equation*}
Now, by \eqref{derivative-Gegenbauer-1}, \eqref{derivative-Gegenbauer-2} and \eqref{gamma-product-property}, equation $(B_j^+)$ multiplied by the constant above amounts to
\begin{align*}
-(-i)^{N-j}\Gamma_{j}^+(N+j)\frac{d}{dt}f_{j-1} =& 
-2q_N^+(N-j)(\lambda + a+j-1)\Geg_{a+j-N}^{\lambda+N-1}(it) + 4q_N^+j\Geg_{a+j-N-2}^{\lambda+N}(it)\\
 & + 2q_N^+(N-j)\left(\lambda+N-1 + \left[\frac{a+j-N+1}{2}\right]\right)\Geg_{a+j-N}^{\lambda+N}(it).
\end{align*}
The right-hand side can be simplified by using the three-term relation \eqref{KKP-1}, obtaining
\begin{equation*}
2q_N^+(N+j)\Geg_{a+j-N-2}^{\lambda+N}(it).
\end{equation*}
Now, dividing both sides by $(N+j)$ leads us to
\begin{equation*}
-(-i)^{N-j}\Gamma_{j}^+\frac{d}{dt}f_{j-1} = 2q_N^+\Geg_{a+j-N-2}^{\lambda+N}(it).
\end{equation*}
By using \eqref{derivative-Gegenbauer-1} and multiplying both sides by $i\gamma(\lambda+N-1, a+j-N-1) \neq 0$, we can integrate $f_{j-1}$ and obtain
\begin{equation*}
(-i)^{N-j+1}\Gamma_{j-1}^+f_{j-1}(t) = q_N^+\Geg_{a+j-N-1}^{\lambda+N-1}(it) + c_{j-1}^+,
\end{equation*}
for some constant $c_{j-1}^+ \in \C$. Now, we show that this $f_{j-1}$ solves $(A_{j-1}^+)$ if and only if $c_{j-1}^+ = 0$. From the definition, $(A_{j-1}^+)$ is satisfied if and only if
\begin{equation*}
S_{a+N-j+1}^{\lambda+j-2} f_{j-1} - 2(N-j+1)\frac{d}{dt}f_j = 0.
\end{equation*}
Multiplying this equation by 
\begin{equation*}
(-i)^{N-j+1}\Gamma_{j-1}^+ = \left((-i)^{N-j}\Gamma_j^+\right)(-i)\gamma(\lambda+N-1, a+j-1-N) \neq 0,
\end{equation*}
and using the expression of $f_j$, \eqref{derivative-Gegenbauer-1} and \eqref{derivative-Gegenbauer-2}, we have
\begin{align*}
&S_{a+N-j+1}^{\lambda+j-2}\left(q_N^+\Geg_{a+j-N-1}^{\lambda+N-1}(it) + c_{j-1}\right) \\
&- 4q_N^+(N-j+1)\left(\lambda+N-1 \left[\frac{a+j-N}{2}\right]\right)\Geg_{a+j-N-1}^{\lambda+N}(it) = 0.
\end{align*}
Now, by using \eqref{Slmu-identity-2} for $d = N-j+1$, this amounts to
\begin{multline*}
(a+N-j+1)(2\lambda + N + j + a-3)c_{j-1} \\
+ 4(N-j+1)\Geg_{a+j-N-3}^{\lambda + N-1}(it)
+ 4(N-j+1)(\lambda+a+j-2)\Geg_{a+j-N-1}^{\lambda+N}(it) \\
- 4q_N^+(N-j+1)\left(\lambda+N-1 \left[\frac{a+j-N}{2}\right]\right)\Geg_{a+j-N-1}^{\lambda+N}(it) = 0.
\end{multline*}
The last three terms vanish thanks to the three-term relation \eqref{KKP-1}. Hence, $(A_{j-1}^+)$ is satisfied if and only if
\begin{equation}\label{constant-cj-1}
(2\lambda + N + j + a-3)c_{j-1}^+ = 0.
\end{equation}
If $a + j - N$ is even, $c_{j-1}^+ = 0$ necessarily by the condition $f_{j-1} \in \Pol_{a-N+j-1}[t]_\text{even}$. In particular, \eqref{constant-cj-1} holds. On the other hand, if $a + j - N$ is odd, from $\Gamma_{j-1}^+ \neq 0$ we have
\begin{equation*}
2\gamma(\lambda +N-1, a+j-1-N) = 2\lambda + N + j + a - 3 \neq 0.
\end{equation*}
Hence, \eqref{constant-cj-1} holds if and only if $c_{j-1}^+ = 0$. In both cases we have proved that $c_{j-1}^+ = 0$ necessarily. Hence, $f_{j-1}$ is given by \eqref{expression-fj+} and it satisfies both $(B_j^+)$ and $(A_{j-1}^+)$.

(2) The proof of the second statement is very similar to that of the first one. However, there are some differences.  For a given $j = 1, \ldots, N-1$ suppose that $f_{-j}$ and $f_{-j-1}$ satisfy \eqref{expression-fj-}. Let us prove that $f_{-j+1}$ solves $(B_j^-)$ and $(A_{j-1}^-)$ if an only if it is given by \eqref{expression-fj-}. We proceed as before. A direct computation by using \eqref{derivative-Gegenbauer-2} and \eqref{KKP-1} shows that $(B_j^-)$ amounts to
\begin{align*}
i^{N-j}\frac{d}{dt}f_{-j+1} &=\\
& 2q_N^-\frac{\Gamma\left(\lambda+N-1+\left[\frac{a-j-N+1}{2}\right]\right)}{\Gamma\left(\lambda+N-1+\left[\frac{a-2N+1}{2}\right]\right)}\left(\lambda+N-1 +\left[\frac{a-N-j+1}{2}\right]\right)\Geg_{a-j-N}^{\lambda +N}(it),
\end{align*}
which by \eqref{gamma-product-property} and \eqref{derivative-Gegenbauer-1} amounts to
\begin{equation*}
i^{N-j+1}f_{-j+1}(t) = q_N^-\frac{\Gamma\left(\lambda+N-1+\left[\frac{a-j-N+2}{2}\right]\right)}{\Gamma\left(\lambda+N-1+\left[\frac{a-2N+1}{2}\right]\right)}\Geg_{a-j+1-N}^{\lambda +N-1}(it) + c_{j-1}^-,
\end{equation*}
for some constant $c_{j-1}^- \in \C$. Now, by \eqref{Slmu-identity-1} for $d = N-j+1$ and \eqref{derivative-Gegenbauer-2}, $(A_{j-1}^-)$ is easily shown to be equivalent to
\begin{equation}\label{constant-cj-1-}
(a-N-j+1)(2\lambda+a+j-N-3)c_{j-1}^- = 0.
\end{equation}
As before, if $a-j-N$ is even, $c_{j-1}^- = 0$ necessarily by the condition $f_{-j+1} \in \Pol_{a-N-j+1}[t]_\text{even}$. In particular \eqref{constant-cj-1-} is satisfied.
On the other hand, if $a-j-N$ is odd, by $\gamma(\lambda, a+j-N-3) \neq 0$ we have
\begin{equation*}
2\gamma(\lambda, a+j-N-3) = 2\lambda+a+j-N-3 \neq 0.
\end{equation*}
Therefore, in both cases, \eqref{constant-cj-1-} is satisfied if and only if $c_{j-1}^- = 0$, showing that $f_{-j+1}$ is given by \eqref{expression-fj-} and that it solves $(B_j^-)$ and $(A_{j-1}^-)$.
\end{proof}

As a consequence of the previous lemma we have the following result, that allows us to complete Phase 2 when $\Gamma_0^\pm \neq 0$.

\begin{lemma}\label{lemma-expressions-fj}
\begin{itemize}
\item[\normalfont{(1)}] Suppose that $f_{N}$ is given by \eqref{expression_fN-and-f-N} and suppose further that $\Gamma_0^+ \neq 0$. Then,
$f_0, f_1, \ldots, f_{N}$ solve $(A_j^+)$ and $(B_{j}^+)$ (for all $j = 0, \ldots, N$) if and only if $f_j$ is given by \eqref{expression-fj+} for all $j = 0, \ldots, N$.
\item[\normalfont{(2)}] Suppose that $f_{-N}$ is given by \eqref{expression_fN-and-f-N} and suppose further that $\Gamma_0^- \neq 0$. Then,  $f_{-N}, f_{1-N}, \ldots, f_0$ solve $(A_j^-)$ and $(B_{j}^-)$ (for all $j = 0, \ldots, N$) if and only if $f_{-j}$ is given by \eqref{expression-fj-} for all $j = 0, \ldots, N$.
\end{itemize}
\end{lemma}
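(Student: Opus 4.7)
The approach I would take is downward induction on $j$, from $j = N$ to $j = 0$, with Lemma \ref{lemma-recurrence-fj} supplying the induction step. I will sketch part (1); part (2) is identical after swapping sign conventions. The hypothesis $\Gamma_0^+ \neq 0$ propagates via \eqref{gamma-equivalence} to $\Gamma_j^+ \neq 0$ for every $j = 0, \ldots, N-1$, so the nonvanishing assumption of Lemma \ref{lemma-recurrence-fj}(1) is available at every stage of the induction. The base case $j = N$ is trivial: $\Gamma_N^+ = 1$ by definition, so \eqref{expression-fj+} at $j = N$ reduces to $f_N(t) = q_N^+ \Geg_a^{\lambda+N-1}(it)$, which is exactly the output \eqref{expression_fN-and-f-N} of Phase 1.

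The first step of the descent, from $f_N$ to $f_{N-1}$, falls outside the range $j \in \{1, \ldots, N-1\}$ covered by Lemma \ref{lemma-recurrence-fj}(1) and must be handled by hand. Luckily, equation $(B_N^+)$ degenerates: the coefficient $N-j$ of $\tfrac{d}{dt}f_{j+1}$ vanishes at $j = N$, leaving a relation involving only $f_N$ and $f_{N-1}$. I would substitute the known expression for $f_N$, use the derivative identities \eqref{derivative-Gegenbauer-1}--\eqref{derivative-Gegenbauer-2} to rewrite everything in terms of $\Geg_{\bullet}^{\lambda+N-1}(it)$, and integrate once to recover $f_{N-1}$ up to an integration constant $c_{N-1}^+$. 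Requiring that the resulting $f_{N-1}$ satisfy $(A_{N-1}^+)$ then mirrors the end of the proof of Lemma \ref{lemma-recurrence-fj}, yielding a condition of the form $(2\lambda + a + 2N - 3)c_{N-1}^+ = 0$. The dichotomy on the parity of $a$, combined with $\Gamma_{N-1}^+ = \gamma(\lambda+N-1, a-1) \neq 0$, forces $c_{N-1}^+ = 0$, which confirms \eqref{expression-fj+} at $j = N-1$.

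With the base and the first descent in hand, the induction proceeds cleanly: for $j = N-1, N-2, \ldots, 1$ in turn, I feed the pair $(f_j, f_{j+1})$, satisfying \eqref{expression-fj+} by the inductive hypothesis, together with $\Gamma_{j-1}^+ \neq 0$ into Lemma \ref{lemma-recurrence-fj}(1); the lemma then produces $f_{j-1}$ satisfying \eqref{expression-fj+} and simultaneously verifies $(A_{j-1}^+)$ and $(B_j^+)$. After $N-1$ iterations I reach $j = 0$, having certified every equation $(A_j^+)$ for $j = 0, \ldots, N-1$ and $(B_j^+)$ for $j = 1, \ldots, N$ along the way; $(A_N^+)$ was already used in Phase 1 to determine $f_N$. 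Because Lemma \ref{lemma-recurrence-fj}(1) is itself an \emph{iff}, both directions of the equivalence in Lemma \ref{lemma-expressions-fj}(1) follow from a single induction. The main (minor) obstacle is just isolating the initial descent $N \to N-1$ as a separate argument, since Lemma \ref{lemma-recurrence-fj}(1) excludes the endpoint $j = N$; everything beyond that is mechanical bookkeeping, and part (2) is obtained by the mirror argument using Lemma \ref{lemma-recurrence-fj}(2).
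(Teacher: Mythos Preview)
Your proposal is correct and follows essentially the same approach as the paper: handle the initial descent $N \to N-1$ by hand (using the degeneration of $(B_N^+)$, integrating, and killing the constant $c_{N-1}^+$ via $(A_{N-1}^+)$ and the parity argument), then apply Lemma \ref{lemma-recurrence-fj}(1) recursively down to $j=0$ using $\Gamma_j^+\neq 0$ from \eqref{gamma-equivalence}. The only cosmetic difference is that you frame the argument explicitly as a downward induction and stress the \emph{iff} at each step, whereas the paper phrases it as ``repeat recursively''; the mathematical content is the same.
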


Note that \eqref{expression-fj+} and \eqref{expression-fj-} coincide with \eqref{expression_fN-and-f-N} when $j = N$.

\begin{proof}
(1) First, by a similar process to the one used in the proof of Lemma \ref{lemma-recurrence-fj}, we prove that $f_{N-1}$ solves $(B_N^+)$ and $(A_{N-1}^+)$ if and only if it satisfies \eqref{expression-fj+}. By $(B_N^+)$ we have
\begin{equation*}
-\frac{d}{dt}f_{N-1} = (\vartheta_t - a)f_{N} = 2q_N^+\Geg_{a-2}^{\lambda+N}(it),
\end{equation*}
where in the second equality we used \eqref{expression_fN-and-f-N} and \eqref{derivative-Gegenbauer-2}.
We multiply both sides by
\begin{equation*}
-i\Gamma_{N-1}^+ =  -i\gamma(\lambda + N-1, a-1),
\end{equation*}
which is non-zero by the hypothesis $\Gamma_0^+ \neq 0$ and by \eqref{gamma-equivalence}, and integrate the obtained expression by using \eqref{derivative-Gegenbauer-1}, which leads to
\begin{equation*}
-i\gamma(\lambda + N-1, a-1) f_{N-1}(t) = q_N^+\Geg_{a-1}^{\lambda+N-1}(it) + c_{N-1}^+,
\end{equation*}
for some constant $c_{N-1}^+ \in \C$. Let us prove that this $f_{N-1}$ solves $(A_{N-1}^+)$ if and only if $c_{N-1}^+ = 0$. 
From the definition, $(A_{N-1}^+)$ is satisfied if and only if
\begin{equation*}
S_{a+1}^{\lambda+N-2} f_{N-1} - 2\frac{d}{dt}f_N = 0,
\end{equation*}
which multiplied by $-i\gamma(\lambda + N-1, a-1) \neq 0$ amounts to
\begin{equation*}
S_{a+1}^{\lambda+N-2}\left(q_N^+\Geg_{a-1}^{\lambda+N-1} + c_{N-1}^+\right) - 4\left(\lambda + N-1+ \left[\frac{a}{2}\right]\right)q_N^+\Geg_{a-1}^{\lambda+N} = 0,
\end{equation*}
where in the second term we have used \eqref{derivative-Gegenbauer-1} and \eqref{gamma-product-property}. Now, by \eqref{Slmu-identity-2} for $d = 1$, \eqref{derivative-Gegenbauer-2}, and \eqref{Gegen-imaginary}, this can be rewritten as
\begin{multline*}
(a+1)(2\lambda + 2N + a - 3)c_{N-1}^+ + 4q_N^+ \Geg_{a-3}^{\lambda+N}(it) \\+ 4(\lambda + N + a -2)q_N^+\Geg_{a-1}^{\lambda+N-1}
- 4\left(\lambda + N-1+ \left[\frac{a}{2}\right]\right)q_N^+\Geg_{a-1}^{\lambda+N} = 0.
\end{multline*}
The last three terms vanish thanks to the three-term relation \eqref{KKP-1}. Thus, $(A_{N-1}^+)$ is satisfied if and only if
\begin{equation}\label{constant-cN-1}
(2\lambda + 2N + a - 3)c_{N-1}^+ = 0.
\end{equation}
Now, let us take a look at the parity of $a$. If $a$ is even, we deduce that $c_{N-1}^+ = 0$ from the condition $f_{N-1} \in \Pol_{a-1}[t]_\text{even}$. In particular, \eqref{constant-cN-1} holds. On the other hand, if $a$ is odd, we have
\begin{equation*}
0 \neq 2\Gamma_{N-1}^+ = 2\lambda + 2N +a -3.
\end{equation*}
Thus, \eqref{constant-cN-1} holds if and only if $c_{N-1}^+ = 0$. In both cases we have proved that $c_{N-1}^+ = 0$ necessarily, which implies that $f_{N-1}$ is indeed given by \eqref{expression-fj+} and it solves $(B_N^+)$ and $(A_{N-1}^+)$.

Now that the result is proved for $f_N$ and $f_{N-1}$, it suffices to use Lemma \ref{lemma-recurrence-fj}(1) recursively for $j = 1, \ldots, N-1$, which shows the lemma. Note that we can apply Lemma \ref{lemma-recurrence-fj} from $\Gamma_0^+ \neq$ and \eqref{gamma-equivalence}.

(2) The proof of the second statement is analogous to the first one, where we can apply Lemma \ref{lemma-recurrence-fj} due to the condition $\Gamma_0^- \neq 0$.
\end{proof}

\begin{rem} Note that, from the proof of Lemma \ref{lemma-expressions-fj} we deduce that Lemma \ref{lemma-recurrence-fj} is also true for $j= N$, where we define $f_{\pm(N+1)} \equiv 0$.
\end{rem}

With the lemma above, we have completed Phase 2 when $\Gamma_0^\pm \neq 0$. All that remains now is to solve the opposite case (i.e., $\Gamma_0^+\cdot\Gamma_0^- = 0$). \\
One may think that the case $\Gamma_0^+ = 0$ and the case $\Gamma_0^- = 0$ can be done similarly, but the truth is that they are actually quite different. These cases are considered below. Concretely, we consider $\Gamma_0^+ = 0$ in Lemma \ref{lemma-Gamma+zero} and $\Gamma_0^- = 0$ in Lemma \ref{lemma-Gamma-zero}.\\

We start from an easy but important observation. Since we have used the condition $\Gamma_0^+ \neq 0$ in order to have $f_j$, one may expect that the expression \eqref{expression-fj+} does not hold when $\Gamma_0^+ = 0$. However, it turns out that it also holds when $\Gamma_0^+ = 0$ if we redefine the constant $q_N^+$. This will be clear from the next remark and from Lemma \ref{lemma-Gamma+zero} below. 

\begin{rem}\label{rem-normalization-fj+-} 
(1) Note that if $\Gamma_0^+ \neq 0$, then, for some $\widetilde{q_N^+} \in \C$, \eqref{expression-fj+} is equivalent to
\begin{equation}\label{expression-fj+-normalized}
f_j(t) = (-i)^j \frac{\Gamma\left(\lambda+N-1+\left[\frac{a+j-N+1}{2}\right]\right)}{\Gamma\left(\lambda+N-1+\left[\frac{a-N+1}{2}\right]\right)}\widetilde{q_N^+}\Geg_{a+j-N}^{\lambda+N-1}(it).
\end{equation}
In fact, it suffices to define $\widetilde{q_N^+}$ such that
\begin{equation*}
(-i)^N\frac{\Gamma\left(\lambda+N-1+\left[\frac{a+1}{2}\right]\right)}{\Gamma\left(\lambda+N-1+\left[\frac{a-N+1}{2}\right]\right)} \widetilde{q_N^+} = q_N^+. 
\end{equation*}
(2) Analogously, if we define $\widetilde{q_N^-}$ such that
\begin{equation*}
 i^N \widetilde{q_N^-} = q_N^-,
\end{equation*}
then \eqref{expression-fj-} is clearly equivalent to
\begin{equation}\label{expression-fj--normalized}
f_{-j}(t) = i^j \frac{\Gamma\left(\lambda+N-1+\left[\frac{a-j-N+1}{2}\right]\right)}{\Gamma\left(\lambda+N-1+\left[\frac{a-2N+1}{2}\right]\right)}\widetilde{q_N^-}\Geg_{a-j-N}^{\lambda+N-1}(it).
\end{equation}
\end{rem}
Now, we have the following:

\begin{lemma}\label{lemma-Gamma+zero}
Suppose that $\Gamma_0^+ = 0$, or equivalently, that
\begin{equation}\label{Gamma-zero+-condition}
\gamma(\lambda+N-1, a-s) = 0, \enspace \text{ for some } s = 1, 2, \ldots, N.
\end{equation}
Then, $f_0, f_1, \ldots, f_N$ solve $(A_j^+)$ and $(B_j^+)$ for all $j = 0, \ldots, N$ if and only if $f_{j}$ is given by \eqref{expression-fj+-normalized} for all $j = 0, 1, \ldots, N$ for some $\widetilde{q_N^+} \in \C$. In particular $f_N = f_{N-1} = \cdots = f_{N-s+1} \equiv 0$.
\end{lemma}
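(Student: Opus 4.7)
The plan is to mirror the proof of Lemma~\ref{lemma-expressions-fj}(1)---running the downward recursion on $j$ from $j = N$ and invoking Lemma~\ref{lemma-recurrence-fj}(1) at each step---while carefully tracking the one step where the hypothesis $\Gamma_{j-1}^+ \neq 0$ breaks down. Without loss of generality let $s \in \{1, \dots, N\}$ be the smallest integer with $\gamma(\lambda+N-1, a-s) = 0$ and set $j_0 := N-s$; by minimality and \eqref{gamma-equivalence} one then has $\Gamma_j^+ \neq 0$ for all $j > j_0$ whereas $\Gamma_{j_0}^+ = 0$.

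For the implication ``$\Leftarrow$'', I would argue by analytic continuation in $\lambda$: the Gamma-ratio in \eqref{expression-fj+-normalized} is a product of $\gamma$-factors by \eqref{gamma-recurrence-relation}, hence polynomial in $\lambda$, and by Remark~\ref{rem-normalization-fj+-}(1) the full expression coincides with \eqref{expression-fj+} at generic $\lambda$. Lemma~\ref{lemma-expressions-fj}(1) guarantees that the latter solves $(A_j^+)$ and $(B_j^+)$ at generic $\lambda$, and since both the formula and the equations depend polynomially on $\lambda$, the solution property persists at the special values with $\Gamma_0^+ = 0$.

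For ``$\Rightarrow$'', the recursion used in the proof of Lemma~\ref{lemma-expressions-fj}(1) goes through verbatim for $j = N, N-1, \dots, j_0+2$, yielding $f_j$ of the form \eqref{expression-fj+}. Performing the same integration of $(B_{j_0+1}^+)$ once more produces
\begin{equation*}
-(-i)^{N-j_0}\,\Gamma_{j_0}^+\,f_{j_0}(t) \;=\; q_N^+\,\Geg_{a-s}^{\lambda+N-1}(it) + c_{j_0}^+,
\end{equation*}
whose left-hand side vanishes since $\Gamma_{j_0}^+ = 0$. Exploiting this compatibility together with the derivative identities at $\gamma(\lambda+N-1, a-s) = 0$, one deduces $q_N^+ = 0$ and $c_{j_0}^+ = 0$; consequently $f_N = f_{N-1} = \cdots = f_{j_0+1} = 0$, which matches the ``in particular'' claim and is consistent with the vanishing of the Gamma-ratio in \eqref{expression-fj+-normalized} for $j > j_0$. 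With $f_{j_0+1} = 0$, equation $(A_{j_0}^+)$ reduces to $S_{a+s}^{\lambda+j_0-1}f_{j_0} = 0$; using the shift identity \eqref{Slmu-identity-2} together with the same vanishing $\gamma$-factor, one identifies its unique (up to scalar) polynomial solution in $\Pol_{a-s}[t]_\text{even}$ with $\Geg_{a-s}^{\lambda+N-1}(it)$, and renaming the scalar as $\widetilde{q_N^+}$ via Remark~\ref{rem-normalization-fj+-}(1) realises $f_{j_0}$ in the form \eqref{expression-fj+-normalized}. The remaining polynomials $f_{j_0-1}, \dots, f_0$ are then produced by iterating Lemma~\ref{lemma-recurrence-fj}(1); any further zero of $\gamma$ encountered below $j_0$ triggers another instance of the same compatibility argument, which merely enforces additional intermediate vanishings and introduces no new parameters.

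The hard part will be the verification in the middle paragraph that $\Geg_{a-s}^{\lambda+N-1}(it)$ indeed lies in $\ker S_{a+s}^{\lambda+j_0-1}$ at the degenerate parameter, and that it is the unique polynomial solution of that reduced equation in $\Pol_{a-s}[t]_\text{even}$; both of these rest crucially on the appendix identities \eqref{Slmu-identity-2}, \eqref{KKP-1} and the derivative formulae for the renormalized Gegenbauer polynomial, combined with the constraint $\gamma(\lambda+N-1, a-s) = 0$.
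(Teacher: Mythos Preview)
Your overall strategy mirrors the paper's, but there is a genuine gap at the critical step $j=j_0$, together with a misconception about $s$. First, the value of $s$ is necessarily \emph{unique}: $\gamma(\lambda+N-1,a-s)=0$ forces $a-s$ even and $\lambda+N-1+(a-s)/2=0$, which pins $s$ down from $\lambda$ and $a$. There is no ``smallest'' $s$ and no ``further zero of $\gamma$ below $j_0$''; the last sentence of your proposal is moot.

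More seriously, your displayed equation
\[
-(-i)^{N-j_0}\,\Gamma_{j_0}^+\,f_{j_0}(t) \;=\; q_N^+\,\Geg_{a-s}^{\lambda+N-1}(it) + c_{j_0}^+
\]
is obtained from the recursion in Lemma~\ref{lemma-recurrence-fj}(1) precisely by multiplying through by $i\gamma(\lambda+N-1,a-s)$ and then integrating. Since that factor vanishes, both sides are identically zero (indeed $\Gamma_{j_0}^+=\Gamma_{j_0+1}^+\cdot\gamma(\lambda+N-1,a-s)=0$ on the left, and $\frac{d}{dt}\Geg_{a-s}^{\lambda+N-1}(it)=0$ by \eqref{derivative-Gegenbauer-1} on the right), so the equation is vacuous and cannot deliver $q_N^+=0$. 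The paper instead retains the \emph{nonvacuous} relation
\[
-(-i)^{s-1}\Gamma_{N-s+1}^+\,\frac{d}{dt}f_{N-s} \;=\; 2q_N^+\,\Geg_{a-s-1}^{\lambda+N}(it)
\]
coming from $(B_{N-s+1}^+)$, substitutes it (together with the known $f_{N-s+1}$) directly into $(A_{N-s}^+)$, expands $S_{a+s}^{\lambda+N-s-1}$ explicitly, and evaluates at $t=0$; this yields a nonzero multiple of $q_N^+$ and forces $q_N^+=0$. Only \emph{after} that does $(A_{N-s}^+)$ reduce to $S_{a+s}^{\lambda+N-s-1}f_{N-s}=0$, and the identification of its polynomial solution with $\Geg_{a-s}^{\lambda+N-1}(it)$ is immediate because at the degenerate parameter both $\Geg_{a+s}^{\lambda+N-s-1}$ and $\Geg_{a-s}^{\lambda+N-1}$ are nonzero constants---so your ``hard part'' dissolves once this is observed.

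Your analytic-continuation argument for the direction ``$\Leftarrow$'' is correct and somewhat cleaner than the paper's treatment, which handles both implications simultaneously through the explicit recursion.
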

\begin{proof}
By definition (see \eqref{gamma-def}), $\gamma(\lambda+N-1, a-s) = 0$ amounts to
\begin{equation*}
a-s \in 2\N,\text{ and } \lambda+N-1+\frac{a-s}{2}=0.
\end{equation*}
Then, in particular, we have $\gamma(\lambda+N-1, a-r) \neq 0$ for any $r = 1, 2, \ldots, N$, $r\neq s$. By \eqref{gamma-recurrence-relation}, this implies that $\Gamma_{j-1}^+ \neq 0$ for any $j = N-s+2, N-s+3, \ldots, N$. Therefore, by \eqref{expression_fN-and-f-N} and by using Lemma \ref{lemma-recurrence-fj}(1) recursively, we obtain that $f_{j}$ is given by \eqref{expression-fj+} for any $j = N-s+1, N-s+2, \ldots, N$ for some constant $q_N^+ \in \C$. Now, for $j = N-s$, we have that $(B_{N-s+1}^+)$ multiplied by $(-i)^{s-1}\Gamma_{N-s+1}^+ \neq 0$ amounts to
\begin{equation*}
-(-i)^{s-1}\Gamma_{N-s+1}^+\frac{d}{dt}f_{N-s} = 2q_N^+\Geg_{a-s-1}^{\lambda+N}(it).
\end{equation*}
Hence, by \eqref{Gamma-zero+-condition}, $(A_{N-s}^+)$ multiplied by $(-i)^{s-1}\Gamma_{N-s+1}^+$ amounts to
\begin{equation*}
\begin{aligned}
0 =  \enspace &-(-i)^{s-1}\Gamma_{N-s+1}^+\left(S_{a+s}^{\lambda+N-s-1}f_{N-s} - 2s\frac{d}{dt}f_{N-s+1}\right)\\
= \enspace &-\left((1+t^2)\frac{d}{dt} + (2\lambda+2N-2s-1)t\right)2q_N^+\Geg_{a-s-1}^{\lambda+N}(it) + 4isq_N^+\Geg_{a-s}^{\lambda+N}(it)\\
= \enspace  &-4iq_N^+(1+t^2)\Geg_{a-s-2}^{\lambda+N+1}(it) + (2\lambda+2N-2s-1)2q_N^+ t\Geg_{a-s-1}^{\lambda+N}(it)\\
& + 4isq_N^+\Geg_{a-s}^{\lambda+N}(it).
\end{aligned}
\end{equation*}
If we evaluate the above equation in $t = 0$ and divide it by $4i$, we have
\begin{equation*}
\frac{(-1)^\frac{a-s}{2}}{\left(\frac{a-s}{2}\right)!}
\frac{a+s}{2}q_N^+ = 0,
\end{equation*}
which clearly implies $q_N^+ = 0$. Then, $(A_{N-s}^+)$ is satisfied if and only if $q_N^+=0$. This implies in particular that $f_j \equiv 0$ for $j = N-s+1, \ldots, N$. Therefore, $f_{N-s+1}, \ldots, f_N$ satisfy \eqref{expression-fj+-normalized} as
\begin{equation*}
\frac{\Gamma\left(\lambda+N-1+\left[\frac{a+j-N+1}{2}\right]\right)}{\Gamma\left(\lambda+N-1+\left[\frac{a-N+1}{2}\right]\right)} = \gamma(\lambda+N-1, a+j-N-1) \dots \gamma(\lambda+N-1, a-N) = 0,
\end{equation*}
for any $j = N-s+1, \ldots, N$.

Now, if we take a look again at $(A_{N-s}^+)$, by Theorem \ref{thm-Gegenbauer-solutions} and Lemma \ref{lemma-relationS-G} we obtain
\begin{equation*}
S_{a+s}^{\lambda+N-1} f_{N-s} = 0 \Rightarrow f_{N-s}(t) = q_{N-s}^+\Geg_{a+s}^{\lambda+N-s-1}(it),
\end{equation*}
for some $q_{N-s}^+ \in \C$. Note that since $\lambda + N-1 + \frac{a-s}{2} = 0$, the polynomials $\Geg_{a+s}^{\lambda+N-s-1}$ and $\Geg_{a-s}^{\lambda+N-1}$ are constant. Therefore, we can rewrite $f_{N-s}$ as
\begin{equation*}
f_{N-s}(t) = (-i)^{N-s}\frac{\Gamma \left(\lambda+N-1 + \left[\frac{a-s+1}{2}\right]\right)}{\Gamma\left(\lambda+N-1 + \left[\frac{a-N+1}{2}\right]\right)}\widetilde{q_N^+}\Geg_{a-s}^{\lambda+N-1}(it),
\end{equation*}
where $\widetilde{q_N^+} \in \C$ is an appropriate multiple of $q_{N-s}^+$.

Now, we can repeat the proof of Lemma \ref{lemma-recurrence-fj}(1) recursively and obtain $f_j$ for $j = 0, 1, \ldots, N-s-1$. The same argument works since $\gamma(\lambda + N-1, a-r) \neq 0$ for all $r = 1, \ldots, s-1$. The expression obtained can be easily proved to be \eqref{expression-fj+-normalized}, showing that, indeed,  $f_0, f_1, \ldots, f_N$ satisfy $(A_j^+), (B_j^+)$ for all $j = 0, \ldots, N$, completing the proof.
\end{proof}

\begin{rem}\label{rem-Gamma+zero}
An important consequence from the lemma above is that $f_0, \ldots, f_N$ solve $(A_j^+), (B_j^+)$ (for all $j = 0, \ldots, N)$ if and only if \eqref{expression-fj+-normalized} holds for any $j = 0, 1, \ldots, N$ whether $\Gamma_0^+$ vanishes or not.
\end{rem}

Next, we consider the case $\Gamma_0^- = 0$. In this case, we can still apply Lemma \ref{lemma-recurrence-fj}(2) recursively to obtain all $f_{-j}$. However, we cannot assure that the constants $c_j^-$ appearing in this process must vanish in order to solve $(A_j^-)$. That is why in this case we have to carry forward all these constants until the step $j = 0$. It will be after that, in Phase 3, when we will be able to assure that they all must vanish.\\
In order to state the main lemma of this part, let us fix some notation.

Given $j = 0, 1, \ldots, N$, $s = 1, \ldots, N$ and $n = 0, 1, \ldots, N+s$, and given constants $c_{N-s}^-$, $c_{N-s-2}^-$, $\ldots$, $c_{N-s-2\left[\frac{n}{2}\right]}^- \in \C$, we define a polynomial $P_{-j}(t) \in \C[t]$ as follows:
\begin{itemize}[topsep = 5pt]
\item For $j = N-s+1, N-s+2, \ldots, N$ we set $P_{-j} \equiv 0$.
\item For $j = N-s-n = 0, 1, \ldots, N-s$ we set
\begin{equation*}
P_{n+s-N}(t) := \sum_{k=0}^{\left[\frac{n}{2}\right]}c_{N-s-2k}^- \left(\sum_{d=k}^{\left[\frac{n}{2}\right]}M(n, k, d) (it)^{n-2d}\right).
\end{equation*}
Here, the coefficients $M(n,k,d)$ are defined as follows for $0 \leq k \leq \left[\frac{n-1}{2}\right]$ and $k \leq d \leq \left[\frac{n-1}{2}\right]$
\begin{equation*}
M(n,k,d) := \frac{2^{n-2d}(n-2k-1)!(s+n-d-1)_{n-d-k}\left(\frac{2N-s+a-2k}{2}\right)_{n-d-k}}{(d-k)!(n-2d-1)!(n-d-k)!(2N-s-2k)_{n-2k}},
\end{equation*}
where $(a)_\ell$ denotes Pochhammer's falling factorial defined as
\begin{equation*}
(a)_0 = 1 \enspace \text{ and } (a)_\ell = a (a-1) \cdots (a-\ell +1). 
\end{equation*}
When $n$ is even, we define $M(n,k,\frac{n}{2})$ as
\begin{align*}
M(n, k, \frac{n}{2}) = 
\begin{cases}
0, & \text{ for } 0 \leq k \leq \frac{n-2}{2},\\
1, & \text{ for } k = \frac{n}{2}.
\end{cases}
\end{align*}

The definition of the polynomials $P_{-j}$ may seem arbitrary. However, it is the key to solve the system when $\Gamma_0^- = 0$, as the following lemma shows.
\end{itemize}

\begin{lemma}\label{lemma-Gamma-zero}
Suppose that $f_{-N}$ is given by \eqref{expression_fN-and-f-N} and suppose further that $\Gamma_0^- = 0$, or equivalently, that
\begin{equation}\label{Gamma-zero--condition}
\gamma(\lambda+N-1, a-2N-s) = 0, \enspace \text{ for some } s = 1, 2, \ldots, N.
\end{equation}
Then, $f_{-N}, f_{1-N}, \ldots, f_0$ solve $(A_j^-)$ and $(B_j^-)$ for all $j = 0, \ldots, N$ if and only if $f_{-j}$ is given by \eqref{expression-fj--Pj-} below for all $j= 0, 1, \ldots, N$.
\begin{equation}\label{expression-fj--Pj-}
i^{N-j} f_{-j}(t) = q_N^-  \frac{\Gamma\left(\lambda+N-1+\left[\frac{a-j-N+1}{2}\right]\right)}{\Gamma\left(\lambda+N-1+\left[\frac{a-2N+1}{2}\right]\right)}\Geg_{a-j-N}^{\lambda +N-1}(it) + P_{-j}(t),
\end{equation}
where the constants $c_{N-s}^-, c_{N-s-2}^-, \ldots, c_{N-s-2\left[\frac{n}{2}\right]}^- \in \C$ appearing in $P_{n+s-N}$ are any constants satisfying the following relation for all $\ell = 0, 1, \ldots, \left[\frac{n-2}{2}\right]$.
\begin{equation}\label{constants-cj-relation}
\begin{aligned}
&\sum_{k = 0}^{\ell}c_{N-s-2k}^-\Big(2M(n,k,\ell+1)(\ell+1)(2N-s-a-2(\ell+1)) \\
&+ M(n,k,\ell)(n-2\ell)(n-2\ell-1) -2M(n-1, k, \ell)(n+s)(n-2\ell -1)\Big)\\
&+2c_{N-s-(\ell+1)}^-M(n, \ell+1, \ell+1)(\ell+1)(2N-s-a-2(\ell+1)) = 0.
\end{aligned}
\end{equation}
\end{lemma}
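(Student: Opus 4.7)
The plan is to extend the inductive strategy of Lemma \ref{lemma-expressions-fj}(2) to the degenerate case $\Gamma_0^-=0$, carefully tracking the obstruction at the single step where the relevant $\gamma$-factor vanishes, and then propagating the integration constants that appear from that step downward through the system.

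First I would use the factorisation of $\Gamma_0^-$ via \eqref{gamma-recurrence-relation} to note that condition \eqref{Gamma-zero--condition} is equivalent to the vanishing of exactly one of the factors in the product $\Gamma_0^-=\prod \gamma(\lambda,a+j-N-2)$. For the steps $j=N-s+1,\dots,N$ above the critical index the hypotheses of Lemma \ref{lemma-recurrence-fj}(2) still hold, so starting from \eqref{expression_fN-and-f-N} the recursion produces $f_{-j}$ exactly as in \eqref{expression-fj-}, which agrees with \eqref{expression-fj--Pj-} because $P_{-j}\equiv 0$ in this range. At the critical step $j=N-s$, equation $(B_{N-s+1}^-)$ still determines $f_{-N+s}$ up to an integration constant $c_{N-s}^-\in\C$ as in the proof of Lemma \ref{lemma-recurrence-fj}(2), but the compatibility relation $(A_{N-s}^-)$, after the same manipulations that led to \eqref{constant-cj-1-}, collapses to a scalar multiple of $c_{N-s}^-$ whose prefactor is now exactly $\gamma(\lambda+N-1,a-2N-s)=0$. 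Hence $c_{N-s}^-$ remains free and contributes the constant polynomial $P_{s-N}(t)=c_{N-s}^-$, matching the $n=0$ case of the statement since $M(0,0,0)=1$.

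Next I would prove by induction on $n=N-s-j$ that $f_{-j}$ has the form \eqref{expression-fj--Pj-} with $P_{-j}=P_{n+s-N}$. For the inductive step, integrate $(B_{j+1}^-)$ using the inductive expressions for $f_{-j}$ and $f_{-j-1}$: the Gegenbauer parts combine by the three-term relation \eqref{KKP-1} exactly as in Lemma \ref{lemma-recurrence-fj}(2) and produce the Gegenbauer contribution of $f_{-j+1}$, while the polynomial contributions, after applying the basic identities $\vartheta_t(it)^k=k(it)^k$ and $\frac{d}{dt}(it)^k=ik(it)^{k-1}$, should regroup into the sum defining $P_{(n+1)+s-N}$; the integration also introduces a fresh constant, which becomes the next $c_{N-s-2k}^-$ when parity permits, and which is otherwise forced to vanish by the requirement $f_{-j+1}\in\Pol_{a-N-j+1}[t]_\text{even}$. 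Imposing $(A_{j-1}^-)$ on the resulting $f_{-j+1}$ then enforces linear relations among these constants: the Gegenbauer part is annihilated by the identities already used in Lemma \ref{lemma-recurrence-fj}(2), and the polynomial part, expanded in powers of $it$ via \eqref{Slmu-identity-1} and \eqref{Gegen-imaginary}, will give one equation per power of $t$, each of which should simplify to exactly \eqref{constants-cj-relation}.

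The main obstacle is the combinatorial bookkeeping: verifying that the coefficients $M(n,k,d)$ produced by repeatedly integrating and applying $S_{a-k}^{\lambda-1}$ to monomials of the form $(it)^{n-2d}$ match the explicit Pochhammer formula in the statement. This boils down to Pochhammer shift identities combined with the parity splits $n$ even/odd, together with the shift of parameter from $\lambda+j-1$ to $\lambda+N-1$ that is responsible for the specific shape of the Gamma-quotient prefactors. Once the indexing is set up correctly, the algebra inside each inductive step should reduce to routine Gegenbauer and polynomial manipulations of the same flavour as those performed in Lemmas \ref{lemma-recurrence-fj} and \ref{lemma-Gamma+zero}.
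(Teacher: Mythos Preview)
Your proposal is correct and follows essentially the same inductive strategy as the paper's proof: handle $j>N-s$ by Lemma~\ref{lemma-recurrence-fj}(2), pick up the free constant $c_{N-s}^-$ at the degenerate step, then propagate by induction on $n=N-s-j$ with the $M(n,k,d)$ bookkeeping as the main labour. The only notable difference is that in the inductive step the paper, instead of integrating $(B_j^-)$ directly and then checking $(A_{j-1}^-)$ as you describe, first uses the identity $S_{a+s+n-2N}^{\lambda+N-s-n-1}P_{n+s-N}=-2i(s+n)\frac{d}{dt}P_{n+s-N-1}$ coming from $(A_{N-s-n}^-)$ to replace the $\frac{d}{dt}P_{n+s-N-1}$ term inside $(B_{N-s-n}^-)$, so that the polynomial contribution becomes a single second-order operator acting on $P_{n+s-N}$ alone (evaluated via \eqref{Slmu-identity-concrete}); this makes the emergence of the $M(n+1,k,d)$ coefficients more transparent, but your route reaches the same conclusion.
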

\begin{rem} The constants $c_{N-s}^-, c_{N-s-2}^-, \ldots, c_{N-s-2\left[\frac{n}{2}\right]}^-$ can take any value as soon as they satisfy \eqref{constants-cj-relation}. For example, if we take $c_{N-s}^- = c_{N-s-2}^- = \cdots = c_{N-s-2\left[\frac{n}{2}\right]}^- = 0$, we have that \eqref{constants-cj-relation} is trivially satisfied. As we pointed out before, at the end we will have that these constants must vanish, but this will be showed in Phase 3.
\end{rem}
\begin{proof}
By definition (see \eqref{gamma-def}), $\gamma(\lambda+N-1, a-2N-s) = 0$ amounts to
\begin{equation*}
a-s \in 2\N,\text{ and } \lambda+\frac{a-s-2}{2}=0.
\end{equation*}
Then, in particular, we have $\gamma(\lambda, a-r-2) \neq 0$ for any $r = 1, 2, \ldots, N$, $r\neq s$, which implies $\gamma(\lambda,a+j-N-3) \neq 0$ for $j = N-s+2, \dots, N-1$. By \eqref{expression_fN-and-f-N} and by using Lemma \ref{lemma-recurrence-fj}(2) recursively, we have that $f_{s-1-N}, \ldots, f_{-N}$ solve $(A_{N}^-), \ldots, (A_{N-s+1}^-)$ and $(B_N^-), \ldots, (B_{N-s+1}^-)$ if and only if $f_{-j}$ is given by \eqref{expression-fj--Pj-} for any $j = N-s+1, \ldots, N$ (since $P_{-j} \equiv 0$ in this case).

Hence, it suffices to prove the statement for $j = N-s-n$ when $n= 0, 1, \ldots, N-s$. We prove this by induction on $n$. Concretely, we prove that a solution of the system must satisfy \eqref{expression-fj--Pj-}, and later, we prove that if $f_{-j}$ for $j=N-s-n$ ($n = 0, 1, \ldots, N-s$) is given by \eqref{expression-fj--Pj-}, then, $f_{-j}$ provide a solution of the system if and only if \eqref{constants-cj-relation} is satisfied. 

Suppose $f_{s-N}, \ldots, f_0$ form part of a solution of the system (i.e., that they solve the corresponding $(A_\bullet^-)$, $(B_\bullet^-)$), and suppose $n=0$. Then, as proved in Lemma \ref{lemma-recurrence-fj}(2), by using, $f_{s-N-1}$ and $f_{s-N-2}$ we have that $(B_{N-s+1}^-)$ amounts to
\begin{equation*}
i^{s-1}\frac{d}{dt}f_{s-N} = q_N^-\frac{\Gamma\left(\lambda+N-1 + \left[\frac{a+s-2N+1}{2}\right]\right)}{\Gamma\left(\lambda+N-1 + \left[\frac{a-2N+1}{2}\right]\right)}\gamma(\lambda+N-1, a+s-2N)\Geg_{a+s-2N-1}^{\lambda+N}(it).
\end{equation*} 
Then, as before, we can integrate the expression above by \eqref{derivative-Gegenbauer-1}, which amounts to
\begin{equation*}
i^{s}f_{s-N}(t) = q_N^-\frac{\Gamma\left(\lambda+N-1 + \left[\frac{a+s-2N+1}{2}\right]\right)}{\Gamma\left(\lambda+N-1 + \left[\frac{a-2N+1}{2}\right]\right)}\Geg_{a+s-2N}^{\lambda+N-1}(it) + c_{N-s}^-,
\end{equation*}
for some constant $c_{N-s}^- \in \C$. Thus, for $n=0$, \eqref{expression-fj--Pj-} holds. Next, suppose that $n=1$. By the same argument, by using the expressions of $f_{s-N-1}$ and $f_{s-N}$, we have that $(B_{N-s}^-)$ amounts to
\begin{equation*}
\begin{aligned}
i^{s}\frac{d}{dt}f_{s-N+1} = \; &q_N^-\frac{\Gamma\left(\lambda+N-1 + \left[\frac{a+s-2N+2}{2}\right]\right)}{\Gamma\left(\lambda+N-1 + \left[\frac{a-2N+1}{2}\right]\right)}\gamma(\lambda+N-1, a+s-2N+1)\Geg_{a+s-2N}^{\lambda+N}(it) \\[4pt]
&+ \frac{(N-j)(N+j+a)}{N+j}c_j^-,
\end{aligned}
\end{equation*}
where the second term comes from a direct computation of $2(N(\lambda+a-1) +j(\lambda-1))c_j^-$ by using \eqref{Gamma-zero--condition}. Again, we integrate the expression above by using \eqref{derivative-Gegenbauer-1} and obtain
\begin{equation*}
\begin{aligned}
i^{s+1}f_{s-N+1}(t) = \; &q_N^-\frac{\Gamma\left(\lambda+N-1 + \left[\frac{a+s-2N+2}{2}\right]\right)}{\Gamma\left(\lambda+N-1 + \left[\frac{a-2N+1}{2}\right]\right)}\Geg_{a+s-2N+1}^{\lambda+N-1}(it) \\[4pt]
&+ \frac{(N-j)(N+j+a)}{N+j}c_j^-it,
\end{aligned}
\end{equation*}
Note that this expression coincides with \eqref{expression-fj--Pj-} since $M(1,0,0) = \frac{(N-j)(N+j+a)}{N+j}$. Thus \eqref{expression-fj--Pj-} also holds for $n=1$.

Now, suppose that \eqref{expression-fj--Pj-} holds for $n$ and $n+1$, (i.e., for $f_{s+n-N}$ and $f_{s+n-N-1}$), for some $n = 0, \ldots, N-s-1$. Let us show that it also holds for $n-1$ (i.e., for $f_{s+n-N+1}$). For $j = N-s-n, \ldots, N$ we write $f_{-j}$ as
\begin{equation*}
i^{N-j}f_{-j}(t) = G_{-j}(t) + P_{-j}(t),
\end{equation*}
where $G_{-j}(t)$ represents the \lq\lq Gegenbauer part\rq\rq, i.e., the first term in \eqref{expression-fj--Pj-} corresponding to the Gegenbauer polynomial. By using this notation, we have that $(B_{N-s-n}^-)$ multiplied by $(-i)^{s+n}$ can be written as:
\begin{equation}\label{proof-equation-B-}
\begin{aligned}
i^{s+n}(2N-s-n)\frac{d}{dt}f_{s+n-N+1} = &\;
2(N(\lambda+ a-1) + (N-s-n)(\lambda-1+\vartheta_t))G_{n+s-N} \\
& - i(s+n)\frac{d}{dt}G_{n+s-N-1}\\
&+ 2(N(\lambda+ a-1)+ (N-s-n)(\lambda-1+\vartheta_t))P_{n+s-N} \\
&- i(s+n)\frac{d}{dt}P_{n+s-N-1}.
\end{aligned}
\end{equation}
By following the same procedure used in Lemma \ref{lemma-recurrence-fj}(2), the first two terms can be simplified to
\begin{multline}\label{proof-G-part}
2(2N-s-n)q_N^-\frac{\Gamma\left(\lambda+N-1 + \left[\frac{a+s+n-2N+2}{2}\right]\right)}{\Gamma\left(\lambda+N-1 + \left[\frac{a-2N+1}{2}\right]\right)}\\
\times\gamma(\lambda+N-1, a+s+n-2N+1)\Geg_{a+s+n-2N}^{\lambda+N}(it).
\end{multline}
On the other hand, by $(A_{N-s-n}^-)$, we have
\begin{equation*}
S_{a+s+n-2N}^{\lambda+N-s-n-1}(G_{n+s-N} + P_{n+s-N}) = -2i(s+n)\frac{d}{dt}(G_{n+s-N-1} + P_{n+s-N-1}). 
\end{equation*}
The $G_\bullet$ parts can be proved to cancel out by using the same arguments in the proof of Lemma \ref{lemma-recurrence-fj} (2). Thus, we have
\begin{equation}\label{proof-eq-A-P-part}
S_{a+s+n-2N}^{\lambda+N-s-n-1}P_{n+s-N} = -2i(s+n)\frac{d}{dt}P_{n+s-N-1}. 
\end{equation}
By using this identity we obtain that the last two terms of \eqref{proof-equation-B-} amounts to
\begin{equation*}
\left(2(N(\lambda+ a-1)+ (N-s-n)(\lambda-1+\vartheta_t))+ \frac{1}{2}S_{a+s+n-2N}^{\lambda+N-s-n-1}\right)P_{n+s-N}. 
\end{equation*}
By using \eqref{Gamma-zero--condition}, \eqref{Slmu-identity-concrete} and the expression of $P_{n+s-N}$, the identity above amounts to
\begin{equation*}
\begin{aligned}
\sum_{k=0}^{\left[\frac{n}{2}\right]}c_{N-s-2k}^-\Big(&\sum_{d=k}^{{\left[\frac{n}{2}\right]}}M(n,k,d)(s+n-d)(2N-s+a-2n+2d)(it)^{n-2d}\\
&+\sum_{d=k}^{{\left[\frac{n}{2}\right]}}\frac{1}{2}M(n,k,d)(n-2d)(n-2d-1)(it)^{n-2d-2}\Big),
\end{aligned}
\end{equation*}
which after elementary computations, can be rewritten as
\begin{equation}\label{proof-P-part}
\begin{aligned}
\sum_{k=0}^{\left[\frac{n+1}{2}\right]}c_{N-s-2k}^-\Big(\sum_{d=k}^{{\left[\frac{n+1}{2}\right]}}M(n+1,k,d)(2N-s-n)(n+1-2d)(it)^{n-2d}\Big).
\end{aligned}
\end{equation}
Now, by using \eqref{proof-G-part}, \eqref{proof-P-part}  and \eqref{derivative-Gegenbauer-1}, we integrate \eqref{proof-equation-B-} and obtain
\begin{equation*}
\begin{aligned}
i^{s+n+1}f_{s+n-N+1} = &\; 
q_N^-\frac{\Gamma\left(\lambda+N-1 + \left[\frac{a+s+n-2N+2}{2}\right]\right)}{\Gamma\left(\lambda+N-1 + \left[\frac{a-2N+1}{2}\right]\right)}\Geg_{a+s+n+1-2N}^{\lambda+N-1}(it)\\
& +\sum_{k=0}^{\left[\frac{n+1}{2}\right]}c_{N-s-2k}^-\Big(\sum_{d=k}^{{\left[\frac{n+1}{2}\right]}}M(n+1,k,d)(it)^{n-2d+1}\Big).
\end{aligned}
\end{equation*}
Hence, $f_{s+n-N+1}$ is also given by \eqref{expression-fj--Pj-}.\\

Now, suppose that $f_{-j}$ is given by \eqref{expression-fj--Pj-} for all $j=0, \ldots, N$. Let us show that $f_{-j}$ provide a solution of the system if and only if the relations \eqref{constants-cj-relation} hold. For $j = N-s+1, \ldots, N$ there is nothing to prove, since $P_{-j} \equiv 0$. Suppose then $j = 0, \ldots, N-s$. Since we obtained $f_{-j}$ by solving $(B_{j+1}^-)$ recursively, it suffices to show that $f_{-j}$ satisfies $(A_{j}^-)$. Fix $j = N-s-n$ with $n = 0, 1, \ldots, N$. As pointed out above, $(A_{N-s-n}^-)$ is satisfied if and only if \eqref{proof-eq-A-P-part} holds. By using \eqref{Slmu-identity-concrete}, and the definition of $P_{N-s-n}$, the left-hand side amounts to
\begin{equation*}
\begin{aligned}
\sum_{k=0}^{\left[\frac{n}{2}\right]}c_{N-s-2k}^-\Bigg(&
2kM(n,k,k)(2N-s-a-2k)(it)^{n-2k} + \\
&\sum_{d=k+1}^{\left[\frac{n}{2}\right]}\Big(2dM(n,k,d)(2N-s-a-2d) \\
&+ M(n,k,d-1)(n-2d+2)(n-2d+1)\Big)(it)^{n-2d}\Bigg),
\end{aligned}
\end{equation*}
while the right-hand side is
\begin{equation*}
\sum_{k=0}^{\left[\frac{n-1}{2}\right]}c_{N-s-2k}^-\Big(\sum_{d=k}^{\left[\frac{n-1}{2}\right]}
2M(n-1,k,d)(n+s)(n-2d-1)(it)^{n-2d-2}\Big).
\end{equation*}
Both sides are polynomials of degree $n-2$, so they clearly coincide if and only if each term $t^{n-2\ell-2}$ (for $\ell = 0, 1, \ldots, \left[\frac{n-2}{2}\right]$) coincide. By a straightforward computation, this is easily shown to by equivalent to \eqref{constants-cj-relation}. Hence, the lemma is proved. 
\end{proof}

Now we consider the case $N < a < 2N$. This case can be done by using the same machinery we developed in the case $a \geq 2N$. First, note that Lemma \ref{expression-fj-}(1) and Remark \ref{rem-normalization-fj+-}(1) still holds in this case, so there are no changes in the $+$ side. Thus, it suffices to think about the $-$ side. In this case, instead of distinguishing between $\Gamma_0^- \neq 0$ and $\Gamma_0^- = 0$, we consider the following two cases:
\begin{itemize}
\item[(a)] $\gamma(\lambda, a-s-2) \neq 0$ for all $s=2N-a, \ldots, N$.
\item[(b)] $\gamma(\lambda, a-s-2) = 0$ for some $s=2N-a, \ldots, N$.
\end{itemize}
Since $a-N-j<0$ for $j = a-N+1, a-N+2, \ldots, N$, we have that $f_{-j} \equiv 0$ for $j = a-N+1, a-N+2, \ldots, N$ by the condition $f_{-j}\in\Pol_{a-N-j}[t]_\text{even}$. Hence, by \eqref{expression_fN-and-f-N} and by using Lemma \ref{lemma-recurrence-fj}(2) recursively for $j =1, 2, \ldots, a-N+1$ we can obtain $f_{N-a}, \ldots, f_0$ as soon as $\gamma(\lambda, a+j-N-3) \neq 0$ for all $j = 1, 2, \ldots, a-N+1$, which is precisely the condition (a). Hence, in this case we have that $f_{-j}$ is given by \eqref{expression-fj--normalized} for all $j = 0, 1, \ldots, N$.

On the other hand, (b) is just the condition \eqref{Gamma-zero--condition} for $s = 2N-a, 2N-a+1, \ldots, N$. Thus, in this case, by Lemma \ref{lemma-Gamma-zero} we obtain that $f_{-j}$ is given by \eqref{expression-fj--Pj-} for all $j = 0, 1, \ldots, N$.

\subsection{Phase 3: Proving $f_{-0} = f_{+0}$}\label{section-phase3}
Now that we have completed Phase 2, i.e., that we have obtained $f_{-N}, \ldots, f_0$ in the $-$ side, and $f_0, \ldots, f_N$ in the $+$ side, the only task that remains is to check that the expressions obtained for $f_0$ in both sides (that we can denote by $f_{-0}$ and $f_{+0}$ respectively) coincide.

As in Section \ref{section-phase1}, we assume $a\geq 2N$ and consider the case $N < a < 2N$ at the end in Remark \ref{rem-Phase3-N<=a<2N}.

Suppose $\Gamma^-_0 \neq 0$. Then, by Lemma \ref{lemma-expressions-fj} and Remarks \ref{rem-normalization-fj+-} and \ref{rem-Gamma+zero}, a solution of the system $f_{\pm j}$ is given by \eqref{expression-fj+-normalized} and \eqref{expression-fj--normalized} respectively. Thus, $f_{-0} = f_{+0}$ amounts to
\begin{equation}\label{equality-fpm0}
\frac{\Gamma\left(\lambda+N-1+\left[\frac{a-N+1}{2}\right]\right)}{\Gamma\left(\lambda+N-1+\left[\frac{a-2N+1}{2}\right]\right)}\widetilde{q_N^-}\Geg_{a-N}^{\lambda+N-1}(it) = \widetilde{q_N^+}\Geg_{a-N}^{\lambda+N-1}(it).
\end{equation}
Hence, we obtain
\begin{lemma}\label{lemma-phase3-generic}
Suppose $\Gamma_0^- \neq 0$ and suppose that $f_{\pm j}$ are given by \eqref{expression-fj+-normalized} and \eqref{expression-fj--normalized} respectively. Then, \eqref{equality-fpm0} holds if and only if the tuple $(\widetilde{q_N^-}, \widetilde{q_N^+})$ satisfies the following relation.
\begin{equation}\label{relation-q_N-pm}
\frac{\Gamma\left(\lambda+N-1+\left[\frac{a-N+1}{2}\right]\right)}{\Gamma\left(\lambda+N-1+\left[\frac{a-2N+1}{2}\right]\right)}\widetilde{q_N^-} = \widetilde{q_N^+}.
\end{equation}
\end{lemma}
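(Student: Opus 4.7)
The plan is very direct because the lemma essentially asks when two polynomial expressions that both happen to be scalar multiples of the \emph{same} renormalized Gegenbauer polynomial agree. First I would simply substitute $j=0$ into the normalized formulas \eqref{expression-fj+-normalized} and \eqref{expression-fj--normalized} to extract $f_{+0}$ and $f_{-0}$ explicitly, observing that both are of the form $(\text{constant})\cdot \Geg_{a-N}^{\lambda+N-1}(it)$; writing down these two expressions side by side makes the equivalence between $f_{-0}=f_{+0}$ and \eqref{relation-q_N-pm} a one-line check, modulo the nonvanishing of the common Gegenbauer factor.

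The only nontrivial point is therefore to argue that $\Geg_{a-N}^{\lambda+N-1}(it)$ is not the zero polynomial, so that we may divide both sides of \eqref{equality-fpm0} by it. Here is where the hypothesis $\Gamma_0^-\neq 0$ comes in: by \eqref{gamma-equivalence} and the definition of $\Gamma_0^-$, the assumption is equivalent to $\gamma(\lambda,a-r-2)\neq 0$ for all $r=1,\dots,N$. Using the factorization identities recalled in the appendix for the renormalized Gegenbauer polynomial, these nonvanishing $\gamma$-factors are precisely what guarantee that $\Geg_{a-N}^{\lambda+N-1}$ does not collapse to $0$ (the only way a renormalized Gegenbauer polynomial of degree $a-N$ vanishes identically is through a cancellation in the $\gamma$-normalization, and the relevant $\gamma$-factors are exactly those controlled by $\Gamma_0^-$).

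Once the nonvanishing of $\Geg_{a-N}^{\lambda+N-1}(it)$ is established, the proof reduces to the trivial observation that if $\alpha\, p(t)=\beta\, p(t)$ with $p\not\equiv 0$, then $\alpha=\beta$; applied to $\alpha=\tfrac{\Gamma(\lambda+N-1+[(a-N+1)/2])}{\Gamma(\lambda+N-1+[(a-2N+1)/2])}\widetilde{q_N^-}$ and $\beta=\widetilde{q_N^+}$, this is exactly \eqref{relation-q_N-pm}. The main (and really only) obstacle is the nonvanishing argument; everything else is a substitution. Since no calculation beyond the one-line identification of the coefficients is required, I would keep the write-up very short, just noting the substitution, stating the nonvanishing as a consequence of $\Gamma_0^-\neq 0$, and concluding.
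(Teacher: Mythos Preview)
Your overall approach matches the paper's (which treats the lemma as an immediate consequence of \eqref{equality-fpm0} and gives no separate proof): substitute $j=0$, observe both sides are scalar multiples of the same polynomial, and equate the scalars.

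However, your justification for the nonvanishing of $\Geg_{a-N}^{\lambda+N-1}(it)$ is mistaken. The renormalized Gegenbauer polynomial $\Geg_\ell^\mu$ is \emph{never} identically zero for $\ell\geq 0$; this is precisely the point of the renormalization \eqref{Gegenbauer-polynomial(renormalized)}. As the appendix records, the term of degree $\ell-2[\ell/2]\in\{0,1\}$ equals $\frac{(-1)^{[\ell/2]}}{[\ell/2]!}(2z)^{\ell-2[\ell/2]}$, which is nonzero for every $\mu\in\C$. So there is no ``cancellation in the $\gamma$-normalization'' to worry about, and the hypothesis $\Gamma_0^-\neq 0$ plays no role in this step. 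That hypothesis is purely contextual: it is what guarantees (via Lemma~\ref{lemma-expressions-fj}(2) and Remark~\ref{rem-normalization-fj+-}(2)) that the $f_{-j}$ are actually given by \eqref{expression-fj--normalized} in the first place. Once the formulas are assumed, the equivalence of \eqref{equality-fpm0} and \eqref{relation-q_N-pm} is immediate from the nonvanishing of $\Geg_{a-N}^{\lambda+N-1}$, which holds unconditionally (in the ambient regime $a\geq N$ where the lemma is applied, so that the degree $a-N$ is nonnegative).
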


Now, suppose that $\Gamma_0^- = 0$, then by Lemma \ref{lemma-expressions-fj}(1), Remark \ref{rem-normalization-fj+-}(1) and Lemma \ref{lemma-Gamma-zero}, a solution of the system $f_{\pm j}$ is given by \eqref{expression-fj+-normalized} and \eqref{expression-fj--Pj-} respectively. Thus, $f_{-0} = f_{+0}$ amounts to
\begin{equation}\label{equality-fpm0-Gamma-zero}
\widetilde{q_N^-} \frac{\Gamma\left(\lambda+N-1+\left[\frac{a-N+1}{2}\right]\right)}{\Gamma\left(\lambda+N-1+\left[\frac{a-2N+1}{2}\right]\right)}\Geg_{a-N}^{\lambda +N-1}(it) + (-i)^{N} P_{0}(t) = \widetilde{q_N^+}\Geg_{a-N}^{\lambda+N-1}(it).
\end{equation}
Now, we have

\begin{lemma}\label{lemma-phase3-Gamma-zero}
Suppose $\Gamma_0^- = 0$, or equivalently, that \eqref{Gamma-zero--condition} holds, and suppose further that $f_{\pm j}$ is given by \eqref{expression-fj+-normalized} and \eqref{expression-fj--Pj-} respectively. Then, \eqref{equality-fpm0-Gamma-zero} holds if and only if the two following conditions are satisfied:
\begin{itemize}
\item  The constants $c_{N-s}^-, c_{N-s-2}^-, \ldots, c_{N-s-2\left[\frac{N-s}{2}\right]}^-$ appearing in $P_0$ all vanish. In particular, $P_{-j} = 0$ for all $j=0, \ldots, N$.
\item The tuple $(\widetilde{q_N^-}, \widetilde{q_N^+})$ satisfies \eqref{relation-q_N-pm}.
\end{itemize}
\end{lemma}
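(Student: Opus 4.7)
The plan is to reformulate \eqref{equality-fpm0-Gamma-zero} as
$$(-i)^N P_0(t) = D \cdot \Geg_{a-N}^{\lambda+N-1}(it),$$
where $D := \widetilde{q_N^+} - \widetilde{q_N^-}\frac{\Gamma\left(\lambda+N-1+\left[\frac{a-N+1}{2}\right]\right)}{\Gamma\left(\lambda+N-1+\left[\frac{a-2N+1}{2}\right]\right)}$ is a scalar independent of $t$. The sufficiency direction is then immediate: if every constant $c^-_\bullet$ in $P_0$ vanishes and \eqref{relation-q_N-pm} holds, both sides vanish identically. For necessity I would exploit that $P_0$ has low degree while $\Geg_{a-N}^{\lambda+N-1}(it)$ has full degree $a-N$: indeed, the hypothesis $\Gamma_0^- = 0$ forces $\gamma(\lambda+N-1, a-2N-s) = 0$, which requires $a - 2N - s \in 2\N$, so in particular $a \geq 2N + s$, yielding $a - N \geq N + s > N - s \geq \deg P_0$. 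A direct computation from the explicit formula of the renormalized Gegenbauer polynomial (see the appendix) shows that in this degenerate parameter regime the leading coefficient of $\Geg_{a-N}^{\lambda+N-1}(it)$ is proportional to a ratio of Gamma values with positive arguments (for instance $\Gamma((a+s)/2)/\Gamma((N+s)/2)$ when $N-s$ is even), hence nonzero. Therefore the only way the reformulated equation can hold is $D = 0$ and $P_0 \equiv 0$ separately, and $D = 0$ is precisely the relation \eqref{relation-q_N-pm}.

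Next I would deduce from $P_0 \equiv 0$ that every constant $c^-_{N-s-2k}$, $0 \leq k \leq \left[\frac{N-s}{2}\right]$, vanishes. Expanding
$$P_0(t) = \sum_{d=0}^{\left[\frac{N-s}{2}\right]}\left(\sum_{k=0}^{d}c_{N-s-2k}^- M(N-s, k, d)\right)(it)^{N-s-2d},$$
the identity $P_0 \equiv 0$ becomes a triangular linear system in the unknowns $c_{N-s}^-, c_{N-s-2}^-, \ldots$, whose diagonal coefficient at level $d$ is $M(N-s, d, d)$. A direct check using the explicit formula for $M$ combined with $a \geq 2N + s$ shows these diagonal coefficients are nonzero for every $d$; inductively one obtains $c_{N-s}^- = 0$, then $c_{N-s-2}^- = 0$, and so on. Hence all constants $c^-_{N-s-2k}$ vanish, and consequently $P_{-j} \equiv 0$ for every $j = 0, \ldots, N$ since the very same constants appear in each $P_{-j}$.

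The main obstacle will be verifying the nonvanishing statements used above, namely that both the leading coefficient of $\Geg_{a-N}^{\lambda+N-1}(it)$ and the diagonal entries $M(N-s, d, d)$ remain nonzero under the degenerate condition $\lambda + N - 1 + (a-2N-s)/2 = 0$; this amounts to checking that certain Gamma arguments and Pochhammer factors stay in their nonsingular range, a computation that is purely arithmetic and guided by the inequality $a \geq 2N + s$.
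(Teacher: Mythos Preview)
Your approach is the same as the paper's: compare degrees to split \eqref{equality-fpm0-Gamma-zero} into $D=0$ and $P_0\equiv 0$, then peel off the constants $c^-_{N-s-2k}$ using the triangular system with nonzero diagonal $M(N-s,d,d)$. One correction: the condition $\gamma(\lambda+N-1,a-2N-s)=0$ only says that $a-2N-s$ is an \emph{even integer} (together with $\lambda+N-1+(a-2N-s)/2=0$), not that $a-2N-s\in 2\N$; so you cannot conclude $a\geq 2N+s$. The standing hypothesis in this part of the paper is $a\geq 2N$, and that already gives $a-N\geq N>N-s\geq\deg P_0$, which is all you need for the degree comparison. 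Your leading-coefficient computation (e.g.\ $\Gamma((a+s)/2)/\Gamma((N+s)/2)$ when $N-s$ is even) is correct and in fact makes explicit a nonvanishing that the paper simply asserts by writing $\deg\Geg_{a-N}^{\lambda+N-1}=a-N$; note that this computation still goes through with $a\geq 2N$ rather than $a\geq 2N+s$, since you only use $(a+s)/2>0$ and $(N+s)/2>0$.
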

\begin{proof}
If the conditions of the lemma are satisfied, it is clear that \eqref{equality-fpm0-Gamma-zero} is satisfied. Thus, let us show the opposite direction.

If $a \geq 2N$, then $\deg(\Geg_{a-N}^{\lambda+N-1}) = a-N > \deg(P_0) = N-s$. Thus, the top terms of both sides of \eqref{equality-fpm0-Gamma-zero} coincide if and only if  $(\widetilde{q_N^-}, \widetilde{q_N^+})$ satisfies \eqref{relation-q_N-pm}. Then, from \eqref{equality-fpm0-Gamma-zero} we have that
\begin{equation*}
P_0(t) = \sum_{k=0}^{\left[\frac{N-s}{2}\right]}C_{N-s-2k}^-\Big(\sum_{d = 0}^{\left[\frac{N-s}{2}\right]}M(N-s, k, d) (it)^{N-s-2d}\Big) = 0.
\end{equation*}
In particular, all terms of $P_0$ must vanish.
By the expression above it is clear that the term $t^{N-s}$ amounts to
\begin{equation*}
C_{N-s}^-M(N-s, 0, 0)(it)^{N-s}.
\end{equation*}
Thus, it vanishes if and only if $c_{N-s}^- = 0$ as $M(N-s,0,0) \neq 0$.
Similarly, the term $t^{N-s-2}$ amounts to
\begin{equation*}
\big(C_{N-s}^-M(N-s,0,1) + C_{N-s-2}^-M(N-s,1,1)\big)(it)^{N-s-2}.
\end{equation*}
Therefore, since $c_{N-s}^- = 0$, it vanishes if and only if $c_{N-s-2}^- = 0$ as $M(N-s,1,1) \neq 0$. By repeating this process, we will have that for some $\ell = 0, 1, \ldots, \left[\frac{\ell}{2}\right]$, the terms $t^{N-s}, t^{N-s-2} \ldots, t^{N-s-2\ell}$ vanish if and only if $c_{N-s}^- =c_{N-s-2}^- = \cdots = c_{N-s-2\ell}^- = 0$. Then, the term $t^{N-s-2\ell-2}$ amounts to
\begin{equation*}
\sum_{k=0}^{\ell+1}c_{N-s-k}^-M(N-s,k,\ell+1)(it)^{N-s-2\ell-2} = c_{N-s-2\ell -2}^-M(N-s,\ell+1, \ell+1)(it)^{N-s-2\ell-2},
\end{equation*}
which vanishes if and only if $c_{N-s-2\ell-2}^- = 0$ as $M(N-s, d, d) \neq 0$ for any $d$. Hence, by finite induction on $\ell$ we arrive to the conclusion that $P_0 \equiv 0$ implies $c_{N-s}^- = c_{N-s-2}^- =  \cdots =  c_{N-s-2\left[\frac{N-s}{2}\right]}^- = 0$. 
\end{proof}

\begin{rem}\label{rem-Phase3-N<=a<2N}
We note that if $N < a < 2N$ the results of this section remain true. As we pointed out at the end of Section \ref{section-phase1}, instead of distinguish between $\Gamma_0^- \neq 0$ and $\Gamma_0^- = 0$, we should consider the cases:
\begin{itemize}
\item[(a)] $\gamma(\lambda, a-s-2) \neq 0$ for all $s=2N-a, \ldots, N$.
\item[(b)] $\gamma(\lambda, a-s-2) = 0$ for some $s=2N-a, \ldots, N$.
\end{itemize}
The first case is covered by Lemma \ref{lemma-phase3-generic}, so we obtain the same result. On the other hand, case (b) can be proved by following the same argument in the proof of Lemma \eqref{lemma-phase3-generic}. This can be done since as $s = 2N-a, \ldots, N$, we have $N-s < a-N$. So in this case we obtain the same result as Lemma \ref{lemma-phase3-Gamma-zero}.
\end{rem}

\subsection{Proof of Theorem \ref{Thm-solvingequations}}
\label{section-proof_of_solving_equations-subsection}

Now, we are ready to show Theorem \ref{Thm-solvingequations}.
\begin{proof}[Proof of Theorem \ref{Thm-solvingequations}]
Take polynomials
\begin{equation}\label{polynomial-even-order}
f_{\pm j} \in \Pol_{a-N\pm j}[t]_\text{{\normalfont{even}}}, \enspace \text{ for } j=0, 1, \ldots, N.
\end{equation} 
We show that if $(f_{-N}, \ldots, f_0, \ldots, f_N)$ is a solution of the system $\Xi(\lambda, a, N, N)$, then
\begin{equation}\label{proof-tuple-f-g}
(f_{-N}, \ldots, f_0, \ldots, f_N) = \alpha (g_{2N}, \ldots, g_N, \ldots, g_{0}),
\end{equation}
for some $\alpha \in \C$, where $(g_k)_{k=0}^{2N}$ is the tuple given in (\ref{solution-all}).

We divide the proof into the following two cases:
\begin{enumerate}[leftmargin=1cm, topsep=0pt]
\item[(1)] $0 \leq a \leq N$.
\item[(2)] $a > N$.
\end{enumerate}

We consider (2) first. By the three-phase strategy explained in Sections \ref{section-phase1} and \ref{section-phase3}, we have that $(f_{-N}, \ldots, f_N)$ is a solution of the system if and only if $f_{\pm j}$ is given by \eqref{expression-fj+-normalized} and \eqref{expression-fj--normalized} respectively for all $j = 0, \ldots, N$, where the constants $\widetilde{q_N^\pm}$ satisfy \eqref{relation-q_N-pm}. In other words, $f_{\pm j}$ provides a solution of the system if and only if
\begin{equation*}
f_{\pm j}(t) = i^{\mp j} \frac{\Gamma\left(\lambda+N-1+\left[\frac{a\pm j-N+1}{2}\right]\right)}{\Gamma\left(\lambda+N-1+\left[\frac{a-2N+1}{2}\right]\right)}\widetilde{q_N^-}\Geg_{a\pm j-N}^{\lambda+N-1}(it).
\end{equation*}
The tuple $(f_{-N}, \ldots, f_N)$ clearly satisfies
\eqref{proof-tuple-f-g} for $\alpha = (-1)^N\widetilde{q_N^-}$.

(1) Suppose now that $0 \leq a \leq N$. Since $a-j-N < 0$ for all $j = 1, 2, \ldots, N$, we have that $f_{-j} \equiv 0$ for all $j = 1, 2, \ldots, N$ from \eqref{polynomial-even-order}. Hence, all the functions in the $-$ side vanish, so it suffices to determine $f_j$ (for all $j=0, 1, \ldots, N$). In particular, there is no Phase 3 in this case. Moreover, also by \eqref{polynomial-even-order}, we deduce that $f_j \equiv 0$ for $j = 0, \ldots, N-a-1$, so the non-vanishing functions are $f_{N-a}, f_{N-a+1}, \ldots, f_N$.

The arguments in the proofs of Lemmas \ref{lemma-expressions-fj} and \ref{lemma-Gamma+zero} also work in this case. However, instead of renormalizing $q_N^+$ as in Remark \ref{rem-normalization-fj+-}, we do it as follows:
\begin{equation}\label{renormalization-0<=a<=N}
 (-i)^N\frac{\Gamma\left(\lambda+N-1+\left[\frac{a+1}{2}\right]\right)}{\Gamma\left(\lambda+N-1\right)} \widetilde{q_N^+} = q_N^+. 
\end{equation}
We do this change in the normalization in order to avoid that the constructed solution $f_j$ vanishes if $\lambda$ takes some particular values as we explain below.\\
When $a > N$, the normalization is done such that the gamma factor in the right-hand side of \eqref{expression-fj+-normalized} is 1 when $j = 0$, $\gamma(\lambda+N-1, a-N)$ when $j = 1$, $\gamma(\lambda +N-1, a-N)\gamma(\lambda+N-1, a-N+1)$ when $j=2$, and so on. If we use the same normalization when $0 \leq a \leq N$, the gamma factor of the first (starting from $f_0$) non-vanishing function $f_{N-a}$ would be $\gamma(\lambda+N-1, a-N)\gamma(\lambda+N-1, a-N+1) \cdots \gamma(\lambda+N-1, -1)$, which can vanish for some values of $\lambda$. We renormalize the constant $q_N^+$ so that this gamma factor is 1. This normalization is precisely \eqref{renormalization-0<=a<=N}, which leads to
\begin{equation*}
f_{j}(t) = (-i)^{j} \frac{\Gamma\left(\lambda+N-1+\left[\frac{a+ j-N+1}{2}\right]\right)}{\Gamma\left(\lambda+N-1\right)}\widetilde{q_N^+}\Geg_{a+j-N}^{\lambda+N-1}(it),
\end{equation*}
for all $j = 0, 1, \ldots, N$. Hence, we clearly have \eqref{proof-tuple-f-g}, for $\alpha = (-1)^N\widetilde{q_N^+}$.

\end{proof}
\section{Duality Between $m$ and $-m$}\label{section-case_m_lessthan_-N}

In this section, we establish a duality between the F-systems for $m \geq N$ and for $m \leq -N$ (Proposition \ref{prop-duality_m_and_-m}). This result, combined with Theorem \ref{Thm-step2}, allows us to show Theorems \ref{mainthm1} and \ref{mainthm2} for $m = -N$. We begin by introducing some notation.

Let $m \in \Z_{\leq -N}$ and set $p = -m \in \N_{\geq N}$. Given $a\in \N$, we define a map
\begin{equation*}
\psi^\pm: \bigoplus_{k = p-N}^{p+N} \Pol_{a-k}[t]_\text{{\normalfont{even}}} \rightarrow \Hom_{SO(2)}\left(V^{2N+1}, \C_{\pm p} \otimes \Pol^a(\n_+)\right)
\end{equation*}
by
\begin{equation*}
\psi^\pm((g_k)_{k=p-N}^{p+N})(\zeta) = \sum_{k = p-N}^{p+N}\left(T_{a-k}g_k\right)(\zeta)h_k^\pm.
\end{equation*}
Recall that $h_k^\pm$ are the generators defined in \eqref{generators-Hom(V2N+1,CmHk)}. In coordinates we have
\begin{equation*}
\begin{gathered}
\psi^+((g_k)_{k=p-N}^{p+N})(\zeta) =
\begin{pmatrix}
\displaystyle{(T_{a-p+N}g_{p-N})(\zeta) (\zeta_1 +i\zeta_2)^{p-N}}\\[4pt]
\vdots\\[4pt]
\displaystyle{(T_{a-p-N}g_{p+N})(\zeta) (\zeta_1 + i\zeta_2)^{p+N}}
\end{pmatrix},\\[4pt]
\psi^-((g_k)_{k=p-N}^{p+N})(\zeta) =
\begin{pmatrix}
\displaystyle{(T_{a-p-N}g_{p+N})(\zeta) (\zeta_1 -i\zeta_2)^{p+N}}\\[4pt]
\vdots\\[4pt]
\displaystyle{(T_{a-p+N}g_{p-N})(\zeta) (\zeta_1 - i\zeta_2)^{p-N}}
\end{pmatrix}.
\end{gathered}
\end{equation*}
For simplicity, for a given $(g_k)_{k=p-N}^{p+N}$ we also write $\psi^\pm(\zeta) \equiv \psi^\pm((g_k)_{k=p-N}^{p+N})(\zeta)$. 

Let $\Phi$ be the following involution on $\Hom_\C\left(V^{2N+1}, \Pol(\n_+)\right)$
\begin{equation*}
\begin{gathered}
\Phi(\varphi)(\zeta_1, \zeta_2, \zeta_3) = 
((-1)^s\varphi_{2N-s}(\zeta_1, -\zeta_2, \zeta_3))_{s=0}^{2N},
\end{gathered}
\end{equation*}
for $\varphi(\zeta) = (\varphi_s(\zeta))_{s=0}^{2N} \in \Hom_\C\left(V^{2N+1}, \Pol(\n_+)\right)$.
Note that for $\varphi = \psi^+(\zeta) \equiv \psi^+((g_k)_{k=p-N}^{p+N})(\zeta)$, we have
\begin{equation}\label{relation-Phi(psi+)-and-psi-}
\Phi(\psi^+)(\zeta_1, \zeta_2, \zeta_3) = \psi^-(((-1)^{k-p+N}g_k)_{k=p-N}^{p+N})(\zeta_1, \zeta_2, \zeta_3).
\end{equation}
We obtain the following
\begin{lemma}\label{lemma-duality_m_and_-m}
Let $C_1^+ \in \n_+$ defined as in \eqref{elements}. Then, we have
\begin{equation}\label{interwtining-property}
\Phi\left(\left(\widehat{d \pi_{(\sigma^{2N+1}, \lambda)^*}}(C_1^+) \otimes \id_{\C_{p,\nu}}\right)\psi^+\right) = \left(\widehat{d \pi_{(\sigma^{2N+1}, \lambda)^*}}(C_1^+) \otimes \id_{\C_{-p,\nu}}\right)\Phi(\psi^+).
\end{equation}
\end{lemma}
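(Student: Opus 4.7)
The plan is to verify the intertwining relation \eqref{interwtining-property} by decomposing $\widehat{d\pi_{(\sigma^{2N+1},\lambda)^*}}(C_1^+)$ into its scalar and vector parts as in \eqref{dpimu-decomposition}--\eqref{dpimu-scalar-vect-formulas}, and checking each piece separately. First, I would observe that since $\C_{\pm p,\nu}$ is one-dimensional, the factor $\id_{\C_{\pm p,\nu}}$ is computationally trivial: the claim reduces to showing that the single differential operator $\widehat{d\pi_\mu}(C_1^+)$ on $\Hom_\C(V^{2N+1},\Pol(\n_+))$ commutes with $\Phi$. Writing $\Phi = D\circ J \circ \tau$, where $\tau(\varphi)(\zeta)=\varphi(\zeta_1,-\zeta_2,\zeta_3)$, $J$ is the anti-diagonal permutation $s\mapsto 2N-s$ of the basis, and $D=\diag((-1)^s)$, decouples the analysis into a ``scalar'' check involving only $\tau$, and a ``matrix'' check involving $D$ and $J$.

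The scalar part is handled immediately. By Lemma \ref{lemma-coefficients-Ms-scalar-vector}, it acts as the scalar operator $2\lambda\,\partial_{\zeta_1} + 2E_\zeta\,\partial_{\zeta_1} - \zeta_1\,\Delta^\zeta_{\C^3}$ diagonally on components. Each occurrence of $\zeta_2$ is paired with a $\partial_{\zeta_2}$ (in $E_\zeta$ and $\Delta^\zeta_{\C^3}$), so the operator is invariant under $\tau$, and since it acts identically on all components it trivially commutes with $D$ and $J$. For the vector part, using the explicit matrix $A=(A_{s,s'})$ from the same lemma, I would establish the key entry-wise identity
\begin{equation*}
A_{s,\,2N-s'} = (-1)^{s+s'}\,A^{(\sigma)}_{2N-s,\,s'} \qquad (0\le s,s'\le 2N),
\end{equation*}
where $A^{(\sigma)}$ is obtained from $A$ by substituting $\partial_{\zeta_2}\mapsto -\partial_{\zeta_2}$. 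This is a direct verification on the three non-zero bands. On the diagonal, $A_{s,s}=2i(s-N)\partial_{\zeta_2}$, and the index reversal $s\mapsto 2N-s$ together with the sign flip on $\partial_{\zeta_2}$ reproduces the same factor, while $(-1)^{s+s'}=1$. On the superdiagonal $s'=s+1$ and on the subdiagonal $s'=s-1$, the index reversal $s''=2N-s$ swaps super- and subdiagonal entries, and the signs combine with the factor $(-1)^{s+s'}=-1$ to give precisely the required coefficients $(2N-s)$ and $-s$.

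Finally, I would translate this entry-wise identity into the operator identity by a short direct computation: writing $\tilde\varphi_{s'}(\zeta):=\varphi_{s'}(\tau\zeta)$ and using $(\partial_{\zeta_2}\varphi_{s'})(\tau\zeta)=-\partial_{\zeta_2}\tilde\varphi_{s'}(\zeta)$, one gets
\begin{equation*}
\Phi(A\varphi)_s(\zeta) = (-1)^s\!\!\sum_{s'} A^{(\sigma)}_{2N-s,\,s'}\tilde\varphi_{s'}(\zeta), \qquad (A\,\Phi(\varphi))_s(\zeta) = \sum_{s'}(-1)^{s'}A_{s,\,2N-s'}\tilde\varphi_{s'}(\zeta),
\end{equation*}
and the two right-hand sides agree exactly by the entry-wise identity. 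Combined with the scalar part this yields \eqref{interwtining-property}. There is no real conceptual obstacle; the only delicate point is sign-bookkeeping across the three sources of signs (the factor $(-1)^s$ from $D$, the index reversal from $J$, and the $\partial_{\zeta_2}\to-\partial_{\zeta_2}$ from $\tau$). The miracle is that the very simple form of $d\sigma^{2N+1}(2X_{2,1})$ (diagonal with antisymmetric entries $2i(s-N)$) and $d\sigma^{2N+1}(2X_{3,1})$ (the tridiagonal matrix whose off-diagonal entries satisfy $(2N-s)$ on the super-, $-s$ on the sub-) is precisely what is needed for the three signs to match.
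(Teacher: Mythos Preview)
Your proof is correct and follows essentially the same approach as the paper: both decompose $\widehat{d\pi_\mu}(C_1^+)$ into its scalar and vector parts, handle the scalar part via invariance under $\zeta_2\mapsto-\zeta_2$, and verify the vector part by direct computation with the explicit tridiagonal matrix $(A_{ss'})$. Your decomposition $\Phi=D\circ J\circ\tau$ and the clean entry-wise identity $A_{s,\,2N-s'}=(-1)^{s+s'}A^{(\sigma)}_{2N-s,\,s'}$ make the sign bookkeeping more transparent than the paper's terse ``by inspection'' argument for the vector part, but the underlying computation is identical.
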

\begin{proof}
Let $M_s(\psi^+)(\zeta)$ and $M_s(\Phi(\psi^+))(\zeta)$ denote the vector coefficients of the following operators, respectively:
\begin{equation*}
\begin{gathered}
\left(\widehat{d \pi_{(\sigma^{2N+1}, \lambda)^*}}(C_1^+) \otimes \id_{\C_{p,\nu}}\right)\psi^+, \quad \left(\widehat{d \pi_{(\sigma^{2N+1}, \lambda)^*}}(C_1^+) \otimes \id_{\C_{-p,\nu}}\right)\Phi(\psi^+),
\end{gathered}
\end{equation*}
(see Section \ref{section-vector-coefficients}). Then, in order to prove \eqref{interwtining-property} it suffices to show
\begin{equation*}
\left((-1)^sM_{2N-s}(\psi^+)(\zeta_1, -\zeta_2, \zeta_3)\right)_{s=0}^{2N} = \left(M_s(\Phi(\psi^+))(\zeta_1, \zeta_2, \zeta_3)\right)_{s=0}^{2N}
\end{equation*}
Since $M_s = M_s^\text{{\normalfont{scalar}}} + M_s^\text{{\normalfont{vect}}}$, we consider the scalar and vector parts $M_s^\text{{\normalfont{scalar}}}$ and $M_s^\text{{\normalfont{vect}}}$ of each side of the identity above separately.

1) \underline{$M_s^\text{{\normalfont{scalar}}}$}: For $\varphi(\zeta_1, \zeta_2, \zeta_3) \in \Hom_\C\left(V^{2N+1}, \Pol(\n_+)\right)$ we define $(\alpha_2\varphi)(\zeta_1, \zeta_2, \zeta_3) = \varphi(\zeta_1, -\zeta_2, \zeta_3)$.
Then, clearly $\Delta_{\C^3} \circ \alpha_2 = \alpha_2 \circ \Delta_{\C^3}$ and $E_\zeta \circ \alpha_2 =  \alpha_2 \circ E_\zeta$; which implies that
\begin{equation*}
\widehat{d\pi_{(\sigma^{2N+1}, \lambda)^*}^\text{{\normalfont{scalar}}}}(C_1^+) \circ \alpha_2 = \alpha_2 \circ \widehat{d\pi_{(\sigma^{2N+1}, \lambda)^*}^\text{{\normalfont{scalar}}}}(C_1^+).
\end{equation*}
Therefore
\begin{equation*}
\begin{gathered}
\left((-1)^sM_{2N-s}^\text{scalar}(\psi^+)(\zeta_1, -\zeta_2, \zeta_3)\right)_{s=0}^{2N} = \alpha_2\left((-1)^sM_{2N-s}^\text{scalar}(\psi^+)(\zeta_1, \zeta_2, \zeta_3)\right)_{s=0}^{2N} \\
= \left((-1)^sM_{2N-s}^\text{scalar}(\alpha_2\psi^+)(\zeta_1, \zeta_2, \zeta_3)\right)_{s=0}^{2N} = \left(M_s^\text{scalar}(\Phi(\psi^+))(\zeta_1, \zeta_2, \zeta_3)\right)_{s=0}^{2N}
\end{gathered}
\end{equation*}

2) \underline{$M_s^\text{{\normalfont{vect}}}$}:
By (\ref{Ms-vect}), the left-hand side amounts to
\begin{equation*}
\begin{aligned}
\Big((-1)^s[-(2N-s)\frac{\partial}{\partial \zeta_3}\psi^+_{2N-s-1}(\zeta_1, \zeta_2, \zeta_3) + 2i(N-s)\frac{\partial}{\partial(-\zeta_2)}\psi^+_{2N-s}(\zeta_1, \zeta_2, \zeta_3) \\
+s\frac{\partial}{\partial \zeta_3}\psi^+_{2N-s+1}(\zeta_1, \zeta_2, \zeta_3)]\Big)_{s=0}^{2N}
\end{aligned}
\end{equation*}
but by the definition of $\Phi$ and by using the direct formula $\frac{\partial}{\partial (-\zeta_2)} = -\frac{\partial}{\partial \zeta_2}$ this amounts exactly to
$\left(M_s(\Phi(\psi^+))^\text{vect}(\zeta_1, \zeta_2, \zeta_3)\right)_{s=0}^{2N}$. 
Hence, the identity (\ref{interwtining-property}) holds.
\end{proof}
Observe that
\begin{equation*}
\Sol(\n_+; \sigma_\lambda^{2N+1}, \tau_{\pm p, \nu}) \subset \Hom_\C\left(V^{2N+1}, \Pol(\n_+)\right).
\end{equation*}
Now, if we define
\begin{equation*}
\Phi_\text{Sol}^+ := \Phi\big\rvert_{\Sol(\n_+; \sigma_\lambda^{2N+1}, \tau_{p, \nu})},
\end{equation*}
by Lemma \ref{lemma-duality_m_and_-m}, we have
\begin{equation*}
\Phi_\text{Sol}^+: \Sol(\n_+; \sigma_\lambda^{2N+1}, \tau_{p, \nu}) \rightarrow \Sol(\n_+; \sigma_\lambda^{2N+1}, \tau_{-p, \nu}).
\end{equation*}
Moreover,
\begin{prop}\label{prop-duality_m_and_-m} The map
\begin{equation}\label{duality-isomorphism}
\begin{gathered}
\Phi_{\normalfont{\text{Sol}}}^+: \Sol(\n_+; \sigma_\lambda^{2N+1}, \tau_{p, \nu}) \arrowsimeq \Sol(\n_+; \sigma_\lambda^{2N+1}, \tau_{-p, \nu}),
\end{gathered}
\end{equation}
is a linear isomorphism.
\end{prop}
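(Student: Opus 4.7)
The plan is to exploit the fact that $\Phi$ is an involution on the ambient space $\Hom_\C(V^{2N+1}, \Pol(\n_+))$, so that once we define an analogous map in the reverse direction $\Phi_{\text{Sol}}^-$, these two maps will automatically be mutual inverses. The main work consists in verifying that $\Phi$ preserves both defining conditions of $\Sol(\n_+; \sigma_\lambda^{2N+1}, \tau_{\pm p, \nu})$, namely the $L^\prime$-equivariance \eqref{F-system-1-concrete} and the PDE \eqref{F-system-2-concrete}.

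First, I would verify by direct computation that $\Phi^2 = \id$. Indeed, for $\varphi = (\varphi_s)_{s=0}^{2N}$, applying $\Phi$ twice gives
\begin{equation*}
\Phi(\Phi(\varphi))_s(\zeta_1, \zeta_2, \zeta_3) = (-1)^s (-1)^{2N-s}\varphi_s(\zeta_1, \zeta_2, \zeta_3) = \varphi_s(\zeta_1, \zeta_2, \zeta_3),
\end{equation*}
so $\Phi$ is a linear involution on the ambient space.

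Next, I would define $\Phi_{\text{Sol}}^- := \Phi\big\rvert_{\Sol(\n_+; \sigma_\lambda^{2N+1}, \tau_{-p,\nu})}$ and show that it maps into $\Sol(\n_+; \sigma_\lambda^{2N+1}, \tau_{p,\nu})$. The argument runs completely parallel to that for $\Phi_{\text{Sol}}^+$: for the PDE condition, the intertwining identity \eqref{interwtining-property} of Lemma \ref{lemma-duality_m_and_-m} is symmetric in $p$ and $-p$ (one can either repeat the computation with $p$ replaced by $-p$, or apply $\Phi$ to both sides of \eqref{interwtining-property} and use $\Phi^2=\id$). For the $L^\prime$-equivariance, by Proposition \ref{prop-FirststepFmethod}, any element of $\Hom_{L^\prime}(V^{2N+1}_\lambda, \C_{-p,\nu}\otimes \Pol^a(\n_+))$ is of the form $\psi^-((g_k)_{k=p-N}^{p+N})$, and the analogue of identity \eqref{relation-Phi(psi+)-and-psi-} gives
\begin{equation*}
\Phi(\psi^-((g_k)))(\zeta_1, \zeta_2, \zeta_3) = \psi^+(((-1)^{k-p+N}g_k))(\zeta_1, \zeta_2, \zeta_3),
\end{equation*}
which lies in $\Hom_{L^\prime}(V^{2N+1}_\lambda, \C_{p,\nu}\otimes \Pol^a(\n_+))$. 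Thus $\Phi_{\text{Sol}}^-$ is well-defined.

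Finally, since $\Phi^2 = \id$ holds on the ambient space, the restrictions satisfy $\Phi_{\text{Sol}}^+ \circ \Phi_{\text{Sol}}^- = \id$ and $\Phi_{\text{Sol}}^- \circ \Phi_{\text{Sol}}^+ = \id$, so $\Phi_{\text{Sol}}^+$ is a linear isomorphism with inverse $\Phi_{\text{Sol}}^-$. Most of the technical content has been packaged into Lemma \ref{lemma-duality_m_and_-m} and identity \eqref{relation-Phi(psi+)-and-psi-}, so the only residual obstacle is the bookkeeping needed to confirm that the analogues of these results hold when one exchanges the roles of $\psi^+$ and $\psi^-$; this is essentially a symmetry argument exploiting that $\Phi$ itself provides the required correspondence.
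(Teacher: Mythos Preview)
Your proof is correct and uses essentially the same ingredients as the paper's proof (Lemma \ref{lemma-duality_m_and_-m}, identity \eqref{relation-Phi(psi+)-and-psi-}, Proposition \ref{prop-FirststepFmethod}, and the fact that $\Phi$ is an involution); the only organizational difference is that the paper proves injectivity and surjectivity directly rather than exhibiting the inverse map $\Phi_{\text{Sol}}^-$, but the surjectivity argument there is exactly your check that $\Phi_{\text{Sol}}^-$ lands in the correct $\Sol$ space.
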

\begin{proof}
Since $\Phi$ is an involution on $\Hom_\C\left(V^{2N+1}, \Pol(\n_+)\right)$, $\Phi_\text{Sol}^+$ 
is clearly injective. Now, take $\varphi(\zeta) \in \Sol(\n_+; \sigma_\lambda^{2N+1}, \tau_{-p, \nu})$. Since
\begin{equation*}
\Sol(\n_+; \sigma_\lambda^{2N+1}, \tau_{-p, \nu}) \subset \Hom_{SO(2)}\left(V^{2N+1}, \C_{-p, \nu}\otimes \Pol(\n_+)\right),
\end{equation*}
it follows from Proposition \ref{prop-FirststepFmethod} and from
 (\ref{relation-Phi(psi+)-and-psi-}) that
 there exists 
\begin{equation*}
\psi^+((g_k)_{k=p-N}^{p+N}) \in \Hom_{SO(2)}\left(V^{2N+1}, \C_{p, \nu}\otimes \Pol(\n_+)\right)
\end{equation*} 
such that $\Phi(\psi^+)(\zeta) = \varphi(\zeta)$. Now, by Lemma \ref{lemma-duality_m_and_-m}, we have
\begin{equation*}
\begin{aligned}
\Phi\left(\left(\widehat{d \pi_{(\sigma^{2N+1}, \lambda)^*}}(C_1^+) \otimes \id_{\C_{p,\nu}}\right)\psi^+\right)(\zeta) & = \left(\widehat{d \pi_{(\sigma^{2N+1}, \lambda)^*}}(C_1^+) \otimes \id_{\C_{-p,\nu}}\right)\Phi(\psi^+)(\zeta) \\
& =
\left(\widehat{d \pi_{(\sigma^{2N+1}, \lambda)^*}}(C_1^+) \otimes \id_{\C_{-p,\nu}}\right)\varphi(\zeta) \\
& = 0 \quad (\text{by hypothesis}).
\end{aligned}
\end{equation*}
Since $\Phi$ is injective, this shows that
\begin{equation*}
\left(\widehat{d \pi_{(\sigma^{2N+1}, \lambda)^*}}(C_1^+) \otimes \id_{\C_{p,\nu}}\right)\psi^+(\zeta) = 0,
\end{equation*}
that is, $\psi^+(\zeta) \in \Sol(\n_+; \sigma_\lambda^{2N+1}, \tau_{p, \nu})$. The proposition follows now.
\end{proof}

\begin{rem}
(1) From Proposition \ref{prop-duality_m_and_-m} above, we deduce that the space $\Sol(\n_+, \sigma_\lambda^{2N+1}, \tau_{-N, \nu})$ has dimension at most one and that the necessary and sufficient condition on the parameters $(\lambda, \nu, N)$ for it to be non-zero is the same as in the case $m = N$, namely $\nu - \lambda \in \N$. This establishes Theorem \ref{mainthm1} for $m = -N$.

(2) As for the differential operators $\D_{\lambda, \nu}^{N, -N}$, from the result above, we deduce that the operator $\D_{\lambda, \nu}^{N,-N}$ can be obtained by reordering the scalar operators in each coordinate and replacing $i$ by $-i$ in the expression of the operator $\D_{\lambda, \nu}^{N,N}$. Specifically, from the expression of the map $\Phi_\text{Sol}^+$, $\D_{\lambda, \nu}^{N, -N}$ can be obtained by swapping the $s$-coordinate with $(-1)^s$ times the $(2N-s)$-coordinate and replacing $i$ by $-i$ in the operator $\D_{\lambda, \nu}^{N, N}$. This proves Theorem \ref{mainthm2} for $m = -N$.
\end{rem}

\begin{rem} Note that the results of this section can be applied when solving Problems A and B for $|m| > N$. In fact, it suffices to solve them when $m > N$ and then use Proposition \ref{prop-duality_m_and_-m} to obtain the solution for $m < -N$.
\end{rem}

\section{Appendix: About Gegenbauer Polynomials}\label{section-appendix}
In this section, we define the \emph{renormalized} Gegenbauer polynomials $\Geg_\ell^\mu$ and discuss some of their properties. These Gegenbauer polynomials are just a renormalized version of the well-known classical Gegenbauer polynomials $C_\ell^\mu$, named after the Austrian mathematician Leopold H. Gegenbauer \cite{gegenbauer}. The key property of these polynomials is that they solve the Gegenbauer differential equation.

\subsection{Renormalized Gegenbauer polynomials}
For any $\mu \in \C$ and any $\ell \in \N$, the ultraspherical polynomial (also known as Gegenbauer polynomial), is defined as follows (\cite[Sec. 6.4]{aar}, \cite[Sec. 3.15.1]{emot}):
\begin{equation*}
\begin{aligned}
C_\ell^\mu(z) & := \sum_{k = 0}^{[\frac{\ell}{2}]}(-1)^k \frac{\Gamma(\ell - k + \mu)}{\Gamma(\mu)k!(\ell -  2k)!}(2z)^{\ell - 2k}.
\end{aligned}
\end{equation*}

This polynomial satisfies the Gegenbauer differential equation $G_\ell^\mu f(z) = 0$, where $G_\ell^\mu$ is the Gegenbauer differential operator:
\begin{equation*}
G_\ell^\mu := (1-z^2)\frac{d^2}{dz^2} - (2\mu +1)z\frac{d}{dz} + \ell(\ell +2\mu).
\end{equation*}
Note that $C_\ell^\mu \equiv 0$ if $\mu = 0, -1, -2, \cdots, - [\frac{\ell-1}{2}]$. In order to obtain a non-zero polynomial for any value of the parameters, we renormalize the Gegenbauer polynomial as follows:
\begin{equation}\label{Gegenbauer-polynomial(renormalized)}
\widetilde{C}^\mu_\ell(z) := \frac{\Gamma(\mu)}{\Gamma\left(\mu + [\frac{\ell + 1}{2}]\right)}C^\mu_\ell(z) =\sum_{k=0}^{[\frac{\ell}{2}]}\frac{\Gamma(\mu + \ell - k)}{\Gamma\left(\mu+ [\frac{\ell + 1}{2}]\right)}\frac{(-1)^k }{k! (\ell - 2k)!}(2z)^{\ell - 2k}.
\end{equation}
We write below the first five Gegenbauer polynomials.

\begin{itemize}
\item $\widetilde{C}^\mu_0(z) = 1$.
\item $\widetilde{C}^\mu_1(z) = 2z$.
\item $\widetilde{C}^\mu_2(z) = 2(\mu + 1) z^2 -1$.
\item $\widetilde{C}^\mu_3(z) = \frac{4}{3}(\mu + 2) z^3 -2z$.
\item $\widetilde{C}^\mu_4(z) = \frac{2}{3}(\mu +2)(\mu+3)z^4 -
2(\mu + 2)z^2 + \frac{1}{2}$.
\end{itemize}

Observe that from the definition, all terms of $\Geg_\ell^\mu(z)$ have the same parity, namely, the parity of $\ell$. Moreover, the number of terms is at most $\left[\frac{\ell}{2}\right]+1$. Depending on the value of $\mu$, some terms may vanish, but the term of degree $\ell - 2\left[\frac{\ell}{2}\right] \in \{0,1\}$ is non-zero for any $\mu \in \C$.
By a direct computation one can show that this term is given by:
\begin{equation*}
\begin{aligned}
\frac{(-1)^{\left[\frac{\ell}{2}\right]}}{\left[\frac{\ell}{2}\right]!} (2z)^{\ell - 2\left[\frac{\ell}{2}\right]}.
\end{aligned}
\end{equation*}
Note that if $\mu + \left[\frac{\ell+1}{2}\right] = 0$, then $\Geg_{\ell}^\mu(z)$ is constant for any $\ell \geq 0$. For $\ell < 0$ we just define the renormalized Gegenbauer polynomial as $\Geg_\ell^\mu \equiv 0$. 

The next result shows that the solutions of the Gegenbauer differential equation on $\Pol_\ell[z]_\text{{\normalfont{even}}}$ are precisely the renormalized Gegenbauer polynomials, where
\begin{equation*}
\Pol_\ell[z]_\text{even} = \spanned_\C\{z^{\ell - 2b}: b = 0, 1, \ldots, \left[\frac{\ell}{2}\right]\}.
\end{equation*}
\begin{thm}[{\cite[Thm 11.4]{kob-pev2}}]\label{thm-Gegenbauer-solutions} For any $\mu \in \C$, $\ell \in \N$ the following holds.
\begin{equation*}
\{f(z) \in \Pol_\ell[z]_\text{{\normalfont{even}}} : G_\ell^\mu f(z) = 0\} = \C \widetilde{C}_\ell^\mu(z).
\end{equation*}
\end{thm}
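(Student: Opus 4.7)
The plan is to reduce the problem to an elementary recurrence on the coefficients of $f$. Writing $f(z) = \sum_{b=0}^{[\ell/2]} a_b z^{\ell-2b} \in \Pol_\ell[z]_\text{{\normalfont{even}}}$, a direct computation --- simplifying $\ell(\ell+2\mu) - (\ell-2b)(\ell-2b-1) - (2\mu+1)(\ell-2b) = 4b(\ell-b+\mu)$ --- gives
\begin{equation*}
G_\ell^\mu(z^{\ell-2b}) = 4b(\ell - b + \mu)\, z^{\ell-2b} + (\ell-2b)(\ell-2b-1)\, z^{\ell-2b-2}.
\end{equation*}
In particular the coefficient of $z^\ell$ in $G_\ell^\mu f$ vanishes identically (the $b=0$ prefactor is $0$), so $G_\ell^\mu$ sends the source, of dimension $[\ell/2]+1$, into the subspace of polynomials of degree at most $\ell-2$ with matching parity, which has dimension $[\ell/2]$. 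Rank-nullity already forces $\dim \ker G_\ell^\mu \geq 1$.

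Next I would equate the coefficient of $z^{\ell-2b}$ in $G_\ell^\mu f$ to zero for $b = 1, \ldots, [\ell/2]$, obtaining the explicit recurrence
\begin{equation*}
4b(\ell-b+\mu)\, a_b \;=\; -(\ell-2b+2)(\ell-2b+1)\, a_{b-1}.
\end{equation*}
When $\ell-b+\mu \neq 0$ for every $b = 1, \ldots, [\ell/2]$, this determines $a_1, \ldots, a_{[\ell/2]}$ from $a_0$ alone, so the kernel is exactly one-dimensional. The main obstacle is the degenerate case $\ell - b_0 + \mu = 0$ for some $b_0 \in \{1, \ldots, [\ell/2]\}$: there the equation at $b = b_0$ degenerates to $(\ell-2b_0+2)(\ell-2b_0+1)\, a_{b_0-1} = 0$, and one has to check that $(\ell-2b_0+2)(\ell-2b_0+1) \neq 0$, which follows immediately from the integer bound $b_0 \leq [\ell/2]$. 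This yields $a_{b_0-1} = 0$; running the recurrence backward forces $a_0 = \cdots = a_{b_0-1} = 0$, while the newly freed parameter $a_{b_0}$ determines the remaining coefficients via the recurrence forward. Either way $\dim \ker G_\ell^\mu \leq 1$.

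To conclude, I would verify that the coefficients of $\Geg_\ell^\mu(z)$ read from \eqref{Gegenbauer-polynomial(renormalized)} satisfy this very recurrence --- a one-line computation using the functional equation $\Gamma(x+1)=x\Gamma(x)$. The excerpt already records that the lowest-degree term $\frac{(-1)^{[\ell/2]}}{[\ell/2]!}(2z)^{\ell-2[\ell/2]}$ of $\Geg_\ell^\mu$ is non-zero for every $\mu \in \C$, so $\Geg_\ell^\mu \not\equiv 0$ and must span the one-dimensional kernel. The only delicate point is precisely the degenerate family $\mu \in \{b-\ell : 1 \leq b \leq [\ell/2]\}$, where the renormalization factor $\Gamma(\mu)/\Gamma(\mu+[(\ell+1)/2])$ in the definition of $\Geg_\ell^\mu$ is what keeps the correct generator from collapsing to zero.
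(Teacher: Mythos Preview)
Your proof is correct. The paper does not supply its own proof of this statement: it is quoted from \cite[Thm.~11.4]{kob-pev2} and used as a black box in the appendix. Your recurrence argument is the standard elementary verification; the only subtlety is the degenerate case $\mu = b_0 - \ell$, and you handle it correctly by observing that $(\ell-2b_0+2)(\ell-2b_0+1)\neq 0$ for $1\le b_0\le[\ell/2]$, forcing $a_0=\cdots=a_{b_0-1}=0$, and that there can be at most one such $b_0$ (since $b_0=\ell+\mu$ is uniquely determined), so the forward recurrence from the free parameter $a_{b_0}$ never hits another zero. The nonvanishing of $\Geg_\ell^\mu$ is exactly the point the paper records just before stating the theorem, so your appeal to it is legitimate.
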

In $\C[z] \setminus \Pol_\ell[z]_\text{{\normalfont{even}}}$, the Gegenbauer differential equation may have other solutions depending on the parameters $\mu, \ell$. For a concrete statement of this fact see \cite[Thm. 11.4]{kob-pev2}.

For any $\mu \in \C$ and any $\ell \in \N$, the \textit{imaginary} Gegenbauer differential operator $S_\ell^\mu$ is defined as follows:
\begin{equation}\label{Gegen-imaginary}
S_\ell^\mu = - (1 + t^2) \frac{d^2}{dt^2} - (1 + 2\mu)t\frac{d}{dt} + \ell(\ell + 2\mu).
\end{equation}
This operator and $G_\ell^\mu$ are in the following natural relation.

\begin{lemma}[{\cite[Lem. 14.2]{kkp}}]\label{lemma-relationS-G} Let $f \in \C[z]$ define $g(t) = f(z)$, where $z = it$. Then,
\begin{equation*}
\left(S_\ell^\mu g\right)(t) = \left(G_\ell^\mu f\right)(z).
\end{equation*}
\end{lemma}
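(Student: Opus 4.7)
The plan is a direct change-of-variables computation via the chain rule. First I would establish how the derivative operators transform under the substitution $z = it$: from $dz/dt = i$ the chain rule gives
\begin{equation*}
\frac{dg}{dt} = i f'(z), \qquad \frac{d^2 g}{dt^2} = -f''(z),
\end{equation*}
while from $t = -iz$ we also obtain the algebraic identities $1+t^2 = 1-z^2$ and $it = z$.

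Next I would substitute these identities term by term into the definition \eqref{Gegen-imaginary} of $S_\ell^\mu$. The leading term $-(1+t^2)\frac{d^2 g}{dt^2}$ becomes $-(1-z^2)(-f''(z)) = (1-z^2) f''(z)$, with the two minus signs cancelling. The first-order term $-(1+2\mu)\,t\,\frac{dg}{dt}$ becomes $-(1+2\mu)(-iz)(i f'(z)) = -(2\mu+1)\, z f'(z)$, since the product $(-i)(i) = 1$. The zeroth-order term is manifestly $\ell(\ell+2\mu) f(z)$. Summing these three contributions reproduces exactly the definition of $G_\ell^\mu f(z)$.

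There is essentially no real obstacle: the statement is a one-line chain-rule verification. The only thing to check is that the specific sign choices in the definition of $S_\ell^\mu$, namely the overall minus signs in $-(1+t^2)$ and $-(1+2\mu)t$, are precisely engineered so that they compensate for the sign flips coming from $d^2/dt^2 \leftrightarrow -d^2/dz^2$ and $1+t^2 \leftrightarrow 1-z^2$, while the factor $i^2$ introduced by the first-order chain rule combines with $t = -iz$ to give exactly $z$. No assumption on $f$ beyond polynomiality is needed, and in fact the identity holds for any smooth $f$.
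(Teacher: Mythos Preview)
Your proof is correct; this is the natural direct chain-rule verification, and the paper does not supply its own argument (it simply cites \cite[Lem.~14.2]{kkp}), so there is nothing further to compare.
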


\subsection{Algebraic properties of renormalized Gegenbauer polynomials and imaginary Gegenbauer operators}
In this section we collect some useful properties of (imaginary) Gegenbauer operators and (renormalized) Gegenbauer polynomials. We start by showing some algebraic identities involving derivatives and follow by recalling one three-term relation among Gegenbauer polynomials.

Given $\mu \in \C$ and $\ell \in \N$, we define the following gamma factor:
\begin{equation}\label{gamma-def}
\gamma(\mu, \ell) := \displaystyle{\frac{\Gamma(\mu + \left[\frac{\ell+2}{2}\right])}{\Gamma(\mu + \left[\frac{\ell + 1}{2}\right])} = \begin{cases}
1,&\text{if } \ell \text{ is odd},\\
\mu + \frac{\ell}{2},&\text{if } \ell \text{ is even}.
\end{cases}}
\end{equation}
Note that this definition can be extended to $\ell \in \Z$. For $\ell < 0$ it suffices to take $k \in \N$ such that $\ell + 2k \in \N$ and define $\gamma(\mu, \ell) := \gamma(\mu-k,\ell + 2k)$. 

Another remarkable property of $\gamma(\mu, \ell)$ is that it satisfies the following identity, which is straightforward from the definition:
\begin{equation}\label{gamma-product-property}
\gamma(\mu, \ell)\gamma(\mu, \ell+1) = \mu + \left[\frac{\ell +1}{2}\right].
\end{equation}

During the rest of the section, we denote by $\vartheta_t$ the one-dimensional Euler operator $t\frac{d}{dt}$.

\begin{lemma}[{\cite[Lem. 14.4]{kkp}}] For any $\mu \in \C$ and any $\ell \in \N$, the following identities hold:
\begin{align}
\label{derivative-Gegenbauer-1}
\frac{d}{dt}\Geg_\ell^\mu(it) &= 2i\gamma(\mu, \ell)\Geg_{\ell -1}^{\mu + 1}(it),\\[4pt]
\label{derivative-Gegenbauer-2}
\left(\vartheta_t-\ell\right)\widetilde{C}_\ell^\mu(it) &= \Geg_{\ell -2}^{\mu + 1}(it).
\end{align}
\end{lemma}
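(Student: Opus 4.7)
The plan is to prove both identities by a direct term-by-term computation starting from the explicit power series (\ref{Gegenbauer-polynomial(renormalized)}). Writing
$$\widetilde{C}_\ell^\mu(it) = \sum_{k=0}^{[\ell/2]} \frac{\Gamma(\mu+\ell-k)}{\Gamma(\mu+[(\ell+1)/2])} \frac{(-1)^k}{k!(\ell-2k)!} (2it)^{\ell-2k},$$
I would apply each of the two differential operators monomial by monomial, reindex, and identify the resulting series with the expansion of the Gegenbauer polynomial on the right-hand side. The normalization factor $\gamma(\mu,\ell)$ emerges naturally from the mismatch between the gamma prefactors of $\widetilde{C}_\ell^\mu$ and $\widetilde{C}_{\ell-1}^{\mu+1}$ (or $\widetilde{C}_{\ell-2}^{\mu+1}$).

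For \eqref{derivative-Gegenbauer-1}, differentiating with respect to $t$ multiplies the $k$-th term by $(\ell-2k)(2i)$ and shifts the exponent from $\ell-2k$ to $(\ell-1)-2k$; the potential top term $k=\ell/2$ drops out when $\ell$ is even. Comparing with the series of $\widetilde{C}_{\ell-1}^{\mu+1}(it)$, the $k$-dependent parts $\Gamma(\mu+\ell-k)/(k!(\ell-1-2k)!)$ match perfectly, and one reads off the scalar discrepancy
$$\frac{\Gamma(\mu + 1 + [\ell/2])}{\Gamma(\mu + [(\ell+1)/2])} = \gamma(\mu,\ell),$$
where I use the elementary identity $1+[\ell/2] = [(\ell+2)/2]$ together with \eqref{gamma-def}. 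Multiplying by the extra factor $2i$ from the differentiation yields the claim.

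For \eqref{derivative-Gegenbauer-2}, I would first note that, under $z=it$, the Euler operator is invariant: $\vartheta_t = t\frac{d}{dt} = z\frac{d}{dz} = \vartheta_z$, so it suffices to prove the equivalent identity in the $z$-variable. Then $(\vartheta_z - \ell)(2z)^{\ell-2k} = -2k\,(2z)^{\ell-2k}$, so the $k=0$ term vanishes; reindexing $k\mapsto k+1$ produces a series in $(2z)^{(\ell-2)-2k}$ whose general term is to be matched against the expansion of $\widetilde{C}_{\ell-2}^{\mu+1}(z)$. Here the comparison of the remaining $\Gamma$-quotient is the same gamma-shift as above, and the computation collapses to the stated equality.

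The main obstacle is purely combinatorial bookkeeping, namely the case distinction between $\ell$ even and $\ell$ odd hidden in the floor functions $[(\ell+1)/2]$. I would avoid a split into two cases by packaging it once and for all in the definition \eqref{gamma-def} of $\gamma(\mu,\ell)$, using that $[(\ell+1)/2] - [\ell/2] \in \{0,1\}$ uniformly. An alternative route, which I would keep as a back-up, is to invoke the classical identity $\frac{d}{dz}C_\ell^\mu(z) = 2\mu\,C_{\ell-1}^{\mu+1}(z)$ and its companion for $(\vartheta_z-\ell)C_\ell^\mu$, then pull through the renormalization factor $\Gamma(\mu)/\Gamma(\mu+[(\ell+1)/2])$; this reduces the proof to a one-line manipulation of gamma functions.
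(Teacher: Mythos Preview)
The paper does not prove this lemma; it merely quotes it from \cite[Lem.~14.4]{kkp}. Your term-by-term approach via the explicit series \eqref{Gegenbauer-polynomial(renormalized)} is exactly how such identities are verified, and your argument for \eqref{derivative-Gegenbauer-1} is correct as written.

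For \eqref{derivative-Gegenbauer-2}, however, your final sentence (``the comparison of the remaining $\Gamma$-quotient is the same gamma-shift as above, and the computation collapses to the stated equality'') glosses over the endgame and is in fact wrong. After applying $(\vartheta_t-\ell)$ and reindexing $k\mapsto k+1$, the factor $-2k\cdot(-1)^k/k!$ becomes $2\cdot(-1)^{k-1+1}/(k-1)! = 2\cdot(-1)^k/(k-1)!$, so an extra factor of $2$ survives; meanwhile the $\Gamma$-prefactors match exactly (since $1+[(\ell-1)/2]=[(\ell+1)/2]$), so there is \emph{no} residual $\gamma(\mu,\ell)$ this time. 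The honest computation therefore yields
\[
(\vartheta_t-\ell)\,\widetilde C_\ell^\mu(it)=2\,\widetilde C_{\ell-2}^{\mu+1}(it),
\]
not the identity as printed here. A quick sanity check with $\ell=2$ confirms this: $(\vartheta_t-2)\bigl(2(\mu+1)(it)^2-1\bigr)=2$, while $\widetilde C_0^{\mu+1}=1$. The version with the factor $2$ is the one actually used throughout Section~\ref{section-proof_of_solving_equations} (see e.g.\ the line $(\vartheta_t-a)f_N=2q_N^+\widetilde C_{a-2}^{\lambda+N}(it)$ in the proof of Lemma~\ref{lemma-expressions-fj}), so the discrepancy is a typo in the appendix statement, not a flaw in your method. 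Just carry the computation through rather than asserting it collapses, and you will land on the correct formula.
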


\begin{lemma}[{\cite[Lem. 14.6]{kkp}}]\label{lemma-KKP-identities}  For any $\mu \in \C$ and any $\ell \in \N$ the following three-term relation holds:
\begin{align}
\label{KKP-1}
&(\mu + \ell)\Geg_\ell^\mu(it) + \Geg_{\ell-2}^{\mu+1}(it) = \left(\mu + \left[\frac{\ell+1}{2}\right]\right)\Geg_\ell^{\mu+1}(it).
\end{align}
\end{lemma}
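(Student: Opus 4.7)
The plan is to prove the three-term relation \eqref{KKP-1} by direct coefficient comparison using the explicit series formula \eqref{Gegenbauer-polynomial(renormalized)}. Since both sides are polynomials in $it$, it suffices to verify the corresponding identity in a single variable $z = it$. I would extract the coefficient of $(2z)^{\ell-2k}$ in each of the three summands for $k = 0, 1, \ldots, [\ell/2]$: the first left-hand term contributes at index $k$; the second left-hand term contributes after reindexing $k \mapsto k-1$ (and nothing when $k=0$); and the right-hand side contributes at index $k$. After pulling out the common factor $\frac{(-1)^k\Gamma(\mu+\ell-k)}{(\ell-2k)!}$ and using $\Gamma(\mu+1+\ell-k) = (\mu+\ell-k)\Gamma(\mu+\ell-k)$ together with $\Gamma(\mu+1+[(\ell+1)/2]) = (\mu+[(\ell+1)/2])\,\Gamma(\mu+[(\ell+1)/2])$ to simplify the right-hand side, the identity reduces to
\[
\frac{\mu+\ell}{\Gamma(\mu+[(\ell+1)/2])\,k!} - \frac{1}{\Gamma(\mu+1+[(\ell-1)/2])\,(k-1)!} = \frac{\mu+\ell-k}{\Gamma(\mu+[(\ell+1)/2])\,k!}.
\]

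The crux is the gamma identity $\Gamma(\mu+[(\ell+1)/2]) = \Gamma(\mu+1+[(\ell-1)/2])$: a case split on the parity of $\ell$ shows that both sides equal $\Gamma(\mu+\ell/2)$ when $\ell$ is even and $\Gamma(\mu+(\ell+1)/2)$ when $\ell$ is odd. With this in hand, the displayed equation collapses to $\frac{\mu+\ell}{k!} - \frac{k}{k!} = \frac{\mu+\ell-k}{k!}$, which is immediate. The low-degree boundary cases $\ell \in \{0,1\}$ (where $\Geg_{\ell-2}^{\mu+1} \equiv 0$ by convention) are checked directly and cause no trouble.

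A slightly more conceptual route would be to first establish the unnormalized identity $(\mu+\ell)\,C_\ell^\mu(z) = \mu\bigl(C_\ell^{\mu+1}(z) - C_{\ell-2}^{\mu+1}(z)\bigr)$ by the same coefficient comparison, and then translate to $\Geg$ via the definition $\Geg_\ell^\mu = \frac{\Gamma(\mu)}{\Gamma(\mu+[(\ell+1)/2])}\,C_\ell^\mu$; the gamma identity above is exactly what makes the factor $\mu$ on the right compatible with the factor $\mu+[(\ell+1)/2]$ appearing in \eqref{KKP-1}. The main obstacle is therefore purely bookkeeping --- careful handling of the floor functions in the renormalization --- and no deep analytic or combinatorial argument is required.
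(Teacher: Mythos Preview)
Your coefficient-comparison argument is correct: the key gamma identity $\Gamma(\mu+[(\ell+1)/2]) = \Gamma(\mu+1+[(\ell-1)/2])$ holds by the parity check you describe, and once this is in place the reduction to $\frac{\mu+\ell}{k!} - \frac{k}{k!} = \frac{\mu+\ell-k}{k!}$ is immediate. The paper itself does not supply a proof of this lemma --- it is simply quoted from \cite[Lem.~14.6]{kkp} as a known three-term relation for renormalized Gegenbauer polynomials --- so your direct verification is a genuine addition rather than a paraphrase.
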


In \eqref{Gegen-imaginary} we have defined the imaginary Gegenbauer operator $S_\ell^\mu$. In the following lemma we give some algebraic properties of this operator.

\begin{lemma} For any $\mu \in \C$ and any $\ell, d \in \N$ the following identities hold:
\begin{align}
\label{Slmu-identity-1}
S_\ell^{\mu} - S_\ell^{\mu+d} &=  2d(\vartheta_t-\ell),\\
\label{Slmu-identity-2}
S_\ell^\mu - S_{\ell -2d}^{\mu + d} &= 2d(\vartheta_t + 2\mu + \ell).
\end{align}
\end{lemma}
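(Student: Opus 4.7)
The plan is to prove both identities by straightforward substitution of the definition
\[
S_\ell^\mu = -(1+t^2)\frac{d^2}{dt^2} - (1+2\mu)t\frac{d}{dt} + \ell(\ell+2\mu)
\]
into the left-hand sides and observing which terms cancel. For \eqref{Slmu-identity-1}, I would first note that $S_\ell^\mu$ depends on $\mu$ only through the coefficient $(1+2\mu)$ of $t\frac{d}{dt}$ and through the constant term $\ell(\ell+2\mu)$, so the second-order part cancels immediately. Writing
\[
S_\ell^\mu - S_\ell^{\mu+d} = \bigl[-(1+2\mu) + (1+2\mu+2d)\bigr]\,t\frac{d}{dt} + \bigl[\ell(\ell+2\mu) - \ell(\ell+2\mu+2d)\bigr],
\]
the first bracket gives $2d\vartheta_t$ and the second gives $-2d\ell$, yielding $2d(\vartheta_t-\ell)$.

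For \eqref{Slmu-identity-2} the computation is only marginally more involved: again the $-(1+t^2)\frac{d^2}{dt^2}$ pieces cancel since they do not depend on $\ell$ or $\mu$. The first-order part contributes $\bigl[-(1+2\mu) + (1+2(\mu+d))\bigr]t\frac{d}{dt} = 2d\,\vartheta_t$, exactly as before. For the constant part I would expand
\[
\ell(\ell+2\mu) - (\ell-2d)\bigl((\ell-2d)+2(\mu+d)\bigr) = \ell(\ell+2\mu) - (\ell-2d)(\ell+2\mu) = 2d(\ell+2\mu).
\]
Combining gives $2d(\vartheta_t + 2\mu + \ell)$, as desired.

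Since the proof is a direct manipulation of the explicit second-order ordinary differential operator, there is essentially no obstacle; the only care needed is bookkeeping of the shifts $\mu \mapsto \mu+d$ and $\ell \mapsto \ell-2d$ in the constant term $\ell(\ell+2\mu)$, where the simultaneous shift in \eqref{Slmu-identity-2} conspires to kill the $d^2$ cross-terms. Both identities could alternatively be verified by testing on the monomial basis $\{t^k\}$ using $\vartheta_t t^k = k t^k$, but the direct algebraic approach above is more efficient and makes manifest that the identities hold as operator equalities on $\C[t]$.
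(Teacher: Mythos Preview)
Your proof is correct and is exactly the approach the paper takes: the paper's own proof simply reads ``The proof is straightforward from the definition of $S_\ell^\mu$,'' and your explicit expansion is precisely that straightforward computation carried out in full.
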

\begin{proof}
The proof is straightforward from the definition of $S_\ell^\mu$.
\end{proof}
We remark that identities \eqref{Slmu-identity-1} and \eqref{Slmu-identity-2} above are a simple generalization of 
\cite[Eq. (14.9)]{kkp} and \cite[Eq. (14.12)]{kkp} respectively.

\begin{lemma}
Let $\mu \in \C$ and $\ell \in \N$ and suppose that $2\mu + \ell = 0$. Then, for any $n, d \in \N$ we have:
\begin{equation}\label{Slmu-identity-concrete}
S_{\ell+n}^{\mu-n} t^{n-2d} = -2d(\ell+2d)t^{n-2d} - (n-2d)(n-2d-1)t^{n-2d-2}.
\end{equation}
\end{lemma}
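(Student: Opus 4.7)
The statement is a direct computation: one simply applies the second-order differential operator $S_{\ell+n}^{\mu-n}$ to the monomial $t^{n-2d}$ and collects coefficients, exploiting the hypothesis $2\mu+\ell=0$ to kill the would-be constant term $(\ell+n)(\ell+n+2(\mu-n))$ arising from the zeroth-order part of $S_{\ell+n}^{\mu-n}$.

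The plan is as follows. First, compute the two derivatives
\[
\frac{d}{dt}t^{n-2d}=(n-2d)t^{n-2d-1},\qquad \frac{d^{2}}{dt^{2}}t^{n-2d}=(n-2d)(n-2d-1)t^{n-2d-2},
\]
so that the term $-(1+t^{2})\frac{d^{2}}{dt^{2}}t^{n-2d}$ contributes $-(n-2d)(n-2d-1)t^{n-2d-2}-(n-2d)(n-2d-1)t^{n-2d}$, the term $-(1+2(\mu-n))\,t\frac{d}{dt}t^{n-2d}$ contributes $-(1+2\mu-2n)(n-2d)t^{n-2d}$, and the zeroth-order term contributes $(\ell+n)(\ell+n+2(\mu-n))t^{n-2d}=(\ell+n)(2\mu+\ell-n)t^{n-2d}$.

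The second step uses the hypothesis: since $2\mu+\ell=0$, we have $2\mu+\ell-n=-n$, so the zeroth-order contribution simplifies to $-n(\ell+n)t^{n-2d}$, and also $1+2\mu-2n=1-\ell-2n$. Thus the coefficient of $t^{n-2d-2}$ is already $-(n-2d)(n-2d-1)$ as required, while the coefficient of $t^{n-2d}$ equals
\[
-(n-2d)(n-2d-1)\;-\;(1-\ell-2n)(n-2d)\;-\;n(\ell+n).
\]
The main (and only) obstacle is to check that this expression simplifies to $-2d(\ell+2d)$. Expanding and grouping gives
\[
-(n-2d)^{2}+\ell(n-2d-n)+2n(n-2d)-n^{2}=-(n-2d)^{2}-2d\ell+n^{2}-4nd=-4d^{2}-2d\ell,
\]
which is exactly $-2d(\ell+2d)$. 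Assembling the two terms yields \eqref{Slmu-identity-concrete}, completing the proof. No further machinery (three-term relations, Gegenbauer identities, etc.) is required.
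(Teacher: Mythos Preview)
Your proof is correct and follows exactly the approach indicated by the paper, which simply states that the identity is ``straightforward from the definition of $S^\mu_\ell$ and the condition $2\mu+\ell = 0$.'' You have merely supplied the elementary algebraic details that the paper omits.
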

\begin{proof}
The proof is straightforward from the definition of $S^\mu_\ell$ and the condition $2\mu+\ell = 0$.
\end{proof}

\section*{Acknowledgements}
I would like to express my deepest gratitude to Professors Ali Baklouti and Hideyuki Ishi for the extraordinary organization, and the warm welcome extended to me during the 7th Tunisian-Japanese Conference that occurred in Monastir (Tunisia) in November 2023. Your efforts made the experience truly memorable.

The author was supported by the Japan Society for the Promotion of Science (JSPS) as an International Research Fellow (ID. No. P24018).

\renewcommand{\refname}{References}
\addcontentsline{toc}{section}{References}

\begin{flushleft}
V. Pérez-Valdés, JSPS International Research Fellow, Ryukoku University, Tsukamoto-cho 67, Fukakusa, Fushimi-ku, Kyoto 612-8577, Japan.\\
Email address: \textbf{perez-valdes@mail.ryukoku.ac.jp}
\end{flushleft}
\end{document}